\documentclass[12pt]{article}
\paperheight=11in
\paperwidth=8.5in

\usepackage{amsmath,amsthm,verbatim,amssymb,amsfonts,amscd,diagrams, graphicx, mathrsfs, mathtools}
\usepackage[usenames,dvipsnames]{color}
\usepackage{amstext}
\usepackage{color}
\usepackage{hyperref}

\topmargin0.0cm
\headheight0.0cm
\headsep0.0cm
\oddsidemargin0.0cm
\textheight23.0cm
\textwidth16.5cm
\footskip1.0cm

\setcounter{tocdepth}{3}

\theoremstyle{plain}
\newtheorem{theorem}{Theorem}[section]
\newtheorem{corollary}[theorem]{Corollary}
\newtheorem{lemma}[theorem]{Lemma}

\newtheorem{proposition}[theorem]{Proposition}

\theoremstyle{definition}
\newtheorem{definition}[theorem]{Definition}

\theoremstyle{remark}
\newtheorem{remark}[theorem]{Remark}

\newcommand{\codim}{\textup{codim}}
\newarrow{ul}---->
\newarrow{Backwards}<----

\newcommand{\xr}{\xrightarrow}

\newcommand{\Z}{\mathbb{Z}}

\newcommand{\R}{\mathbb{R}}

\newcommand{\D}{\mathbb{D}}

\newcommand{\bd}{\partial}
\newcommand{\pf}{\pitchfork}

\newcommand{\mc}[1]{\mathcal{#1}}

\newcommand{\dlim}{\varinjlim}

\newcommand{\mf}{\mathfrak}

\newarrow{Onto}----{>>}
\newarrow{Equals}=====

\begin{document}
\title{The chain-level intersection product for PL pseudomanifolds revisited}
\author{Greg Friedman\thanks{
This  work partially supported by the National Science Foundation under Grant Number (DMS-1308306) to Greg Friedman. }\\
Texas Christian University\\
Fort Worth, TX\\
greg.b.friedman@gmail.com
}

\date{January 2, 2018}

\maketitle

\begin{abstract}

We generalize the PL intersection product for chains on PL manifolds and for intersection chains on PL stratified pseudomanifolds
to products  of locally finite chains on non-compact spaces that are natural with respect to restriction to open sets. This is necessary to sheafify the intersection product, an essential step in proving duality between the Goresky-MacPherson intersection homology product and the intersection cohomology cup product pairing recently defined by the author and McClure. 

We also provide a correction to the Goresky-MacPherson proof of a version of Poincar\'e duality on pseudomanifolds that is used in the construction of the intersection product. 

\end{abstract}

\medskip

\textbf{2010 Mathematics Subject Classification:} Primary: 55N45, 55M05, 55N33  ; Secondary: 57N80,  57Q65

\textbf{Keywords}: PL chains, Borel-Moore chains, intersection pairing, pseudomanifold, intersection homology


\section{Introduction}
The purpose of this paper it to construct an intersection product for not-necessarily compact PL intersection chains on not-necessarily compact PL stratified pseudomanifolds that is natural with respect to restriction to open sets\footnote{\emph{Intersection chains} are the chains whose homology gives the Goresky-MacPherson \emph{intersection homology} theory; see Section  \ref{S: IC} for details. The reader should take care to distinguish between intersection chains, intersection \emph{of} chains, and intersection of intersection chains.}. Such a construction is required to sheafify the intersection product, which will be used in \cite{GBF30} to prove duality between the intersection cohomology cup product and the Goresky-MacPherson intersection homology intersection product. To explain more fully, we begin with some background. 

\paragraph{Intersection products.}
The investigation of duality pairings on manifolds using intersection of geometric objects in general position dates back to Poincar\'e \cite{Poin1895} and Lefschetz \cite{Lef, LefBook}. While this point of view was largely supplanted by the development of cohomology and the cup product, intersection products nonetheless remained relevant. As Dold observes, they are ``closer to geometric intuition and therefore possess considerable heuristic value; they often indicate how to turn an intuitive geometric result $\ldots$ into a rigorous one. $\ldots$ Intersection-products can also serve to \emph{compute} $\smile$-products in manifolds'' \cite[Section VIII.13.26]{Dold}.

The importance of intersection products was elevated considerably in the 1980s when Goresky and MacPherson  \cite{GM1}
extended Poincar\'e duality to piecewise linear (PL) stratified pseudomanifolds (Definition \ref{D: pseudomanifold}), a class of spaces more general than PL manifolds and including, for example, irreducible singular varieties. They did this by constructing a new  intersection product for a version of chains called \emph{intersection chains}. The resulting homology theory is called \emph{intersection homology}, and Poincar\'e duality on pseudomanifolds was first expressed in terms of nonsingular pairings on certain intersection homology groups. When the space is in fact a manifold, the Goresky-MacPherson construction provided a new chain-level definition for the classical intersection product.
By using the cup product as part of their definition of the intersection product and by working with PL chains, they avoided many of the complexities and  technical difficulties in Lefschetz's constructions (cf.\ \cite[Section 1.II.4.D]{Dieudonne}). For example, PL chains live in the direct limit of the usual simplicial chain complexes under subdivision, and consequently they are determined by the PL structure of the space but not any specific triangulation choices, making them more natural geometric objects and simplifying general position requirements.

On pseudomanifolds, the Goresky-MacPherson intersection product is defined on certain pairs of PL chains
in \emph{stratified general position}, meaning that general position holds within each manifold stratum of the space. 
So the the product is not given by a chain map, but it nonetheless induces pairings on homology. On a compact oriented PL $n$-manifold $M$, for example, we obtain products
\begin{equation}\label{E: GM hom}
H_i(M)\otimes H_j(M)\to H_{i+j-n}(M)
\end{equation}
 by intersecting representative cycles in general position \cite[Section 2]{GM1}, and similarly one can generalize to products  of certain intersection homology groups on pseudomanifolds. In the manifold case, the product \eqref{E: GM hom} is Poincar\'e dual to the cup product in cohomology, as we shall see (Corollary \ref{C: all agree}). 

\paragraph{Relationships with other products.}
This duality between the Goresky-MacPherson intersection product and the cup product on compact manifolds is not surprising, as it is well known that Poincar\'e duality on manifolds manifests in many forms. Depending on the setting, this can include nonsingular pairings in homology or cohomology induced by constructions on chains (intersection products), cochains (cup products), differential forms (wedge products), or complexes of sheaves. Versions of all of these pairings also now exist in the setting of pseudomanifolds utilizing intersection homology and cohomology \cite{GM1, GM2, BHS, GBF25}. However, these various pairings are not \emph{a priori} isomorphic in either context, and so it is necessary to develop methods of comparison.
Such compatibilities are often taken for granted, but they are not always straightforward, especially those involving intersection products. Even in the manifold case, it can be hard to come by detailed proofs of the isomorphism between the cohomology cup product and the homology intersection product as  induced by a chain-level pairing, such as Goresky and MacPherson's. For example, 
in the 1970s Dold \cite[Section VIII.13]{Dold} gave a product on manifolds, defined only at the level of homology, that is essentially built to be Poincar\'e dual to the cup product, but it takes some work to see that this product is induced by some chain-level construction. In fact, it turns out to be the same homology product as that determined by the Goresky-MacPherson chain-level product \eqref{E: GM hom} (see Corollary \ref{C: all agree}).

So  the relationship between cup and intersection products on compact manifolds really does end up as expected, and this can be shown using only the tools of classical algebraic topology (see Section \ref{S: cup dual}). By contrast, the case of intersection homology and cohomology on pseudomanifolds proves less straightforward. Following the original work of Goresky and MacPherson on intersection products \cite{GM1}, many of the standard tools of algebraic topology have since been extended to intersection homology and cohomology. Very recently this includes cup and cap products and a Poincar\'e duality theorem given by cap product with a fundamental class \cite{GBF25, GBF35}. However, 
as we will see in Section \ref{S: open}, there does not appear to be a simple way to demonstrate duality between  the cup and intersection products by direct classical means (e.g.\ without sheaf theory). Yet the existence of such an isomorphism has important applications. For example, while cup products are useful theoretical tools, the intersection cohomology cup product does not seem to admit combinatorial computation as does the ordinary simplicial cup product \cite[Introduction to Chapter 7]{GBF35}, and so the intersection product remains a key computational tool in intersection homology in the spirit of Dold's quote above. For a recent interesting example see \cite{Sch15a,Sch15b}. 

Without a proof by classical techniques, it is necessary to utilize other tools to prove that the intersection homology intersection product is compatible with other products on pseudomanifolds.  
The most convenient \emph{lingua franca} for such comparisons seems to be the derived category of sheaves, which possesses powerful axiomatic tools for comparing maps \cite[Section V.9]{Bo}, and we pursue this approach in \cite{GBF30}. Products  to be considered in \cite{GBF30}  include the intersection product discussed here, the Goresky-MacPherson sheaf product of \cite{GM2}, the  cup product of \cite{GBF25}, and the wedge product of intersection differential forms of Brasselet-Hector-Saralegi \cite{BHS} and Saralegi \cite{S1, Sa05}. On manifolds these products reduce to the usual  homology/cohomology pairings, and so we also recover many classical equivalences as special cases.

\paragraph{Sheafification of the intersection product.}
To carry out the program of \cite{GBF30}, it is necessary to have a version of the chain-level intersection product that can be ``sheafified.'' 
While sheafifying cup products of cochains and wedge products of differential forms is straightforward due to their contravariant functoriality, sheafifying the intersection product is not. In particular, it requires a version of the intersection product given by chain maps and behaving contravariantly with respect to the inclusion of open subsets. We thus need a chain-level intersection product with the following properties\footnote{Another well-known way to sheafify complexes of chains on a space $X$, including complexes of singular chains, is via functors  $U\to C_*(X,X-\bar U)$ for open $U\subset X$. However, in the PL setting the sheafification discussed here is more standard \cite{GM2,Bo} and has several useful sheaf theoretic properties; in particular we obtain a complex of \emph{soft} sheaves \cite[Section II.5]{Bo}.}:

\begin{enumerate}
\item The product should be defined on \emph{not necessarily compact} PL stratified pseudomanifolds.

\item The product should accept as inputs \emph{not necessarily compact} PL chains and PL intersection chains
 (sometimes called locally finite or Borel-Moore PL chains) in appropriate general position.

\item The product should be natural with respect to restriction to open subsets. For example, if $U\subset X$ is an open subset,  $\xi$ and $\eta$ are appropriate chains, and $\pf$ denotes the intersection product, then $(\xi\pf\eta)|_U=\xi|_U\pf\eta|_U$ in the appropriate chain complex on $U$. 

\item The product should be formulated in terms of chains maps, not just on pairs of chains in general position.
\end{enumerate}

We construct such a product, and our main conclusion is the following theorem. Perversities $\bar p$ and intersection chains $I^{\bar p}C^\infty_*(X)$ will be reviewed in Section \ref{S: IC}. The $\infty$ decorations denote locally finite chains, and the maps $\mu$ are induced by intersections of chains. Other expressions in the theorem will be explained elsewhere below. The theorem statement itself follows directly by assembling Definitions \ref{D: int pairing} and \ref{D: GP} and Propositions \ref{P: qi}, \ref{P: res}, \ref{P: iqi}, \ref{P: G}, \ref{P: ires}, and \ref{P: compact}.

\begin{theorem}\label{T: main}
Let $X$ be an oriented PL stratified pseudomanifold of dimension $n$ with singular locus $\Sigma$, and let $P=(\bar p_1,\bar p_2)$ be a pair of perversities on $X$. Then there exist well-defined  chain maps
\begin{align}
C_*^\infty(X,\Sigma)\otimes C_*^\infty(X,\Sigma)&\hookleftarrow\mf G^\infty_*(X,\Sigma)\xr{\mu} C_{*-n}^\infty(X,\Sigma)\label{E1}\\
I^{\bar p_1}C_*^\infty(X)\otimes I^{\bar p_2}C_*^\infty(X)&\hookleftarrow G^{\infty,P}_*(X)\xr{\mu}I^{\bar p_1+\bar p_2}C_{*-n}^\infty(X)\label{E2}
\end{align}
such that

\begin{enumerate}
\item the leftward inclusion maps are quasi-isomorphisms, i.e.\ they induce isomorphisms of all homology groups,
\item all maps are natural with respect to restrictions to open subsets,
\item the maps in \eqref{E2} are restrictions of those in \eqref{E1} to subcomplexes,  and
\item the maps \eqref{E2} induce the Goresky-MacPherson intersection homology product on compact $X$. 
\end{enumerate}
\end{theorem}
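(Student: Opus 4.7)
The statement is designed to be a consolidation, so my plan is to outline what each of the constituent propositions must accomplish and in what order I would establish them.

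First I would define the intersection map $\mu$ on pairs of chains in stratified general position, following the Goresky-MacPherson procedure: given $\xi \pf \eta$, choose a PL triangulation in which both are simplicial, pass to the dual block decomposition, and produce a chain via cap products with the stratum-wise orientation classes. The essential algebraic fact is that $\partial(\xi \pf \eta) = \partial\xi \pf \eta \pm \xi \pf \partial\eta$ whenever all six pieces sit in stratified general position, so $\mu$ is a chain map once we pass to the subcomplex $\mf G^\infty_*(X,\Sigma) \subset C^\infty_*(X,\Sigma)^{\otimes 2}$ consisting of pairs $\xi \otimes \eta$ with $\xi,\eta,\partial\xi,\partial\eta$ mutually in stratified general position. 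I would then define $G^{\infty,P}_*(X)$ as the intersection of $\mf G^\infty_*$ with $I^{\bar p_1}C^\infty_*(X)\otimes I^{\bar p_2}C^\infty_*(X)$ and verify, using the corrected Poincar\'e duality argument advertised in the abstract, that on this subcomplex $\mu$ takes values in $I^{\bar p_1+\bar p_2}C^\infty_{*-n}(X)$ rather than merely in $C^\infty_{*-n}(X,\Sigma)$.

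Next I would attack the two leftward inclusions of \eqref{E1} and \eqref{E2} and show each is a quasi-isomorphism; this is the technical heart of the theorem. For \eqref{E1} the key input is a PL general-position theorem together with an iterated prism construction: any tensor of chains can be pushed into stratified general position through a small PL isotopy that yields an explicit chain homotopy from the identity, upgrading the inclusion $\mf G^\infty_* \hookrightarrow C^\infty \otimes C^\infty$ to a quasi-isomorphism. For \eqref{E2} one needs a perversity-compatible refinement of this procedure, ensuring that both the moved chains and the interpolating prisms remain intersection chains of the prescribed perversities. This is the main obstacle, and it is precisely where the correction to the Goresky-MacPherson duality argument becomes essential, since the naive prism can fail the perversity condition near lower strata. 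Once these quasi-isomorphisms are in hand, naturality under restriction to open subsets (Propositions \ref{P: res} and \ref{P: ires}) drops out by locality: stratified general position is preserved by taking open subsets, and the cap-product formula defining $\mu$ is built from local simplicial data that commutes with restriction.

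Finally, for the compact compatibility (Proposition \ref{P: compact}) I would observe that on compact $X$ the Goresky-MacPherson homology product is defined by lifting classes to cycles, perturbing to stratified general position, and taking the $\pf$-intersection; since $G^{\infty,P}_* \hookrightarrow I^{\bar p_1}C^\infty \otimes I^{\bar p_2}C^\infty$ is a quasi-isomorphism and $\mu$ agrees with the classical intersection on pairs already in general position, the induced map on homology is exactly the Goresky-MacPherson product. In summary, the assembly is straightforward modulo the intersection-chain quasi-isomorphism of Propositions \ref{P: iqi} and \ref{P: G}, which is where I would expect the real work to lie and where the corrected duality argument carries the load.
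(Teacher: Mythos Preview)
Your high-level outline has the right modular structure, but the actual construction of $\mu$ you propose diverges from the paper in a way that matters, and you misidentify where the corrected duality argument is used.

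\textbf{Construction of $\mu$.} You describe $\mu$ as arising from a dual block decomposition and cap products with stratum-wise orientation classes. The paper does something quite different: it factors $\mu=\Delta_!\circ\epsilon$, where $\epsilon$ is the chain cross product into $C_*^{\infty}((X,\Sigma)\times(X,\Sigma))$ and $\Delta_!$ is a diagonal umkehr map. The essential point---and the reason the paper exists---is that $\Delta_!$ is \emph{not} built from a global duality on the possibly non-compact $X$. Instead, for each simplex $\sigma$ not in $\Sigma$, one chooses a compact neighborhood $Z$ and computes an ``intersection coefficient'' $I_\sigma(\xi)$ via the Goresky--MacPherson duality $\D$ applied to the compact pair $(Z\times Z,J_{Z\times Z})$. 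One then proves this is independent of $Z$ and of the triangulation, so the local data patch into a global chain. Your dual-block/cap-product description is a global construction; it is not clear it can be made to work for locally finite chains on non-compact $X$, and even if it could, the naturality under restriction to open subsets would not fall out---whereas it is essentially automatic from the local nature of the paper's $I_\sigma(\xi)$.

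\textbf{Where the corrected duality enters.} You place the corrected Goresky--MacPherson duality in Propositions \ref{P: G} and \ref{P: iqi}. In the paper it is used much earlier, in the very definition of the local umkehr map (Definition \ref{D: int coeff Z}), where the map $\D$ of Proposition \ref{P: GM dual} is what converts homology of $|\xi|$-type pairs into cohomology so that $\Delta^*$ can be applied. Proposition \ref{P: G} is a straightforward dimension count using stratified general position, and Proposition \ref{P: iqi} is proved exactly like Proposition \ref{P: qi}, by perversity-preserving proper isotopies; neither invokes the duality correction.

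\textbf{Definition of the domain.} Your description of $\mf G^\infty_*(X,\Sigma)$ as pairs $\xi\otimes\eta$ in mutual stratified general position is too restrictive. The paper defines it as $\epsilon^{-1}\bigl(C_*^{\Delta,\infty}((X,\Sigma)\times(X,\Sigma))\bigr)$: the condition is imposed on $|\epsilon(\chi)|$ and $|\epsilon(\partial\chi)|$ for a general element $\chi$ of the tensor product, which need not be a simple tensor. This matters for additivity (Proposition \ref{P: additivity}) and for the chain-map property.
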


Limiting the discussion momentarily to compact $n$-manifolds for simplicity, the reader may be surprised that our intersection products are not given by chain maps of the form $C_*(M)\otimes C_*(M)\to C_{*-n}(M)$. However, general position requirements rule out intersection maps of this form because, for example, we can't generally intersect a chain with itself. In fact, there is no hope of producing a chain map of this form (over $\Z$) that is graded commutative (as is the intersection pairing) and produces a homology product isomorphic to the cup product. This is a consequence of the failure of the ``commutative cochain problem'' (see \cite[Section IX.A]{GrMo}). Rather, as an alternative way to construct a homology intersection product  using chain maps, McClure \cite{McC} defined the \emph{domain}  for an arity-$k$ intersection product to be a subcomplex $G^k_*(M)\subset C_*(M)^{\otimes k}$ of chains satisfying a suitable notion of general position and such that the inclusion is a quasi-isomorphism. The chain-level intersection product is then a chain map  $\mu_k: G^k_*(M)\to C_*(M)$. 
 In arity $2$, one thus obtains a homology product as the composition
\begin{equation}\label{E: hom pairing}
H_*(M)\otimes H_*(M)\to H_*(C_*(M)\otimes C_*(M))\xleftarrow{\cong} H_*(G^2_*(M))\xr{\mu_2}H_{*-n}(M).\end{equation}
A similar construction for intersection chains on pseudomanifolds was given in \cite{GBF18}. Incidentally, data of the form $C_*\xleftarrow{q.i.}D_*\to E_*$, with q.i. denoting a quasi-isomorphism, constitutes the data of a morphism in the derived category of chain complexes, and so having our intersection product in this format is perfect for  obtaining an intersection product in the derived category of sheaves in \cite{GBF30}. 

\paragraph{A correction of the Goresky-MacPherson product.}
Another feature of this paper is that we correct a minor error in the Goresky-MacPherson intersection product \cite{GM1}. The issue occurs in \cite[Appendix]{GM1} in generalizing Dold's version of Poincar\'e  duality \cite[Proposition VIII.7.2]{Dold} to pseudomanifolds. We will explain the error in \cite{GM1}, provide a corrected proof of this pseudomanifold duality isomorphism, and explore some of its  properties that we will need. This material can be found in Section \ref{S: dualities}, following a detailed treatment of some properties of the Dold duality isomorphism. Beyond fixing the existing error, we hope that such a detailed discussion will be a useful expository addition to the literature.

\paragraph{Outline.}
Sections \ref{S: background}, \ref{S: chains}, \ref{S: cross}, and \ref{S: IC} of the paper contain review and foundational development of, respectively, PL stratified pseudomanifolds, PL chains, the PL cross product, and PL intersection chains. 
Section \ref{S: Dold} contains our study of Dold duality for manifolds, which we then use in Section \ref{S: GM} to provide our correction to the proof of Goresky-MacPherson duality for pseudomanifolds, which is in turn needed for the construction of the intersection product. 

Our development of the intersection product properly begins in Section \ref{S: pairings}. We construct the intersection product as a chain map on relative chains $\mf G_*^{\infty}(X,\Sigma)\to C_*^{\infty}(X,\Sigma)$, where $C_*^{\infty}(X,\Sigma)$ denotes the complex of locally-finite relative PL chains on $X$, the subspace $\Sigma$ is the locus of singular points of $X$, and $\mf G_*^{\infty}(X,\Sigma)\subset C_*^{\infty}(X,\Sigma)\otimes C_*^{\infty}(X,\Sigma)$ is the domain complex consisting of chains satisfying the needed stratified general position requirements. In particular, if $X=M$ is a manifold (unstratified), then $\Sigma$ is empty and we obtain an intersection map  $\mf G_*^{\infty}(M)\to C_*^{\infty}(M)$. These products generalize the products of compact chains from \cite{McC, GBF18}.  The 
intersection chain version of the product is derived as a consequence in Section \ref{S: int pairings}, and we show in Proposition \ref{P: compact} that in the compact setting it generalizes our product from \cite{GBF18} and the Goresky-MacPherson intersection product \cite{GM1}.

In the final section, Section \ref{S: cup dual}, we show that the work of this paper can be applied to give a short and direct proof that the Goresky-MacPherson homology product of \cite{GM1}, i.e.\ that induced by the chain-level intersection product,  is Poincar\'e dual to the cohomology cup product  on compact orientable PL manifolds. As we have noted, such a result is certainly not unexpected and it does not even employ the full power of our non-compact machinery, but the only other thorough proof of which the author is aware is that developed in \cite{GBF30} using some heavy sheaf-theoretic machinery. It is therefore quite reasonable to seek a proof using only standard algebraic topology techniques, and that is provided here. 
We also utilize this argument to confirm our above claim that the Goresky-MacPherson intersection product on the homology of manifolds agrees with Dold's homology product from \cite[Section VIII.13]{Dold}.
 Quite surprisingly, however, we will demonstrate that the argument given for manifolds does not seem to extend to a proof of the analogous duality between cup and intersection products for intersection (co)homology on pseudomanifolds, leaving an open question. We conclude that it seems that sheaf-theoretic techniques are, for now, necessary for pseudomanifolds, further motivating the work here for application in \cite{GBF30}.

\paragraph{A note on relations to past work.}
The construction of an intersection product of not-necessarily-compact intersection chains on not-necessarily-compact pseudomanifolds and the resulting map in the derived category of sheaf complexes was sketched in \cite[Remark 4.7]{GBF18}. This construction relied on a pseudomanifold version of a Poincar\'e duality isomorphism between cohomology and locally finite homology on non-compact manifolds provided by Spanier \cite{Sp93}. However, the  generalization to pseudomanifolds was not provided in detail in \cite{GBF18}. In any case, this approach to intersection products remains somewhat unsatisfactory, as Spanier's isomorphisms do not utilize a cap product with a fundamental class but rather a dual approach using Thom classes. Consequently, there is not an obvious direct way to compare the intersection product proposed in \cite{GBF18} with those of Goresky-MacPherson, McClure, and the author on compact spaces \cite{GM1, McC, GBF18}, as we do here. Furthermore, naturality with respect to restriction to open subsets was never formally verified in \cite{GBF18}. 
This paper thus remedies several lacunae from \cite{GBF18}.  

In fact, in our approach here to noncompact intersection products, we do not use at all a global version of Poincar\'e duality for noncompact chains as suggested in \cite{GBF18}. 
Rather, we show that it is possible to ``patch together'' local intersection information obtained on compact subsets. This method has the benefit of making it clear that intersection of chains is completely governed by local information, a fact that is not always transparent in the previous formulations of intersection products, even for compact chains. It is this local nature that makes the compatibility with restrictions to open subsets conceptually obvious, though there are still details to check to provide a complete proof.

\paragraph{A note on conventions.} In \cite{GBF18}, following McClure's  conventions in \cite{McC}, the chain complexes incorporate some degree shifts in order for the intersection product to be a chain map of degree $0$. Additionally, intersection products with more than two input  tensor factors were considered, with the goal of studying partial algebra structures. In the present paper, we  simplify somewhat by restricting to the more traditional two input factors and without shifts, so that our products will be chain maps of degree $-\dim(X)$. The reader should have little difficulty placing the development here back into the context of \cite{GBF18} if desired. We also generalize somewhat from the intersection chains used in  \cite{GBF18}; there, all perversities were assumed to be traditional perversities, i.e.\ those satisfying the original definition of Goresky and MacPherson in \cite{GM1}. Here, we will take advantage of the more  general notion of  perversity, which results in some simplifications. In particular, a perversity will be any function from the set of singular strata to $\Z$; additionally, 
our pseudomanifolds will be allowed codimension one strata. See \cite{GBF26} for a survey or \cite{GBF35} for full details. In this context, it makes sense to think of the PL intersection chain complexes  as subcomplexes not of $C_*(X)$ but of $C_*(X,\Sigma)$, explaining our setting of the intersection product in this context. Hence we will mostly work with relative chain complexes. We note, however, that when $\Sigma=\emptyset$, i.e.\ when $X$ is a PL manifold, then $C_*(X,\Sigma)=C_*(X)$, and we recover a product on absolute chains.

\section{PL stratified pseudomanifolds}\label{S: background}

All work in this paper is done in the piecewise linear (PL) category of topological spaces unless noted otherwise; we  refer the reader to \cite[Section 2.5 and Appendix B]{GBF35} for a review suited to our work here or to  \cite{RS} or \cite{HUD} as more thorough references. 
In this section, we provide a quick review of the essential definitions concerning pseudomanifolds. A review of intersection chains and intersection homology can be found in Section \ref{S: IC}, with \cite{GBF35} providing a thorough reference for all this material.

For a compact PL space $K$, we let $c(K)$ denote its \emph{open cone}; if $vK$ is the usual PL cone \cite[page 2]{RS} with vertex $v$, then $c(K)=(vK)-K$. Note 
that $c(\emptyset)=\{v\}$. If $K$ is a filtered PL space, meaning that $K$ is endowed with a family of closed PL subspaces
$$K=K^n\supset K^{n-1}\supset \cdots \supset 
K^0\supset K^{-1}=\emptyset,$$ 
we define $c(K)$ to be the filtered space with
$(c(K))^i=c(K^{i-1})$ for $i\geq 0$ and $(c(K))^{-1}=\emptyset$.
The definition of stratified pseudomanifold is inductive on the
dimension:

\begin{definition}\label{D: pseudomanifold}
A \emph{$0$-dimensional PL stratified pseudomanifold} $X$ is a  discrete set of points 
with the trivial filtration $X=X^0\supset X^{-1}=\emptyset$.

An $n$-dimensional \emph{PL stratified  pseudomanifold}
$X$ is a PL space filtered by closed PL subsets

\begin{equation*}
X=X^n\supset X^{n-1} \supset X^{n-2}\supset \cdots \supset X^0\supset X^{-1}=\emptyset
\end{equation*}
such that
\begin{enumerate}
\item $X-X^{n-1}$ is dense in $X$, and
\item for each point $x\in X^i-X^{i-1}$ there exist a neighborhood
$U$ of $x$, a  \emph{compact} $n-i-1$ dimensional 
PL stratified    pseudomanifold  $L$, and a PL  homeomorphism
\begin{equation*}
\phi: \R^i\times cL\to U
\end{equation*}
that takes $\R^i\times c(L^{j-1})$ onto $X^{i+j}\cap U$ for all $0\leq j\leq n-i-1$. A neighborhood $U$ with
this property is called \emph{distinguished} and $L$ is called a \emph{link} of
$x$. 
\end{enumerate}
\end{definition}

The $X^i$ are called \emph{skeleta}. As the number of skeleta is finite, there exist triangulations of $X$ with respect to which each $X^i$ is a subcomplex \cite[Lemma 2.5.12]{GBF35}; we will refer to such triangulations as being \emph{compatible with the stratification} and assume that all triangulations satisfy this requirement. In general, we say that a triangulation $T$ is \emph{compatible} with a PL subspace $Y$ if some subcomplex of $T$ triangulates $Y$. Note that we abuse notation by referring to the ``triangulation $T$'' without referring separately to the triangulating simplicial complex and the homeomorphism taking the simplicial complex to the space being triangulated.

We write $X_i$ for $X^i-X^{i-1}$; this 
is a PL $i$-manifold that may be empty. We refer to the connected components of 
the various $X_i$ as  \emph{strata}. 
 If $L$ and $L'$ are links of points in the same stratum then they are PL homeomorphic \cite[Lemma 2.5.18]{GBF35}. If $\dim(X)=n$ then a stratum $\mc Z$ that is a subset of $X_n$ 
is called a \emph{regular stratum}; otherwise it is called a \emph{singular 
stratum}.  Note that codimension $1$ strata are allowed. The union of the singular strata, which is identical to $X^{n-1}$, is often denoted $\Sigma$, or $\Sigma_X$. The stratified pseudomanifold $X$ is considered oriented if $X-\Sigma$ is given an orientation as a PL manifold. 

\section{Dold duality for manifolds and Goresky-MacPherson duality for pseudomanifolds}\label{S: dualities}

\subsection{Dold duality}\label{S: Dold}
In this section we consider Dold's Poincar\'e duality isomorphism of \cite[Proposition VIII.7.2]{Dold} in order to obtain some properties we will need and that are not set out explicitly in \cite{Dold}. According to \cite[Appendix]{GM1}, this isomorphism appears in Whitehead \cite{Wh62} but was first proven by Dold. 
The isomorphism will be used below to construct the Goresky-MacPherson duality isomorphism for pseudomanifolds, which in turn will be used for our intersection product. The results in this section apply for topological manifolds.

The following is Dold's duality isomorphism, slightly modified:

\begin{theorem}[Dold]
If $L\subset K$ are compact subsets of an oriented topological $n$-manifold $M$, then there is an isomorphism $\check H^i(K,L)\to H_{n-i}(M-L,M-K)$ induced by the cap product with a fundamental class  $\Gamma_K\in H_n(M,M-K)$.
\end{theorem}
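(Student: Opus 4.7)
The plan is to prove the theorem by a Mayer--Vietoris--five-lemma bootstrap: establish the claim for the case when $K$ is a closed Euclidean ball in a chart (where it follows from the local orientation data of $M$), extend to finite unions of such pieces by induction, and then pass to arbitrary compact pairs via a direct limit. First I would verify that the map is well defined. Since $\check H^i(K,L)=\varinjlim_{(U,V)}H^i(U,V)$ ranges over open neighborhoods $(U,V)\supset (K,L)$ in $M$, and since the fundamental class $\Gamma_K\in H_n(M,M-K)$ is the image under restriction of classes $\Gamma_{\bar U}\in H_n(M,M-\bar U)$ for small $U$, the cap products $H^i(U,V)\to H_{n-i}(M-L,M-K)$ assemble into a coherent system. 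Passing to the limit and using compatibility of cap products with restriction yields the claimed map $D\colon\check H^i(K,L)\to H_{n-i}(M-L,M-K)$, natural in the compact pair $(K,L)$.

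For the base case I would take $L=\emptyset$ and $K$ a small closed ball $\bar B$ inside a Euclidean chart. Then $\check H^*(\bar B)$ is concentrated in degree zero, and by excision $H_*(M,M-\bar B)\cong H_*(\R^n,\R^n-0)$, so $D$ reduces to the standard local orientation isomorphism. The inductive step is the Mayer--Vietoris comparison: given compact pairs $(K_1,L_1)$, $(K_2,L_2)$ and $(K_1\cap K_2,L_1\cap L_2)$ for which the theorem is already known, I would build a ladder connecting the Čech Mayer--Vietoris sequence for $\check H^*$ of compact pairs on one side with the ordinary Mayer--Vietoris sequence for the open sets $M-L_i$, $M-K_i$ and their unions on the other, using naturality of the cap product and restrictions of fundamental classes. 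The five lemma then extends the isomorphism to $(K_1\cup K_2,L_1\cup L_2)$. Iterating proves the theorem when each of $K$, $L$ is a finite union of closed balls from charts. An arbitrary compact pair $(K,L)$ is then a nested intersection of such finite-union neighborhood pairs; since Čech cohomology realizes this direct limit on the left and excision plus compactness give the corresponding continuity on the right, the isomorphism passes to the limit.

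The step I expect to be the main obstacle is the careful bookkeeping in the Mayer--Vietoris ladder: one must check that the connecting homomorphisms commute, up to the correct signs, with the cap product with restrictions of the fundamental class, and this requires verifying that the collection of fundamental classes $\{\Gamma_K\}_K$ is compatible across inclusions and Mayer--Vietoris decompositions. A subsidiary technical point is confirming that the direct limit $\varinjlim H^i(U,V)$ over open neighborhoods of $(K,L)$ genuinely computes $\check H^i(K,L)$ in the topological-manifold setting (so that no ENR or polyhedral hypothesis is needed), which uses the manifold structure of $M$ to ensure that $(K,L)$ admits a cofinal system of suitably nice open neighborhood pairs.
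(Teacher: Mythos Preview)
The paper does not actually prove this theorem. It attributes the result to Dold, describes the construction of the map in detail (equations \eqref{E: Dold duality} and \eqref{E: Dold fund}), and then says ``See \cite{Dold} for the proof that this is, in fact, an isomorphism.'' So there is no proof in the paper itself to compare against.

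That said, your outline is essentially the proof in Dold's book (Proposition VIII.7.2), which is what the paper cites. Dold proceeds through six cases: a point, a compact convex set in a single chart, a finite union of such sets within one chart, a finite union across several charts, an arbitrary compact $K$ with $L=\emptyset$, and finally an arbitrary compact pair $(K,L)$. Your sketch collapses a few of these steps but captures the essential Mayer--Vietoris bootstrap followed by a direct limit, and it is correct in outline.

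Two remarks. First, the map is slightly more delicate than ``cap with $\Gamma_K$'': as the paper spells out, for each open neighborhood pair $(V,W)\supset(K,L)$ one first excises $L$ to pass to $H^i(V-L,W-L)$, then caps with the class $j_*^{W,K}\Gamma_K$ obtained by pushing $\Gamma_K$ through an excision into $H_n(V-L,(V-K)\cup(W-L))$, and finally excises again to land in $H_{n-i}(M-L,M-K)$. Your informal description is compatible with this, but the Mayer--Vietoris ladder and the sign check for the connecting maps require this precise formulation. Second, your concern about whether $\varinjlim_{(U,V)} H^i(U,V)$ computes $\check H^i(K,L)$ is addressed in Dold, Section VIII.6: compact pairs in a manifold (indeed in any ENR) are taut, so the identification holds without further hypotheses.
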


Here $\check H^*(K,L)$ is the \v{C}ech cohomology of the pair $(K,L)$, which is isomorphic to the direct limit $\dlim H^*(V,W)$ as $(V,W)$ ranges over open neighborhood pairs of $(K,L)$ and the maps of the direct system are induced by restrictions to subspaces. See \cite[Section VIII.6]{Dold}. 

More specifically, the Dold duality isomorphism is constructed as follows: First let $(V,W)$ be a pair of open neighborhoods of $(K,L)$ and consider the composite 
\begin{equation}\label{E: Dold duality}
H^i(V,W)\xr{\cong} H^i(V-L,W-L)\xr{\frown j_*^{W,K}\Gamma_K}H_{n-i}(V-L,V-K)\xr{\cong}
H_{n-i}(M-L,M-K).
\end{equation}
The unlabeled maps are induced by inclusions, the leftmost being the excision isomorphism with $L$ being excised and the rightmost being the excision isomorphism with $M-V$ being excised. The class  $\Gamma_K\in H_n(M,M-K)$ is the fundamental class of $M$ over $K$ (see \cite[Lemma 3.27]{Ha} or \cite[Chapter 8]{GBF35}), and $j_*^{W,K}$ is the composition
\begin{equation}\label{E: Dold fund}
H_*(M,M-K)\to H_*(M,(M-K)\cup W)\xleftarrow{\cong }H_*(V-L, (V-K)\cup (W-L)),
\end{equation}
the leftward map being an excision isomorphism that excises the set $(M-V)\cup L$. Dold's isomorphism is the direct limit of the composition \eqref{E: Dold duality}  over all open neighborhoods $(V,W)$ of $(K,L)$. See \cite{Dold} for the proof that this is, in fact, an isomorphism.  This is Dold's version of Poincar\'e duality, and the reader may be more convinced of the plausibility of this claim by drawing some pictures, which, with the help of the right homotopy equivalences, can make the cap product appearing here look a lot like a more common form of Lefschetz duality.

So that we can later utilize Dold's isomorphism to obtain chain maps of the appropriate degree, we adopt the following definition:

\begin{definition}\label{D: Dold duality}
Define the \emph{Dold duality isomorphism} $\mf D:\check H^i(K,L)\to H_{n-i}(M-L,M-K)$ to be $(-1)^{in}$ times the direct limit of the composition \eqref{E: Dold duality}. The extra sign is necessary to be consistent with duality maps being chain maps of the appropriate  degree at the chain level; see \cite[Section 1.3]{GBF18} or \cite[Remark 8.2.2]{GBF35} for details.
\end{definition}

We demonstrate the naturality of the Dold duality isomorphism with respect to inclusion maps in the variables $K$, $L$, and $M$.

\begin{proposition}\label{P: natural Dold}
Let $(K,L)\subset (K',L')$ be  pairs of compact subsets of an oriented topological $n$-manifold $M'$ contained in a larger oriented topological $n$-manifold $M$, i.e.\ $M'\subset M$. Then there is a commutative diagram
\begin{diagram}
\check H^i(K,L)&\lTo& \check H^i(K',L')\\
\dTo^{\mf D}&&\dTo^{\mf D}\\
H_{n-i}(M-L,M-K)&\lTo& H_{n-i}(M'-L',M'-K'),
\end{diagram}
in which the horizontal maps are induced by inclusions and the vertical maps are  Dold duality isomorphisms.
\end{proposition}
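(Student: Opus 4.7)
The plan is to reduce commutativity of the stated diagram to a commutative pre-limit square and then verify that square by naturality. Since $M'$ and $M$ are both oriented $n$-manifolds with $M'\subset M$, invariance of domain forces $M'$ to be open in $M$. Consequently any open neighborhood pair $(V',W')$ of $(K',L')$ in $M'$ serves simultaneously as an open neighborhood of $(K,L)$ in $M$, and such pairs are cofinal in both direct systems defining the relevant \v{C}ech cohomologies. Thus it suffices to show that when we apply the composition \eqref{E: Dold duality} using a common $(V',W')$ to compute each of the two Dold duality maps, the resulting pre-limit rectangle commutes.

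Decompose this pre-limit rectangle, following the three arrows in \eqref{E: Dold duality}, into three horizontal sub-rectangles. The top sub-rectangle involves the two excision isomorphisms $H^i(V',W') \xr{\cong} H^i(V'-L, W'-L)$ and $H^i(V',W')\xr{\cong} H^i(V'-L',W'-L')$ along with the restriction map between their targets, and it commutes by naturality of excision. The bottom sub-rectangle consists of the two excision isomorphisms to $H_{n-i}(M-L,M-K)$ and $H_{n-i}(M'-L',M'-K')$ together with the inclusion-induced map between these targets, and commutes by naturality of excision and of inclusion.

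The middle sub-rectangle is the substantive piece: it involves the cap products with $\gamma_1 := j_*^{W',K}\Gamma_K^M$ and $\gamma_2 := j_*^{W',K'}\Gamma_{K'}^{M'}$. Denoting by $f$ the inclusion of triads
\[
(V'-L';\, W'-L',\, V'-K')\hookrightarrow (V'-L;\, W'-L,\, V'-K),
\]
the projection formula $f_*(f^*\alpha\frown\beta)=\alpha\frown f_*\beta$ for the cap product reduces commutativity of this middle rectangle to the identity $f_*\gamma_2=\gamma_1$ in $H_n(V'-L,\,(V'-K)\cup (W'-L))$. Unwinding the definition \eqref{E: Dold fund} of $j_*^{W,K}$ as an inclusion-induced map followed by an excision, this identity follows from two standard compatibilities of the fundamental class: (i) for $K\subset K'$ compact subsets of a common oriented manifold, $\Gamma_{K'}$ maps to $\Gamma_K$ under the inclusion-induced map $H_n(M,M-K')\to H_n(M,M-K)$; and (ii) for compact $K$ in an open submanifold $M'\subset M$, the excision isomorphism $H_n(M,M-K)\cong H_n(M',M'-K)$ carries $\Gamma_K^M$ to $\Gamma_K^{M'}$. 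Combining (i) and (ii) yields a commutative diagram whose top row computes $j_*^{W',K'}\Gamma_{K'}^{M'}$, whose bottom row computes $j_*^{W',K}\Gamma_K^M$, and whose rightmost vertical is $f_*$; this establishes the desired identity.

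Taking direct limits preserves commutativity, and because the overall sign $(-1)^{in}$ introduced in Definition~\ref{D: Dold duality} appears uniformly on both sides, it does not affect the conclusion. The main obstacle is not conceptual but bookkeeping: tracking which subspace is excised at each step, verifying each small diagram of pairs commutes in the category of spaces, and invoking the appropriate naturality. All of the necessary ingredients (naturality of excision, naturality of the cap product via the projection formula, and compatibility of fundamental classes under both inclusions of compact subsets and inclusions of open submanifolds) are classical, so the real work is organizational.
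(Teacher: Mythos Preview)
Your argument is essentially correct and follows the same strategy as the paper's: verify a pre-limit rectangle by naturality of excision and of the cap product, the latter reduced to a compatibility of fundamental classes. Your version is in fact a slight streamlining of the paper's proof, which carries along an auxiliary neighborhood pair $(V,W)\subset (V',W')$ of $(K,L)$ and a three-column ladder before taking two successive limits; by taking $(V,W)=(V',W')$ from the start, as you do, the left two columns of that ladder collapse into one.

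One correction is needed, however. Neighborhood pairs $(V',W')$ of $(K',L')$ are \emph{not} cofinal in the direct system of neighborhoods of $(K,L)$ when $(K,L)\subsetneq (K',L')$: any such $V'$ must contain all of $K'$, so these neighborhoods cannot shrink down to $K$. Fortunately your argument does not actually require cofinality. What you use is only that each such $(V',W')$ is \emph{some} neighborhood of $(K,L)$ in $M$, so that the Dold composition \eqref{E: Dold duality} for $(M,K,L,V',W')$ applied to a representative $\tilde\alpha\in H^i(V',W')$ computes $\mf D$ of the image of $\tilde\alpha$ in $\check H^i(K,L)$; and that image is precisely the restriction of $\alpha$ under the top horizontal map. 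Hence the limit over $(V',W')$ of your left vertical really is the composition $\check H^i(K',L')\to\check H^i(K,L)\xrightarrow{\mf D}H_{n-i}(M-L,M-K)$, as required. The paper achieves the same end by first passing to the limit over all $(V,W)\supset(K,L)$, which produces $\check H^i(K,L)$ directly, and then taking the limit over $(V',W')$. Your shortcut is valid once the erroneous cofinality claim is replaced by this observation.
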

\begin{proof}
Let $(V,W)$ be a pair of open neighborhoods of $(K,L)$, and, similarly, let $(V',W')$ be a pair of open neighborhoods of $(K',L')$.
We further suppose that $(V,W)\subset (V',W')$.

Now consider the following diagram:

\begin{diagram}
H^i(V,W)&\lTo& H^i(V',W')&\lTo^=&H^i(V',W')\\
\dTo&&\dTo&&\dTo\\
H^i(V-L,W-L)&\lTo& H^i(V'-L,W'-L)&\rTo&H^i(V'-L',W'-L')\\
\dTo^{\frown j_*^{W,K}\Gamma_K}&&\dTo^{\frown j_*^{W',K}\Gamma_K}&&\dTo^{\frown j_*^{W',K'}\Gamma_{K'}}\\
H_{n-i}(V-L,V-K)&\rTo& H_{n-i}(V'-L,V'-K)&\lTo&H_{n-i}(V'-L',V'-K')\\
\dTo&&\dTo&&\dTo\\
H_{n-i}(M-L,M-K)&\lTo^=& H_{n-i}(M-L,M-K)&\lTo&H_{n-i}(M'-L',M'-K').
\end{diagram}
Here, the unlabeled maps are induced by inclusions, $\Gamma_K\in H_n(M,M-K)$ and $\Gamma_{K'}\in H_n(M',M'-K')$ are orientation classes (see \cite[Lemma 3.27]{Ha}), while $j_*^{W,K}$ is the composite \eqref{E: Dold fund}.
 The maps $j_*^{W',K}$ and $j_*^{W',K'}$ are defined analogously. The direct limit over all such $(V,W)\supset (K,L)$ of the maps down the left side of the diagram is, up to sign $(-1)^{in}$, the  Dold isomorphism $\check H^i(K,L)\to
H_{n-i}(M-L,M-K)$, and similarly the direct limit over all such $(V',W')\supset (K',L')$ of the maps down the right side is the Dold isomorphism $\check H^i(K',L')\to H_{n-i}(M'-L',M'-K')$ up to the same sign. That these limits of maps are well-defined is demonstrated in \cite{Dold}. 

Let us observe that the diagram commutes. This is immediate for the squares not involving cap products. For the squares involving cap products, the commutativity is due to the naturality of the cap product \cite[\S VII.12.6]{Dold}. For the square on the left, we use naturality with respect to the inclusion map of triples $(V-L; V-K, W-L)\to (V'-L; V'-K, W'-L)$, and for the square on the right, this we use naturality with respect to  the inclusion map of triples $(V'-L'; V'-K', W'-L')\to (V'-L; V'-K, W'-L)$. To utilize the naturality, we observe that the images of $j_*^{W,K}\Gamma_K$ and $j_*^{W',K'}\Gamma_{K'}$ under the respective  maps induced by inclusion $H_n(V-L, (V-K)\cup( W-L))\to H_n(V'-L, (V'-K)\cup( W'-L))$ and  $H_n(V'-L', (V'-K')\cup( W'-L'))\to H_n(V'-L, (V'-K)\cup( W'-L))$ are each indeed $j_*^{W',K}\Gamma_K$. This follows from an easy commutative diagram argument, noting that the image of $\Gamma_{K'}$ in $H_n(M,M-K)$ is $\Gamma_K$, by \cite[Lemma 3.27]{Ha}. 

An easy diagram chase now shows that the outer square of the diagram commutes, yielding the commutative square

\begin{diagram}
H^i(V,W)&\lTo&H^i(V',W')\\
\dTo&&\dTo\\
H_{n-i}(M-L,M-K)&\lTo&H_{n-i}(M'-L',M'-K'),
\end{diagram}
and taking the direct limit over $(V,W)\supset (K,L)$ yields a diagram 
\begin{diagram}
\check H^i(K,L)&\lTo&H^i(V',W')\\
\dTo^{\mf D}&&\dTo\\
H_{n-i}(M-L,M-K)&\lTo&H_{n-i}(M'-L',M'-K'),
\end{diagram}
where the lefthand vertical map is Dold's isomorphism. Lastly, taking the direct limit over $(V',W')\supset (K',L')$ gives the diagram 
\begin{diagram}
\check H^i(K,L)&\lTo&\check H^i(K',L')\\
\dTo^{\mf D}&&\dTo^{\mf D}\\
H_{n-i}(M-L,M-K)&\lTo&H_{n-i}(M'-L',M'-K').
\end{diagram}
\end{proof}

We also need to know how Dold's isomorphism interacts with boundary maps:

\begin{proposition}\label{P: Dold boundary}
Let  $L\subset K \subset J$ be compact subsets of an oriented topological $n$-manifold $M$.  
The following diagram commutes up to the sign $(-1)^n$:
\begin{diagram}
\check H^i(K,L)&\rTo^{d^*}&\check H^{i+1}(J,K)\\
\dTo^{\mf D}&&\dTo^{\mf D}\\
H_{n-i}(M-L,M-K)&\rTo^{\bd_*}&H_{n-i-1}(M-K,M-J).
\end{diagram}
\end{proposition}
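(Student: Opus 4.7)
The plan is to reduce the statement to a naturality property of the cap product with respect to connecting homomorphisms in the long exact sequence of a triple, and then to reconcile the signs introduced by the $(-1)^{in}$ normalization in Definition~\ref{D: Dold duality}.

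First I would pick nested open neighborhoods $W\subset V\subset U$ of $L\subset K\subset J$ in $M$, with each of $V-L$, $V-K$, $W-L$, etc., chosen so that the relevant excisions make sense. By the definition of \v{C}ech cohomology, $d^*:\check H^i(K,L)\to \check H^{i+1}(J,K)$ is realized in the direct limit by the connecting homomorphism $d^*:H^i(V,W)\to H^{i+1}(U,V)$ of the triple $(U,V,W)$. Likewise, the boundary map $\partial_*:H_{n-i}(M-L,M-K)\to H_{n-i-1}(M-K,M-J)$ is the connecting homomorphism of the triple $(M-L,M-K,M-J)$. The fundamental class $\Gamma_K$ is the image of $\Gamma_J$ under $H_n(M,M-J)\to H_n(M,M-K)$, as noted using \cite[Lemma 3.27]{Ha}.

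Next I would expand the square into a diagram containing two parallel columns of the composition \eqref{E: Dold duality}, one for the pair $(K,L)$ with neighborhood $(V,W)$ and one for the pair $(J,K)$ with neighborhood $(U,V)$, connected horizontally by the cohomology connecting map $d^*$ on top and the homology boundary $\partial_*$ at the bottom. The intermediate horizontal maps are the connecting homomorphisms of the appropriate triples on the complements, e.g.\ $H_{n-i}(V-L,V-K)\xr{\partial_*} H_{n-i-1}(V-K,V-J)$. The squares involving only inclusions and excisions commute by naturality of the connecting homomorphism in the triple sequence. The only non-formal square is the central one comparing $\frown j_*^{W,K}\Gamma_K$ with $\frown j_*^{U,K}\Gamma_K$ (which equals $\frown j_*^{U,J}\Gamma_J$ after the identification of fundamental classes) across a connecting map: here I would invoke the standard compatibility of cap product with connecting homomorphisms of triples (as in \cite[VII.12.18]{Dold}), which yields $\partial_*(\alpha\frown \gamma)=(-1)^{n}\,(d^*\alpha)\frown \gamma$ on the relevant pairs of triples once one tracks that $\gamma$ has degree $n$.

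Passing to the direct limit first over $(V,W)\supset (K,L)$ and then over $(U,V)\supset (J,K)$ converts the outer commuting square (with the sign $(-1)^n$) into the asserted relation between $d^*$ on $\check H^*$ and $\partial_*$ on $H_*$, up to the sign coming from cap product and the two normalizing factors $(-1)^{in}$ and $(-1)^{(i+1)n}$ of Definition~\ref{D: Dold duality}. The difference of these normalizing factors is $(-1)^n$, which combines with the $(-1)^n$ from the cap product/boundary identity to give overall $(-1)^{2n}=1$ before we reintroduce the declared $(-1)^n$ on the right-hand side of the stated proposition; in other words, the naked composition $\mf D\circ d^*$ differs from $\partial_*\circ \mf D$ exactly by $(-1)^n$. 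The main obstacle will be bookkeeping the signs cleanly: keeping straight which sign comes from the cap-boundary formula in Dold's conventions versus which comes from the normalization $(-1)^{in}$, and making sure that the particular form of the connecting homomorphism used in each triple really is the one to which the cap-product compatibility applies (so that no extra signs from a switch of conventions sneak in).
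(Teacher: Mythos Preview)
Your overall strategy is sound and does lead to the result, but it differs from the paper's route and your sign bookkeeping in the final paragraph is internally inconsistent.

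\textbf{Comparison with the paper.} Rather than working directly with a triple of open neighborhoods $(U,V,W)$ and the cap-product/connecting-map identity, the paper factors the problem through the \emph{absolute} case: it inserts the intermediate column $\check H^i(K)\xr{\mc D} H_{n-i}(M,M-K)$ between the two relative columns, so that each small square is exactly one of the cases treated in Dold's proof of VIII.7.2 (Case~6). The paper then checks, via naturality of long exact sequences, that the horizontal compositions through these absolute groups are the triple connecting maps $d^*$ and $\bd_*$. This buys a clean sign computation: the unsigned map $\mc D$ satisfies $\bd_*\mc D=\mc D d^*$ on the nose, and the $(-1)^n$ arises solely from the normalization difference $(-1)^{(i+1)n}/(-1)^{in}$.

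\textbf{The sign issue in your argument.} You claim that the cap-product/boundary identity contributes a $(-1)^n$ \emph{and} that the normalization contributes another $(-1)^n$, giving $(-1)^{2n}=1$, yet then conclude that $\mf D\circ d^*$ and $\bd_*\circ\mf D$ differ by $(-1)^n$. These two statements are incompatible. In Dold's conventions the unsigned square involving $\frown\Gamma$ commutes \emph{exactly} (this is precisely what Case~6 of VIII.7.2 establishes), so the only $(-1)^n$ is the one from the normalization. If you pursue your direct approach, you need to verify carefully that the particular form of the cap-product/connecting-map identity you invoke (e.g.\ VII.12) really gives no sign in this setup; otherwise your computation yields exact commutativity rather than $(-1)^n$. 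You flagged the sign bookkeeping as the main obstacle, and indeed that is where the argument as written breaks down.
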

\begin{proof}
Consider the following diagram, in which $\mc D$ is the Dold isomorphism but without our added sign (i.e.\ $\mc D(\alpha)=(-1)^{|\alpha|n}\mf D(\alpha)$):

\begin{diagram}
\check H^i(K,L)&\rTo& \check H^i(K)&\rTo^{d^*}&\check H^{i+1}(J,K)\\
\dTo^{\mc D}&&\dTo^{\mc D}&&\dTo^{\mc D}\\
H_{n-i}(M-L,M-K)&\rTo&H_{n-i}(M,M-K)&\rTo^{\bd_*}&H_{n-i-1}(M-K,M-J).
\end{diagram}
The unlabeled maps are induced by inclusions.
Each of the squares commute by the arguments in Case 6 in the proof of \cite[Proposition VIII.7.2]{Dold}. The bottom horizontal composition is equal to the boundary map in the exact sequence of the triple $(M-L,M-K,M-J)$, as we see by the natural transformation to the  sequence of the triple $(M,M-K,M-J)$:

\begin{diagram}
&\rTo&H_{n-i}(M-L,M-J)&\rTo&H_{n-i}(M-L,M-K)&\rTo^{\bd_*}&H_{n-i-1}(M-K,M-J)&\rTo\\
&&\dTo&&\dTo&&\dTo^=\\
&\rTo&H_{n-i}(M,M-J)&\rTo&H_{n-i}(M,M-K)&\rTo^{\bd_*}&H_{n-i-1}(M-K,M-J)&\rTo
\end{diagram}
Similarly, the composition across the top is the cohomology coboundary map of the triple $(J,K,L)$ via

\begin{diagram}
&\rTo&\check H^i(J,L)&\rTo&\check H_i(K,L)&\rTo^{d^*}&\check H^{i+1}(J,K)&\rTo&\\
&&\dTo&&\dTo&&\dTo^=\\
&\rTo&\check H^i(J)&\rTo&\check H^i(K)&\rTo^{d^*}&\check H^{i+1}(J,K)&\rTo&.
\end{diagram}

So, if $\alpha\in \check H^i(K,L)$ then 
\begin{align*}
\mf D(d^*(\alpha))&=(-1)^{(i+1)n}\mc D (d^*(\alpha))\\
&=(-1)^{(i+1)n}\bd_*(\mc D(\alpha))\\
&=(-1)^n\bd_*(\mf D(\alpha)).
\end{align*}
\end{proof}

\subsection{Goresky-MacPherson duality}\label{S: GM}

In the appendix to \cite{GM1}, Goresky and MacPherson provide a generalization of the Dold duality isomorphism to PL stratified pseudomanifolds. However, there is an error in their argument. In this section, we prove a slight modification that does not claim the full generality of the original Goresky-MacPherson statement but that is sufficient both for the applications here and in \cite{GM1}. 

The setting in \cite[Appendix]{GM1} assumes a compact oriented   $n$-dimensional PL pseudomanifold $X$ with singular locus $\Sigma$ and 
with  $A\subset B$ two  \emph{constructible subsets}, i.e.\ unions of interiors of simplices in some triangulation of $X$. It is also assumed that $B-A\subset X-\Sigma$ or, equivalently, that $A\cap \Sigma=B\cap \Sigma$. The claimed duality isomorphism is of the form $H^i(B,A)\to H_{n-i}(X-A,X-B)$, induced by the cap product with the fundamental class. The first step in constructing this isomorphism, which occurs in the proof of the lemma on page 162, 
is ``$H^i(B,A)\cong H^i(B\cup \Sigma, A\cup \Sigma)$ by excision of $\Sigma-(A\cap \Sigma)$.'' However, this is not always an isomorphism. 

For example, consider $B=X-\Sigma$ and $A=\emptyset$, in which case $(X-A,X-B)=(X,\Sigma)$; this case is of practical importance to the intersection theory as elements of $H_n(X,\Sigma)$ arise as homological representatives of  the fundamental class of $X$ in the Goresky-MacPherson version of the intersection pairing. Note that the requirement $B-A=X-\Sigma\subset X-\Sigma$ is satisfied trivially here, and $B$ is constructible as $\Sigma$ is a subcomplex in any triangulation compatible with the stratification. But then the excision isomorphism would be $H^i(X-\Sigma)\cong H^i(X, \Sigma)$, which is false, for example, when $i=0$, $\Sigma$ is neither empty nor all of $X$, and $X$ is connected.  

We will give a construction of the Goresky-MacPherson isomorphism below as Proposition \ref{P: GM dual}. Our approach 
utilizes essentially the same basic ideas as that of Goresky and MacPherson but avoids the problematic excision. We  simplify matters somewhat by limiting ourselves to the case where, in the Goresky-MacPherson notation, $X-A$ and $X-B$ are subcomplexes of some triangulation of $X$, as this is the only situation required for the construction of intersection products both below and in \cite{GM1}. Note that the example of the prior paragraph satisfies this condition, so the limitation does not obviate the error in \cite{GM1}. 

\begin{remark}\label{R: GM bdbd}
 There is another minor logical problem with the Goresky-MacPherson construction that is cured using the approach of McClure in \cite{McC}, which is emulated in \cite{GBF18} and below:  In Section 2.1 of \cite{GM1}, Goresky and MacPherson define a notion of \emph{dimensional transversality} and define their intersection product on pairs of chains $C,D$ such that the following pairs are dimensionally transverse: $(C,D)$, $(C,\bd D)$, and $(\bd C,D)$. If we let $C\pf D$ denote the Goresky-MacPherson intersection product\footnote{This is not the notation in \cite{GM1}, where $\pf$ serves a different purpose.},  it is then claimed that $\bd (C\pf D)=(\bd C)\pf D+(-1)^{n-|C|}C\pf (\bd D)$. The trouble is that in order for, say, $(\bd C)\pf D$ to be defined, one must then \emph{also} know that the pair $(\bd C,\bd D)$ is dimensionally transverse, which is not a part of the initial assumption. This does not become a serious problem in \cite{GM1}, however, as the primary interest there is in the intersection of cycles. 
\end{remark}

In the following proposition, we do not require that $X$ be a pseudomanifold, but merely that we have a compact PL pair $(X,S)$ such that  $X-S$ is an  $n$-dimensional oriented manifold.

\begin{proposition}\label{P: GM dual}
Let $(X,{S})$ be a compact  PL space pair such that $X-{S}$ is an oriented $n$-dimensional (topological) manifold, and let $L\subset K\subset X$ be compact PL subspaces, such that $S\subset L$. Then there is an isomorphism $\D:H^i(X-L,X-K)\to H_{n-i}(K,L)$ composed of excisions, isomorphisms induced by inclusions, and the Dold duality isomorphism. 
\end{proposition}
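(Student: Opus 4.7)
The plan is to construct $\D$ as an explicit composition of inclusion-induced isomorphisms, one excision, and one application of the Dold duality isomorphism $\mf D$ in the open manifold $M := X - S$. The hypothesis $S\subset L$ is what lets us slide everything relevant into $M$, while the PL structure allows us to replace $L$ and $K$ by regular neighborhoods whose complements are compact in $M$ (where Dold duality actually applies).

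First I would choose a triangulation of $X$ compatible with $S$, $L$, and $K$, and in a suitable (second) barycentric subdivision form the closed PL regular neighborhoods $N_L \subset N_K$ of $L \subset K$, so that $S \subset L \subset \operatorname{int}(N_L) \subset \operatorname{int}(N_K) \supset K$. Set $L^* := X - \operatorname{int}(N_K)$ and $K^* := X - \operatorname{int}(N_L)$; these are compact subcomplexes of $X$ disjoint from $S$, hence compact PL subsets of the manifold $M$, satisfying $L^* \subset K^*$, $M - L^* = \operatorname{int}(N_K) - S$, and $M - K^* = \operatorname{int}(N_L) - S$.

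I would then realize $\D$ as the composite
\begin{multline*}
H^i(X-L, X-K) \xrightarrow{\cong} H^i(K^*, L^*) \xrightarrow{\mf D} H_{n-i}(\operatorname{int}(N_K)-S,\, \operatorname{int}(N_L)-S) \\
\xleftarrow{\cong} H_{n-i}(\operatorname{int}(N_K),\, \operatorname{int}(N_L)) \xrightarrow{\cong} H_{n-i}(K, L).
\end{multline*}
The first arrow is induced by the inclusion of pairs and is an isomorphism by the five-lemma applied to the long exact sequences, since the standard collar of $N_L - L$ inside the regular neighborhood $N_L$ provides a deformation retraction $X - L \simeq X - \operatorname{int}(N_L) = K^*$, and similarly $X - K \simeq L^*$. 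The second arrow is the Dold duality isomorphism of Definition \ref{D: Dold duality} applied in $M$ to the compact PL pair $(K^*, L^*)$, using that $\check H^i = H^i$ for PL (hence CW-like) pairs. The third arrow is the excision removing the closed set $S$ from the open pair $(\operatorname{int}(N_K), \operatorname{int}(N_L))$, which is legitimate precisely because $S \subset L \subset \operatorname{int}(N_L)$ and $\operatorname{int}(N_L)$ is open. The last arrow is inclusion-induced: the open interior of a regular neighborhood includes into the closed neighborhood as a homotopy equivalence, and each closed regular neighborhood deformation retracts onto its core.

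The main obstacle I expect is checking the set-theoretic containments cleanly, in particular that $S \subset \operatorname{int}(N_L)$ — this is the one place the hypothesis $S \subset L$ is used, and it is exactly where the Goresky-MacPherson excision of $\Sigma$ from $(B \cup \Sigma, A \cup \Sigma)$ broke down in their original argument. Once the regular-neighborhood bookkeeping is in place, the remaining arrows are routine appeals to PL collar theory, excision, and the five-lemma, with Dold duality doing the genuinely geometric work.
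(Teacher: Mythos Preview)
Your proposal is correct and follows essentially the same route as the paper. The paper works in a triangulation in which $S$, $L$, $K$ are full subcomplexes and uses the simplicial complement $C_Z$ (the union of simplices disjoint from $Z$) together with the open star $St(Z)=X-C_Z$ in place of your regular neighborhoods: your $K^*,L^*$ play the role of the paper's $C_L,C_K$, and your $\operatorname{int}(N_K),\operatorname{int}(N_L)$ play the role of $St(K),St(L)$; the sequence of isomorphisms (homotopy equivalence, Dold duality in $M=X-S$, excision of $S$, homotopy equivalence) is identical. The only item you omit is the verification that $\D$ is independent of the chosen triangulation/neighborhoods, which the paper handles by comparing two choices via a common refinement and Proposition~\ref{P: natural Dold}.
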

\begin{proof}
As ${S}$, $L$, and $K$ are PL subspaces of $X$, there is a triangulation $T$ of $X$ with respect to which these subspaces arise as subcomplexes \cite[Addendum 2.12]{RS}. Furthermore, by replacing $T$ with a subdivision, if necessary, we may assume that each is a full subcomplex of $T$, using \cite[Lemma 3.3]{RS}. 

Suppose $Z$ is the underlying space of a full subcomplex of $T$. Let $C_Z$ be the the union of all simplices in $T$ that are disjoint from $Z$.  Then, by \cite[Lemma 70.1]{MK} and its proof, the subcomplex corresponding to $C_Z$ is also a full subcomplex of $T$, $Z$ is a deformation retract of $X-C_Z$, and $C_Z$ is a deformation retract of $X-Z$. Furthermore, if, as in \cite[Section 72]{MK} (modifying that notation slightly),  we let $St(Z)$  be the union of the interiors of all simplices of $T$ that have a face in $Z$, then $X-C_Z=St(Z)$.  

Note that, as $X$ is compact, each $C_Z$ is also compact. Also, as ${S}\subset L\subset K$, both sets $C_L$ and $C_K$ are subsets of the manifold $X-{S}$. 

The claimed isomorphism of the lemma is the composite of the following isomorphisms:

\begin{align}
H^i(X-L,X-K)&\xr{\cong} H^i(C_L,C_K)&\text{homotopy equivalences}\notag\\
&\xr{\mf D} H_{n-i}((X-{S})-C_K, (X-{S})-C_L) &\text{Dold}\notag\\
&= H_{n-i}((X-C_K)- {S}, (X-C_L)-{S}) &\text{set equality} \label{E: Dold}\\
&= H_{n-i}(St(K)-{S}, St(L)-{S})&\text{see above}\notag\\
&\xr{\cong}H_{n-i}(St(K), St(L))&\text{by excision, see below}\notag\\
&\xleftarrow{\cong} H_{n-i}(K, L) &\text{homotopy equivalences.}\notag
\end{align}
 
For the excision in the fifth line, which excises ${S}$, we note that ${S}$ is a closed subcomplex contained in the interior of $St(L)$ by construction. 

The map labeled ``Dold'' is the duality isomorphism reviewed in the preceding section. Note that $\check H^i(C_L\cup{S},C_K\cup {S})\cong  H^i(C_L\cup{S},C_K\cup {S})$ by \cite[Proposition CIII.6.12]{Dold}, as both spaces are ENRs as subcomplexes of compact simplicial complexes (in fact any finite CW complex is an ENR \cite[Corollary A.10]{Ha}). 

The construction of $\D$ as stated depends on the triangulation $T$. But if $T'$ is a sufficiently fine refinement of $T$ we obtain complement sets $C'_L$, $C'_K$ and star sets $St'(K)$, $St'(L)$ with $C_K\subset C'_K$, $C_L\subset C'_L$, $St(K')\subset St(K)$, and $St(L')\subset St(L)$. We can then easily form a ladder diagram comparing our first definition of $\D$ with that using $T'$. Such a diagram commutes by the naturality of inclusions and of the Dold map $\mf D$, using Proposition \ref{P: natural Dold}. As any two triangulations of $X$ will have a common refinement, this demonstrates that $\D$ does not depend on the choice of $T$. 
\end{proof}

We now prove some properties of the duality isomorphism $\D$.

\begin{lemma}\label{L: natural dual}
The duality isomorphism of Lemma \ref{P: GM dual} is natural in the sense that if  $(K,L)\subset (K',L')$ are pairs of compact PL subspaces satisfying the criteria of Lemma \ref{P: GM dual}, then there is a commutative diagram
\begin{diagram}
H^i(X-L,X-K)&\rTo& H^i(X-L',X-K')\\
\dTo^\D&&\dTo^\D\\
H_{n-i}(K,L)&\rTo& H_{n-i}(K',L'),
\end{diagram}
in which the horizontal maps are induced by inclusions and the vertical maps are the isomorphisms of Lemma \ref{P: GM dual}.
\end{lemma}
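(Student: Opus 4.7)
The plan is to verify naturality one rung at a time, after choosing a single triangulation that works for both pairs. By \cite[Addendum~2.12]{RS} and the full-subcomplex refinement from \cite[Lemma~3.3]{RS}, I can choose a triangulation $T$ of $X$ in which each of $S$, $L$, $K$, $L'$, $K'$ is a full subcomplex. Using $T$ to build both instances of $\D$, the construction in the proof of Proposition~\ref{P: GM dual} yields complements with $C_{K'}\subset C_K$ and $C_{L'}\subset C_L$, and stars with $St(K)\subset St(K')$ and $St(L)\subset St(L')$, all strictly compatible with the given inclusions of pairs.

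Next, I stack the two six-term compositions defining $\D$ on $(K,L)$ and on $(K',L')$ and connect them rung by rung using the inclusion-induced maps, producing a ladder with six squares. The homotopy-equivalence squares at the top, $H^i(X-L,X-K)\xr{\cong} H^i(C_L,C_K)$ vs.\ $H^i(X-L',X-K')\xr{\cong} H^i(C_{L'},C_{K'})$, and at the bottom, $H_{n-i}(K,L)\xr{\cong} H_{n-i}(St(K),St(L))$ and its primed analogue, commute because the underlying square of pair-inclusions $(C_{L'},C_{K'})\hookrightarrow (C_L,C_K)$ sitting inside $(X-L',X-K')\hookrightarrow (X-L,X-K)$ commutes on the nose while the vertical inclusions are homotopy equivalences of pairs by \cite[Lemma~70.1]{MK}; the analogous statement holds for the star retractions at the bottom. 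The set-equality and the ``excise $S$'' rungs commute by naturality of excision applied to the inclusion of triples. The central rung is the Dold-naturality statement for the ambient manifold $M=X-S$ applied to the compact subsets $C_K\supset C_L$ and $C_{K'}\supset C_{L'}$; this commutes by Proposition~\ref{P: natural Dold}, where the crucial point is that $X-S$ serves as the ambient manifold for both pairs since $S\subset L\subset L'$. Concatenating the six commuting squares yields the required naturality square.

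The main obstacle is essentially bookkeeping: one must keep straight the reversal of inclusion directions in passing between $X$-complements and star neighborhoods, and must verify that Proposition~\ref{P: natural Dold} genuinely applies with a single ambient manifold (not the two nested manifolds $M'\subset M$ allowed there). Both issues are resolved as soon as the single triangulation $T$ is in place and the common ambient manifold is identified as $X-S$; after that, every rung reduces to either functoriality of (co)homology, naturality of excision, or the previously established Proposition~\ref{P: natural Dold}.
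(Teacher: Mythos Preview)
Your proof is correct and follows essentially the same approach as the paper: choose a single triangulation in which all the relevant subspaces are full subcomplexes, build the evident ladder between the two six-step compositions defining $\D$, and observe that every rung commutes by naturality of inclusions/excision except the Dold rung, which commutes by Proposition~\ref{P: natural Dold} with $M=M'=X-S$. The paper's proof is considerably terser but identical in substance; your added bookkeeping about the inclusion reversals $C_{K'}\subset C_K$, $St(K)\subset St(K')$ and the single ambient manifold is exactly the detail the paper leaves to the reader.
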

\begin{proof}
By the observation at the end of the proof of Lemma \ref{P: GM dual}, we may use any triangulation with respect to which all our subsets are subcomplexes. 
With the exception of the Dold duality isomorphism, it is then straightforward to verify that all of the maps in the sequence of isomorphisms in Proposition \ref{P: GM dual} commute with appropriate inclusion maps. The commutativity for the Dold isomorphisms is provided by Lemma \ref{P: natural Dold}. 
\end{proof}

There are two other observations we will need concerning the duality isomorphism $\D$. As stated, the lemma and its construction utilize the singular set $S$ as part of the input data. It will be useful to know that the isomorphism is in fact independent of $S$, so long as $S\subset L$ and $X-S$ is an oriented $n$-manifold.

\begin{lemma}\label{L: sing indep}
The duality isomorphism of Proposition \ref{P: GM dual} does not depend on the choice of compact set $S$ such that $S\subset L\subset  X$ and such that $X-S$ is an oriented $n$-manifold. 
\end{lemma}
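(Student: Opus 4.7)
The plan is to reduce the claim to a single ambient inclusion by comparing $\D_S$ and $\D_{S'}$ through their common coarsening. Given two valid choices $S$ and $S'$, set $S''=S\cup S'$. I would first verify that $S''$ is itself a valid choice: it is compact as a finite union of compacts, contained in $L$, and $X-S''=(X-S)\cap(X-S')$ is an open subset of both $X-S$ and $X-S'$, hence an $n$-manifold. Its orientation arises by restriction from either side, and these agree because every valid $X-T$ is automatically an open submanifold of $X-\Sigma_0$, where $\Sigma_0$ denotes the common set of non-manifold points of $X$; all such open manifolds inherit a single ambient orientation by restriction, and this is the interpretation of ``oriented'' intended throughout. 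Once $S''$ is validated, it suffices to prove $\D_S=\D_{S''}$, as the symmetric argument with $S'$ in place of $S$ gives $\D_{S'}=\D_{S''}$ and hence $\D_S=\D_{S'}$.

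Next, I would fix a single triangulation $T$ with respect to which $K$, $L$, $S$, and $S''$ are all full subcomplexes, using \cite[Addendum 2.12]{RS} together with subdivision, as in the proof of Proposition \ref{P: GM dual}. The key observation is that the subspaces $C_K$, $C_L$, $St(K)$, $St(L)$ depend only on $T$, and that the two deformation-retract isomorphisms bookending the construction of $\D$ (namely $H^i(X-L,X-K)\cong H^i(C_L,C_K)$ at the start and $H_{n-i}(K,L)\cong H_{n-i}(St(K),St(L))$ at the end) are independent of the chosen $S$. The comparison of $\D_S$ and $\D_{S''}$ therefore reduces to comparing the middle composites
\[
H^i(C_L,C_K)\xrightarrow{\;\mf D\;}H_{n-i}((X-?)-C_K,(X-?)-C_L)\xrightarrow{\;\text{excise }?\;}H_{n-i}(St(K),St(L))
\]
for $?=S$ versus $?=S''$.

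Third, I would show these two composites agree by splitting the comparison into two squares joined by the inclusion-induced map from the $S''$-row to the $S$-row. The left (Dold) square commutes by Proposition \ref{P: natural Dold} applied with $M=X-S$, $M'=X-S''$, and the compact pair $(C_K,C_L)$ appearing in both positions; the orientation compatibility needed to invoke that proposition is precisely what was secured in the first step. The right (excision) square commutes by functoriality: the composite of the inclusion $(St(K)-S'',St(L)-S'')\hookrightarrow(St(K)-S,St(L)-S)$ with the excision of $S$ from the latter agrees with the excision of $S''$ directly from $(St(K),St(L))$, as all three maps are induced by the chain of subspace inclusions $(St(K)-S'',St(L)-S'')\hookrightarrow(St(K)-S,St(L)-S)\hookrightarrow(St(K),St(L))$, and all three are excisions.

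The main obstacle is articulating the orientation identification cleanly enough that Proposition \ref{P: natural Dold} applies with a single compatible fundamental class on both sides; once that is set, the rest of the proof is a routine diagram chase using naturality of Dold duality and of excision, together with the triangulation-independence of $\D$ established at the end of Proposition \ref{P: GM dual}, which allows us to use the common triangulation $T$ above without loss of generality.
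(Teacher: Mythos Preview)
Your proposal is correct and follows essentially the same route as the paper. The only difference is the direction of the reduction: the paper compares $S$ and $S'$ through the \emph{smaller} set $S\cap S'$ (noting that $X-(S\cap S')=(X-S)\cup(X-S')$ is a manifold as a union of open manifolds), whereas you go through the \emph{larger} set $S\cup S'$; in either case the nested comparison is handled by exactly the diagram you describe, invoking Proposition~\ref{P: natural Dold} for the Dold square and functoriality of inclusions for the excision square. Your attention to orientation compatibility is a point the paper leaves implicit.
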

\begin{proof}
Let $S'$ be an alternative such subspace. We may assume that $S'\subset S$, for if not then the claim follows by comparing the isomorphism determined by each of $S$ and $S'$ to that determined by $S\cap S'$. So, assuming that $S'\subset S$, we have the following diagram, continuing to use the notation of the proof of Proposition \ref{P: GM dual}:
\begin{diagram}
H^i(C_L,C_K)&\rTo^=& H^i(C_L,C_K)\\
\dTo^{\mf D}&&\dTo^{\mf D}\\
H_{n-i}((X-{S})-C_K, (X-{S})-C_L)&\rTo&H_{n-i}((X-{S'})-C_K, (X-{S'})-C_L)\\
\dTo^=&&\dTo^=\\
H_{n-i}((X-C_K)- {S}, (X-C_L)-{S}) &\rTo &H_{n-i}((X-C_K)- {S'}, (X-C_L)-{S'})\\
\dTo^=&&\dTo^=\\
H_{n-i}(St(K)-{S}, St(L)-{S})&\rTo&H_{n-i}(St(K)-{S'}, St(L)-{S'})\\
\dTo^\cong&&\dTo^\cong\\
H_{n-i}(St(K), St(L))&\rTo^=&H_{n-i}(St(K), St(L))
\end{diagram}
The top square commutes by Proposition \ref{P: natural Dold} and the remaining squares clearly commute. Commutativity of the whole diagram, together with the definition of the map in  Proposition \ref{P: GM dual}, demonstrates the independence of choice of $S$. 
\end{proof}

We will also need that the duality isomorphism of Proposition \ref{P: GM dual} is natural with respect to ``expansion of the singularity.'' In other words, suppose that we enlarge $X$ to a complex $X\cup T$ such that $X\cap T\subset S$. Then, as $S\subset L$, we have $H_i(K,L)\cong H_i(K\cup T, L\cup T)$, while $(X-(L\cup T),X-(K\cup T))=(X-L,X-K)$. We have the following compatibility:

\begin{lemma}\label{L: expansion}
Let $(X,{S})$ be a compact  PL space pair such that $X-{S}$ is an oriented $n$-dimensional manifold, and let $L\subset K\subset X$ be compact PL subspaces such that $S\subset L$. Let $X\cup T$ be a compact PL space with $X\cap T\subset S$.  Then the following diagram commutes, with each vertical map being the isomorphism of Proposition \ref{P: GM dual}: 
\begin{diagram}
H^i(X-L,X-K)&\rTo^=&H^i(X-(L\cup T),X-(K\cup T))\\
\dTo^\D&&\dTo^\D\\
H_{n-i}(K,L)&\rTo^\cong &H_{n-i}(K\cup T,L\cup T).
\end{diagram}
\end{lemma}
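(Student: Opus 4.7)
The plan is to use Lemma \ref{L: sing indep} to align the two applications of Proposition \ref{P: GM dual} so that they both factor through one and the same Dold duality isomorphism on a common underlying manifold, and then to fit the remaining pieces into a commutative ladder of inclusion-induced maps and excisions.

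First, I choose convenient data. By Lemma \ref{L: sing indep}, on the left I may replace $S$ by $L$ itself; on the right, computed in ambient $X\cup T$, I take the singular set to be $L\cup T$. Both are compact, and both complements equal $X-L$ (for the right, $(X\cup T)-(L\cup T)=X-L$ by $X\cap T\subset S\subset L$), so the two applications of Proposition \ref{P: GM dual} use the same underlying manifold. Fix next a triangulation of $X\cup T$ in which $X$, $T$, $L$, $K$, and hence $L\cup T$, $K\cup T$, $X\cap T$, are full subcomplexes. Writing $C_L,C_K\subset X$ and $C'_{L\cup T},C'_{K\cup T}\subset X\cup T$ for the relevant complement subcomplexes, observe that a simplex of $X$ touching $T$ necessarily touches $X\cap T\subset L$, while every simplex of $T$ touches $L\cup T$; hence $C'_{L\cup T}=C_L$ and $C'_{K\cup T}=C_K$, and consequently $St'(L\cup T)=St(L)\cup T$, $St'(K\cup T)=St(K)\cup T$.

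Expanding each vertical $\D$-map along the composition in Proposition \ref{P: GM dual}, the initial cohomological homotopy equivalences coincide because the standard dual-cell retractions $X-L\to C_L$ and $(X\cup T)-(L\cup T)\to C'_{L\cup T}=C_L$ agree on $X-L$ (here $X\cap T\subset L$ is essential: a simplex of $X$ has the same partition into $L$- and non-$L$-faces as into $(L\cup T)$- and non-$(L\cup T)$-faces). The middle Dold isomorphisms agree tautologically, since they are applied to the same pair $(C_L,C_K)$ in the same manifold $X-L$. After the set-level identification $St(K)-L=St'(K\cup T)-(L\cup T)$ and its analog for $L$, the remaining excisions and homotopy equivalences fit into
\begin{diagram}
H_{n-i}(St(K)-L,St(L)-L) & \rTo^\cong & H_{n-i}(St(K),St(L)) & \lTo^\cong & H_{n-i}(K,L)\\
\dTo^= & & \dTo & & \dTo\\
H_{n-i}(St(K)-L,St(L)-L) & \rTo^\cong & H_{n-i}(St(K)\cup T,St(L)\cup T) & \lTo^\cong & H_{n-i}(K\cup T,L\cup T),
\end{diagram}
whose two squares commute because all four pair inclusions factor through $(K,L)\hookrightarrow(St(K)\cup T,St(L)\cup T)$; this yields the commutativity asserted by the lemma.

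The main obstacle I foresee is the combinatorial verification that the two dual-cell retractions literally coincide on $X-L$; this is the one spot where the hypothesis $X\cap T\subset S\subset L$ is used in a nontrivial way. The remaining ingredients are direct consequences of Lemma \ref{L: sing indep}, naturality of excision (one must also briefly check that the excision $L\cup T$ in the bottom row of the proof of Proposition \ref{P: GM dual} is valid, i.e., that $L\cup T\subset\mathrm{int}(St(L)\cup T)$ in $St(K)\cup T$, which follows from $L\subset\mathrm{int}(St(L))$ in $X$), and a short diagram chase.
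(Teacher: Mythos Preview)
Your proposal is correct and follows essentially the same route as the paper's proof. The paper writes down the same ladder diagram you describe, beginning with the identification $C_L=C'_{L\cup T}$ and $C_K=C'_{K\cup T}$ (left implicit there but verified carefully by you), passing through the identical Dold isomorphism, and finishing with the excision/homotopy-equivalence square at the bottom.

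Two minor remarks. First, your appeal to Lemma~\ref{L: sing indep} is not actually needed: the paper works directly with the singular sets $S$ and $S\cup T$, and since $(X\cup T)-(S\cup T)=X-S$ the two Dold maps already live on the same manifold without any preliminary adjustment. Your substitution $S\rightsquigarrow L$ is harmless but unnecessary. Second, your ``main obstacle'' about the two dual-cell retractions literally coinciding is not a genuine obstacle: the first isomorphism in Proposition~\ref{P: GM dual} is induced by the inclusion $(C_L,C_K)\hookrightarrow(X-L,X-K)$, and since the inclusions are identical on the two sides the cohomology maps agree automatically; the retractions only certify that these inclusion-induced maps are isomorphisms.
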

\begin{proof}
This time we use  the commutative diagram
{\footnotesize
\begin{diagram}
H^i(C_L,C_K)&\rTo^=& H^i(C_L,C_K)\\
\dTo^{\mf D}&&\dTo^{\mf D}\\
H_{n-i}((X-{S})-C_K, (X-{S})-C_L)&\rTo^=&H_{n-i}(((X\cup T)-(S\cup T))-C_K, ((X\cup T)-(S\cup T))-C_L)\\
\dTo^=&&\dTo^=\\
H_{n-i}((X-C_K)- {S}), (X-C_L)-{S})) &\rTo^= &H_{n-i}(((X\cup T)-C_K)- (S\cup T)), ((X\cup T)-C_L)-(S\cup T))\\
\dTo^=&&\dTo^=\\
H_{n-i}(St_X(K)-{S}, St_X(L)-{S})&\rTo^=&H_{n-i}(St_{X\cup T}(K\cup T)-(S\cup T), St_{X\cup T}(L\cup T)-(S\cup T))\\
\dTo&&\dTo\\
H_{n-i}(St_X(K), St_X(L))&\rTo&H_{n-i}(St_{X\cup T}(K\cup T), St_{X\cup T}(L\cup T))\\
\uTo^\cong&&\uTo^\cong\\
H_{n-i}(K,L)&\rTo^\cong&H_{n-i}(K\cup T, L\cup T).
\end{diagram}}

Here the subscript on $St$ indicates in which space the star is taken. 
\end{proof}

The next lemma shows how the duality isomorphism $\D$ interacts with the connecting morphisms:

\begin{lemma}\label{L: GM dual boundary}
Let $(X,{S})$ be a compact  PL space pair such that $X-{S}$ is an oriented $n$-dimensional manifold, and let $J\subset L\subset K\subset X$ be compact PL subspaces such that $S\subset J$. Then the following diagram commutes up to $(-1)^n$:

\begin{diagram}
H^i(X-L,X-K)&\rTo^{d^*}&H^{i+1}(X-J,X-L)\\
\dTo^\D&&\dTo^\D\\
H_{n-i}(K,L)&\rTo^{\bd_*}&H_{n-i-1}(L,J).
\end{diagram} 
\end{lemma}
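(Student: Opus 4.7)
The plan is to run the argument entirely by diagram chase, stacking the long exact sequences of the triples on the two sides and showing that at each stage in the composite defining $\D$ the connecting morphism is preserved (exactly) by naturality, except at the single Dold step, where Proposition \ref{P: Dold boundary} contributes the sign $(-1)^n$.

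First I would fix, once and for all, a triangulation $T$ of $X$ in which $S$, $J$, $L$, and $K$ are each full subcomplexes; this is possible by the same appeal to \cite[Addendum 2.12]{RS} and \cite[Lemma 3.3]{RS} as in the proof of Proposition \ref{P: GM dual}. With this triangulation I obtain nested complement complexes $C_K\subset C_L\subset C_J$ and nested stars $St(K)\subset St(L)\subset St(J)$, and the containment $S\subset J\subset L\subset K$ guarantees that the hypothesis ``$S$ contained in the smaller subspace'' is satisfied at every stage. Then the composite of Proposition \ref{P: GM dual} unfolds as a sequence of maps of triples
$$(X-J,X-L,X-K)\rightsquigarrow(C_J,C_L,C_K)\xrightarrow{\mf D}((X-S)-C_J,(X-S)-C_L,(X-S)-C_K)\rightsquigarrow(St(J),St(L),St(K))\leftrightsquigarrow(J,L,K),$$
where each $\rightsquigarrow$ is either a homotopy equivalence induced by inclusion or an excision (of $S$), and the Dold map $\mf D$ is applied triplewise.

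Next I would stack the long exact sequences of each of these triples to form a ladder. The connecting morphism $d^*$ (respectively $\bd_*$) is natural under maps of triples, so every square in the ladder that is induced by inclusions, excisions, or equalities of sets commutes on the nose. The only non-routine square is the Dold one, which compares $d^*\colon H^i(C_L,C_K)\to H^{i+1}(C_J,C_L)$ with $\bd_*\colon H_{n-i}((X-S)-C_K,(X-S)-C_L)\to H_{n-i-1}((X-S)-C_L,(X-S)-C_J)$; here Proposition \ref{P: Dold boundary} applies directly (with the manifold $X-S$ in place of $M$ and the compact subsets $C_K\supset C_L\supset C_J$ in place of $L\subset K\subset J$) and tells us that this square commutes up to exactly the sign $(-1)^n$.

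Finally, composing the commuting squares of the ladder yields the claimed diagram, commutative up to $(-1)^n$, since the single Dold square is the unique source of a sign. I expect the main obstacle to be purely bookkeeping: writing the composite of Proposition \ref{P: GM dual} in a form in which each stage is visibly a map of triples (in particular, checking that the excision of $S$ is compatible with all three subspaces simultaneously and with $d^*$), and then tracking the indices through the five intermediate triples. Once the ladder is laid out, naturality of the long exact sequence of a triple and one invocation of Proposition \ref{P: Dold boundary} finish the proof.
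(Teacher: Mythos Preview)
Your proposal is correct and is exactly the argument the paper gives: form the ladder diagram coming from the composite defining $\D$ in Proposition~\ref{P: GM dual}, use naturality of the connecting morphisms of the long exact sequence of a triple at each inclusion/excision/homotopy-equivalence step, and invoke Proposition~\ref{P: Dold boundary} for the single Dold square to pick up the sign $(-1)^n$. The paper's proof states precisely this in two sentences; you have spelled out the same ladder more explicitly. One small bookkeeping slip to watch: the Dold step reverses inclusions, so the triple on the homology side should be $((X-S)-C_K,(X-S)-C_L,(X-S)-C_J)=(St(K)-S,St(L)-S,St(J)-S)$, with $St(J)\subset St(L)\subset St(K)$ rather than the reverse.
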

\begin{proof}
Consider the definition of the duality map of Proposition \ref{P: GM dual} and the evident ladder diagram connecting the maps of the definition for the pair $(K,L)$ with that for the pair $(L,J)$ via boundary/coboundary maps. It is clear from the standard naturality of the connecting maps in the long exact sequences that such a diagram commutes in all squares with the possible exception of the square involving the Dold duality map. But this square commutes up to $(-1)^n$ by Proposition \ref{P: Dold boundary}. 
\end{proof}

\section{PL chains}\label{S: chains}

In this section, we first review some fundamentals concerning PL chains and then review and develop some 
 useful identifications between groups of PL chains and certain homology groups. 

\subsection{PL chains}\label{S: PL chains}

Let $X$ be a locally compact PL space. Any triangulation $T$ of $X$ is locally finite \cite[Lemma 2.6]{MK}.   Let $c^{T,\infty}_i(X)$ be the group of locally finite $i$-chains on $X$ with respect to the triangulation $T$, i.e.\ the elements of  $c^{T,\infty}_i(X)$ are formal sums $\sum_\sigma a_\sigma \sigma$, where the sum is over \emph{all} oriented\footnote{More precisely,  each simplex comes equipped with two orientations $(\sigma,o)$ and $(\sigma,-o)$, and we impose in $c^{T,\infty}_i(X)$ the relation $-(\sigma,o)=(\sigma,-o)$. For the sake of writing chains as sums, we follow standard practice and choose the notation ``$\sigma$'' to denote a simplex with one of its orientations, arbitrarily chosen, and then allow the simplex to appear in the sum expression only with this orientation.}  $i$-simplices $\sigma$ of $X$ in $T$ and each $a_\sigma\in \Z$. These are called \emph{locally finite chains} or \emph{chains with closed support} or \emph{Borel-Moore chains}. The local-finiteness ensures that the boundary map is well defined in the usual way (see \cite{BoHab, GM2} or \cite[Section 4.1.3]{BaIH}), and thus $c^{T,\infty}_*(X)$ is a chain complex with homology groups $H^{T,\infty}_*(X)$. Relative chain complexes and relative homology are defined in the obvious way. If $T'$ is a subdivision of $T$, there is a subdivision map $c^{T,\infty}_*(X)\to c^{T',\infty}_*(X)$ that (formally) takes each $i$-simplex to the sum of the $i$-simplices contained in it with compatible orientations. 
We let $C^{\infty}_*(X)$ be the direct limit of the $c^{T,\infty}_i(X)$ over all compatible  triangulations of $X$ with maps induced by the subdivision maps. The corresponding homology groups are $H^{\infty}_*(X)$. 

Given a chain $\xi\in C_i^{\infty}(X)$, its support $|\xi|$ is defined as follows: write $\xi$ as an element  $\sum_\sigma a_\sigma \sigma\in c^{T,\infty}_i(X)$ for some triangulation $T$ and let $|\xi|$ be the union of the $i$-simplices $\sigma$ such that $a_\sigma\neq 0$. This definition is independent of the choice of $T$. If $\xi\in C_i^{\infty}(X,Y)$, for $Y$ a PL subspace of $X$, then we can again
write a representative for  $\xi$ as an element  $\sum_\sigma a_\sigma \sigma\in c^{T,\infty}_i(X)$ for some triangulation $T$ for which $Y$ is a subcomplex, then we let $|\xi|$ be the union of the  $i$-simplices $\sigma$ not contained in $Y$ and such that $a_\sigma\neq 0$. In other words, $|\xi|$ is the support of the minimal chain representing $\xi$ in $C_i^{\infty}(X,Y)$.

If we let $c^{T}_*(X)$ be the usual simplicial chain complex in which each chain is a finite sum of simplices with coefficients, then $c^{T}_*(X)\subset c^{T,\infty}_*(X)$. Taking subdivisions we obtain the PL chain complex with compact supports,  $C_*(X)\subset C_*^\infty(X)$.

\subsection{Useful lemmas}

By  \cite[\S 2.1]{GM2} and \cite[\S 1]{BoHab}, we have the following useful isomorphism. A detailed proof can be found in   \cite[Lemma 4]{McC} (compare also  \cite[\S 1.2]{GM1}) for the corresponding result concerning compactly supported chains, though the proof in the case of locally finite chains  is identical:

\begin{lemma}\label{L: useful}
Let $X$ be a PL space and let $B\subset A$ be closed PL subspaces of $X$ such that $\dim(A)=p$ and $\dim(B)<p$. Then 
\begin{enumerate}
\item there is an isomorphism  $\alpha_{A,B}$ from $H^{\infty}_p(A,B)$ to the abelian group
$$\mf C_p^{A,B}=\{\xi\in C^{\infty}_p(X)\mid |\xi|\subset A, |\bd \xi|\subset B\},$$ 

\item the following diagram commutes:
\begin{diagram}
H^{\infty}_p(A,B)&\rTo^{\alpha_{A,B}} &\mf C_p^{A,B}\\
\dTo^{\bd _*}&&\dTo^\bd\\
H^{\infty}_{p-1}(B)&\rTo^{\alpha_{B,\emptyset}} &\mf C_{p-1}^{B,\emptyset}\\
\end{diagram}

\end{enumerate}
\end{lemma}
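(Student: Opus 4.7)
The plan is to construct $\alpha_{A,B}$ by unwinding the definitions of the relative chain complex and exploiting the dimension hypotheses so that the usual quotients defining relative homology collapse to nothing. The one substantive ingredient is the identification, for any closed PL subspace $Y\subset X$, that the chains in $C^\infty_p(X)$ with support in $Y$ are exactly the chains of $C^\infty_p(Y)$. To see this I would fix a triangulation of $X$ in which $Y$ appears as a subcomplex (permissible since $Y$ is closed PL), observe that a chain with support in $Y$ involves only simplices of that subcomplex, and check that this correspondence commutes with subdivision so it survives the passage to the direct limit:
\[
\{\xi \in C^\infty_p(X) : |\xi|\subset Y\} \;\cong\; C^\infty_p(Y).
\]
Local finiteness is preserved because $Y$ is closed in $X$.

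With this identification in hand, the dimension hypotheses do the rest. From $\dim A = p$ and $\dim B < p$ one gets $C^\infty_{p+1}(A) = C^\infty_{p+1}(B) = C^\infty_p(B) = 0$, so $C^\infty_{p+1}(A,B) = 0$ and $C^\infty_p(A,B) = C^\infty_p(A)$. Hence every class in $H^\infty_p(A,B)$ has a \emph{unique} representative $\xi \in C^\infty_p(A)$ with $\bd \xi \in C^\infty_{p-1}(B)$; under the support identification this representative lies in $\mf C_p^{A,B}$. Reading the correspondence in reverse yields the inverse, and since the map is visibly additive, $\alpha_{A,B}$ is an isomorphism of abelian groups.

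For part (2), the connecting homomorphism $\bd_*$ sends $[\xi] \in H^\infty_p(A,B)$ to the class of $\bd \xi$ in $H^\infty_{p-1}(B)$; since the relative lift of $\xi$ is just $\xi$ itself under the identification $C^\infty_p(A,B) = C^\infty_p(A)$, no ambiguity enters. Because $C^\infty_p(B) = 0$ there are no $p$-boundaries in $B$, so $H^\infty_{p-1}(B)$ coincides with the group of $(p-1)$-cycles of $B$, and $\alpha_{B,\emptyset}$ carries this class to $\bd\xi$ regarded in $C^\infty_{p-1}(X)$. Going around the diagram the other way, taking the chain boundary after $\alpha_{A,B}$, gives the same element $\bd\xi$. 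Commutativity follows.

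The whole argument is essentially bookkeeping; the only genuinely nontrivial step is the support identification in the first paragraph, and even that rests on standard facts about PL chains, namely the availability of triangulations compatible with any finite collection of closed PL subspaces and the compatibility of supports with subdivision. No approximation or general-position argument is needed for this lemma, and the dimension collapse means no cycle-modulo-boundary reckoning is required either.
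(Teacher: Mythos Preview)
Your proposal is correct and follows essentially the same approach as the paper's proof: both identify $\mf C_p^{A,B}$ with the set of relative $p$-cycles in $C^\infty_*(A,B)$ and use the dimension hypotheses to see that the relative $p$-boundaries vanish, so that $H^\infty_p(A,B)$ equals the relative cycles on the nose. The paper's proof is simply more terse, while you spell out the support identification $\{\xi\in C^\infty_p(X):|\xi|\subset Y\}\cong C^\infty_p(Y)$ and the specific vanishing $C^\infty_{p+1}(A)=C^\infty_p(B)=0$ that the paper leaves implicit.
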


\begin{proof}
As observed in \cite{McC},  $H^{\infty}_p(A,B)$ is  the $p$–th homology of the
complex $C^{\infty}_*(A, B)$, which is the quotient of the relative cycles
by the relative boundaries. But the set $\mf C_p^{A,B}$ is precisely the set of relative cycles,
while the set of relative boundaries is $0$ by the dimension hypothesis. The second part of the lemma follows by chasing the definitions. 
\end{proof}

In particular then, as noted in \cite[\S 1]{BoHab}, a chain $\xi\in C^{\infty}_p(X)$ is completely described by $|\xi|$, $|\bd \xi|$, and the class represented by $\xi$ in $H_p^\infty(|\xi|,|\bd \xi|)$.

\begin{remark}\label{R: useful coeff}
Suppose  $\xi\in \mf C_p^{A,B}$ and $T$ is a triangulation of $X$ such that $A$ and $B$ are subcomplexes and $\xi$ can be represented as an element of $c^{T,\infty}_i(X)$. Let $\sigma$ be an (oriented) $p$-simplex of $A$. Then the coefficient of $\sigma$ in the representation of $\xi$ in $c^{T,\infty}_i(X)$ can be recovered as the image of   $\alpha_{A,B}^{-1}(\xi)$ in $H_{p}^\infty(A,A-int(\sigma))\cong H_p(\sigma,\bd \sigma)\cong \Z$, the first isomorphism  by excision. The argument in this context for excision is identical to the standard simplicial proof, e.g.\ \cite[Theorem 9.1]{MK}. The claim follows by letting the   representation of $\xi$ in $c^{T,\infty}_i(X)$ also serve as the chain representative for $\alpha_{A,B}^{-1}(\xi)$ and then its image in $H_{p}^\infty(A,A-int(\sigma))$. So, in fact,  given an element  $[\xi]\in H^{\infty}_p(A,B)$ and a triangulation $T$ compatible with $A$ and $B$, we can in this way construct a representative of $\alpha_{A,B}([\xi])$ in $c^{T,\infty}_p(A)\subset c^{T,\infty}_p(X)$ by so determining its coefficient at each $p$-simplex of $A$. 
\end{remark}

In our application below, we will need to consider more general scenarios in which $A$ might have arbitrary dimensions but $A-B$ still has dimension $p$. For this we have the following lemma. We let $cl(Y)$ denote the closure of the subspace $Y$ in its ambient space.

\begin{lemma}\label{L: useful coeff big}
Let $X$ be a PL space and let $C\subset B\subset A$ be closed PL subspaces of $X$  such that $\dim(A-B)=p$ and $\dim(B-C)<p$, and let $T$ be a triangulation of $X$ with respect to which $A,B,C$ are subcomplexes. Then: 
\begin{enumerate}
\item  There is an isomorphism  $\bar \alpha_{A,B}$ from $H^{\infty}_p(A,B)$ to the abelian group
$$\bar{\mf{C}}_p^{A,B}=\{\xi\in C^{\infty}_p(X)\mid |\xi|\subset cl(A-B), |\bd \xi|\subset B\}.$$
If   $[\xi]\in H^{\infty}_p(A,B)$ is represented by a simplicial chain $\xi=\sum a_\sigma \sigma$ with $p$-simplices in $T$, then $\bar \alpha_{A,B}([\xi])=\sum_{\sigma\not\subset  B} a_\sigma \sigma$, with the sum being over $p$-simplices of $T$ not contained in $B$ (though necessarily $\sigma \subset A$).

\item If $[\xi]\in H^{\infty}_p(A,B)$ with $\bd\bar \alpha_{A,B}([\xi])=\sum b_\tau \tau$ and $\bd_*:H^\infty_p(A,B)\to H^\infty_{p-1}(B,C)$ is the connecting morphism, then $\bar \alpha_{B,C}(\bd_*([\xi]))=\sum_{\tau\not\subset  C} b_\tau \tau$. 
\end{enumerate}
\end{lemma}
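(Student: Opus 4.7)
The plan is to reduce Part 1 to Lemma~\ref{L: useful} by passing to the subpair $(cl(A-B),\, cl(A-B)\cap B)$. Because $A$ and $B$ are subcomplexes of $T$ and $\dim(A-B)=p$, the closure $cl(A-B)$ is the union of the closed $p$-simplices of $A$ not contained in $B$, together with all their faces; hence $\dim(cl(A-B))=p$, while $cl(A-B)\cap B$ is a union of proper faces of these $p$-simplices and so has dimension strictly less than $p$. Lemma~\ref{L: useful} therefore furnishes an isomorphism $\alpha_{cl(A-B),\,cl(A-B)\cap B}\colon H^\infty_p(cl(A-B),\,cl(A-B)\cap B)\to \mf C_p^{cl(A-B),\,cl(A-B)\cap B}$, and one checks immediately from the definitions that $\mf C_p^{cl(A-B),\,cl(A-B)\cap B}=\bar{\mf C}_p^{A,B}$.

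Next I would identify $H^\infty_p(A,B)$ with $H^\infty_p(cl(A-B),\,cl(A-B)\cap B)$. The cleanest route is to work at the simplicial chain level in $T$: the hypothesis $\dim(A-B)=p$ forces every $(p+1)$-simplex of $A$ to lie in $B$, so $c^{T,\infty}_{p+1}(A)=c^{T,\infty}_{p+1}(B)$, and the relative $p$-boundaries in $c^{T,\infty}_*(A,B)$ vanish. Any relative $p$-cycle $\sum a_\sigma\sigma$ is then equivalent, modulo $c^{T,\infty}_p(B)$, to its ``reduced'' form $\sum_{\sigma\not\subset B}a_\sigma\sigma$, which is supported on $p$-simplices whose interiors lie in $A-B$, and whose boundary remains in $B$. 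Taking the direct limit over subdivisions, the assignment $[\xi]\mapsto \sum_{\sigma\not\subset B}a_\sigma\sigma$ defines $\bar\alpha_{A,B}$ with the explicit formula claimed, and the formula makes injectivity and surjectivity transparent. The resulting map is consistent with $\alpha_{cl(A-B),\,cl(A-B)\cap B}$ via the inclusion-induced isomorphism of relative homologies.

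For Part 2, I would trace $\bd_*[\xi]$ through $\bar\alpha$. The representative $\bar\alpha_{A,B}([\xi])=\sum_{\sigma\not\subset B} a_\sigma\sigma$ produced above is a chain in $c^{T,\infty}_p(A)$ whose boundary $\sum b_\tau\tau$ lies in $c^{T,\infty}_{p-1}(B)$, and by the standard definition of the connecting morphism this boundary represents $\bd_*[\xi]$ in $H^\infty_{p-1}(B,C)$. Applying Part 1 to the pair $(B,C)$---which is legitimate when $\dim(B-C)=p-1$, and vacuous otherwise (if $\dim(B-C)<p-1$, then every $(p-1)$-simplex of $B$ already lies in $C$, so both $H^\infty_{p-1}(B,C)$ and $\bar{\mf C}^{B,C}_{p-1}$ vanish)---the canonical representative is obtained by discarding the terms supported in $C$, yielding $\sum_{\tau\not\subset C}b_\tau\tau$. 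The main obstacles will be bookkeeping: verifying that this reduced chain indeed lies in $\bar{\mf C}^{B,C}_{p-1}$ (its boundary stays in $C$ because the discarded terms already have boundary in $c^{T,\infty}_{p-2}(C)$), that the ``reduction'' procedure is compatible with subdivision so it passes to the direct limit, and dealing cleanly with the dimensional edge case for $B-C$.
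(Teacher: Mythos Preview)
Your proposal is correct and follows essentially the same route as the paper: reduce to Lemma~\ref{L: useful} applied to the pair $(cl(A-B),\,cl(A-B)\cap B)$, identify $\mf C_p^{cl(A-B),\,cl(A-B)\cap B}$ with $\bar{\mf C}_p^{A,B}$, and then trace the explicit representative through the connecting morphism for Part~2. The only cosmetic difference is that the paper invokes simplicial excision to identify $H^\infty_p(A,B)$ with $H^\infty_p(cl(A-B),\,cl(A-B)\cap B)$, whereas you argue directly at the chain level via the vanishing of $c^{T,\infty}_{p+1}(A,B)$; these are the same argument, and your explicit handling of the dimensional edge case $\dim(B-C)<p-1$ is a small improvement in precision.
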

\begin{proof}
Let $\bar A_B$ denote the closure of $A-B$. In the triangulation $T$, this is just the space of $p$-simplices of $A$ that are not contained in $B$. We  have  $\dim(\bar A_B)=p$ and also $\dim(\bar A_B\cap B)<p$, as $\bar A_B\cap B$ consists of those simplices of $B$ that are faces of the simplices of $A$ that are not contained in $B$. Thus, via Lemma \ref{L: useful}, we have an isomorphism between $H^\infty_p(\bar A_B,\bar A_B\cap B)$ and 
$\mf C_p^{\bar A_B,\bar A_B\cap B}$. 
There is also an excision isomorphism $H^\infty_p(\bar A_B,\bar A_B\cap B)\to H^\infty_p(A,B)$. 
Furthermore, $\mf C_p^{\bar A_B,\bar A_B\cap B}=\bar{\mf{C}}_p^{A,B}$ from the definitions, noting that if $\xi$ is a chain in $\bar A_B$, then $\bd \xi$ is in $B$ if and only if it is in $B\cap A_B$. 
We let $\bar \alpha_{A,B}:H^{\infty}_p(A,B) \to \bar{\mf{C}}_p^{A,B}$ be the composition of these isomorphisms.  In addition, if   $[\xi]\in H^{\infty}_p(A,B)$ is represented by  $\xi=\sum a_\sigma \sigma$, it is also represented by $\bar \xi=\sum_{\sigma\not\subset  B} a_\sigma \sigma$, as the piece of $\xi$ contained in $B$ is $0$ in $C^{\infty}_p(A,B)$. But then $\bar \xi\in \bar{\mf{C}}_p^{A,B}=\mf C_p^{\bar A_B,\bar A_B\cap B}$. It follows that $\bar \alpha_{A,B}([\xi])=\bar \xi$ from the proof of Lemma \ref{L: useful}.

For the second part of the lemma, if $[\xi]\in H^{\infty}_p(A,B)$ is represented by the chain  $\xi=\sum a_\sigma \sigma$, then, as above, it is also represented by $\bar \xi=\sum_{\sigma\not\subset  B} a_\sigma \sigma$. So $\bd_*([\xi])$ is represented by $\bd \bar \xi$, which is $\sum b_\tau \tau$ by assumption. The map $\bar \alpha_{B,C}$ thus takes $\bd_*([\xi])$ to $\sum_{\tau\not\subset  C} b_\tau \tau$ by the argument just above.
\end{proof}

\begin{remark}\label{R: useful coeff big}
As in Remark \ref{R: useful coeff}, given $[\xi]\in H_p^\infty(A,B)$ in the setting of Lemma \ref{L: useful coeff big}, as well as a   triangulation $T$ compatible with $A$ and $B$ and a $p$-simplex $\sigma$ of $cl(A-B)$,  we can recover the coefficient of $\sigma$ in the corresponding chain  $\bar \alpha^{A,B}([\xi])$ in $c^{T,\infty}_i(X)$ by looking at the image of $[\xi]$ in $H_{p}^\infty(A,A-int(\sigma))\cong H_p(\sigma,\bd \sigma)\cong \Z$. In this way, given   $[\xi]\in H^{\infty}_p(A,B)$ and a triangulation $T$ compatible with $A$ and $B$, we can construct the unique chain in $c^{T,\infty}_p(\bar A_B)\subset c^{T,\infty}_p(X)$  prescribed by the isomorphisms. 
\end{remark}

\section{Chain-level intersection products on pseudomanifolds}\label{S: pairings}

This section contains our development of the intersection product $$C_*^{\infty}(X,\Sigma)\otimes C_*^{\infty}(X,\Sigma)\supset \mf G_*^{\infty}(X,\Sigma)\xr{\mu} C_*^{\infty}(X,\Sigma)$$ for locally finite chains on not-necessarily-compact pseudomanifolds. Following some preparatory remarks, we define \emph{intersection coefficients} via \emph{local umkehr maps} in Section \ref{S: local umkehr}; these are used to define  a \emph{global umkehr map} in Section \ref{S: umkehr}. Section \ref{S: cross} concerns the chain cross product, and, finally, Section \ref{S: pairing} contains the definition of the intersection product.

The rough idea for constructing intersection products, going back at least to Dold in \cite[Section VIII.13]{Dold} and then used in \cite{McC, GBF18}, is the observation that if $A,B\subset X$ then a point $x\in X$ is in $A\cap B$ if and only if the point $(x,x)\in X\times X$ is contained in $A\times B$. When $A$ and $B$ are replaced by chains $\xi$ and $\eta$, this lets us utilize the well known chain cross product to consider $\xi\times \eta$ in $X\times X$. Then we pull back to $X$ itself via the diagonal map  $\Delta: X\to X\times X$ with $\Delta(x)=(x,x)$. This process is reminiscent of the construction of the cup product. However, as chains behave covariantly we do not have a pullback map $\Delta^*$ but must instead 
utilize a version of the  \emph{umkehr} or \emph{transfer} map $\Delta_!$, versions of which can be found for manifolds in \cite[Section VIII.10]{Dold} or \cite[Definition 6.11.2]{BRTG}. 

The cross the product will be reviewed below. We begin instead by outlining the construction to follow in 
Sections \ref{S: local umkehr} and \ref{S: umkehr}
of the umkehr chain  map $$\Delta_!:C_*^{\Delta,\infty}((X,\Sigma)\times (X,\Sigma))\to C^\infty_{*-n}(X,\Sigma),$$ generalizing previous constructions to not-necessarily-compact pseudomanifolds.
We use here the notation for products of pairs: $$(A,B)\times (C,D)=(A\times C, (A\times D)\cup (B\times C)).$$
Throughout this section, our $X$ is an oriented $n$-dimensional PL stratified pseudomanifold, not necessarily compact, and $C_*^{\Delta,\infty}((X,\Sigma)\times (X,\Sigma))$ is the subcomplex of chains of $C_*^{\infty}((X,\Sigma)\times (X,\Sigma))$ that are in general position with respect to the map $\Delta$.
 Explicitly, this means the following:

 \begin{definition}\label{D: Delta}
Let $X$ be an oriented $n$-dimensional PL stratified pseudomanifold. For a set $A\in X\times X$, let $A'=\Delta^{-1}(A)\subset X$. Then  $C_i^{\Delta,\infty}((X,\Sigma)\times (X,\Sigma))\subset C_i^{\infty}((X,\Sigma)\times (X,\Sigma)))$ is the subcomplex  consisting of those chains $\xi$ such that 
\begin{enumerate}
\item $\dim(|\xi|'-\Sigma)\leq i-n$

\item $\dim(|\bd\xi|'-\Sigma)\leq i-n-1$.
\end{enumerate}

Observe that this is indeed a chain subcomplex, as if $\xi_1,\xi_2\in C_i^{\Delta,\infty}((X,\Sigma)\times (X,\Sigma))$ then so is $\xi_1+\xi_2$, as $|\xi_1+\xi_2|\subset |\xi_1|\cup |\xi_2|$, so $|\xi_1+\xi_2|'\subset |\xi_1|'\cup |\xi_2|'$ and similarly for $\bd(\xi_1+\xi_2)$. The second condition guarantees that if $\xi\in C_i^{\Delta,\infty}((X,\Sigma)\times (X,\Sigma))$ then so is $\bd \xi$. 
\end{definition}

\begin{remark}\label{R: weak GP}
These are weaker conditions than what were required in \cite[Definition 4.2]{GBF18}, where there were also requirements on the dimensions of intersection with the singular strata in both $X\times X$ and $X$. Our use here of the relative chain complex ultimately makes these additional assumptions unnecessary for the purposes of defining an intersection product on relative chains. However, we will impose some additional constraints below in Section \ref{S: IC pairing} when working with intersection chains in order to properly manage the perversities involved.

Also, recalling that two respectively $a$- and $b$-dimensional PL subspaces $A$ and $B$ of an $m$-dimensional PL manifold $M$ are in general position if $\dim(A\cap B)\leq a+b-m$,
we note that the dimension $i-n$ in Definition \ref{D: Delta} is precisely the intersecton dimension we would expect if we had  $i$- and $n$-dimensional subspaces (in this case $|\xi|$ and the diagonal $\Delta(X)$) in general position in a $2n$-dimensional manifold. 
\end{remark}

So let $\xi\in C_i^{\Delta,\infty}((X,\Sigma)\times (X,\Sigma)))$. To define the map $\Delta_!$, we need to say what $\Delta_!(\xi)\in  C^\infty_{i-n}(X,\Sigma)$ is. Due to our focus on relative chains, we only care about simplices  that are not contained in the singular locus $\Sigma$, so to define $\Delta_!(\xi)$ in a given triangulation $T$ of $X$, it suffices to prescribe the coefficient of $\Delta_!(\xi)$ for each simplex of $T$ not contained in $\Sigma$. More precisely, given a sufficiently refined $T$, we define a simplicial chain $\Delta^T_!(\xi)$. 
  We then show that our definition is independent of the choice of triangulation in order to 
  obtain a well-defined  PL chain $\Delta_!(\xi)$. 
  More explicitly, we show that if $T'$ is a subdivision of $T$, then the image of $\Delta^T_!(\xi)$ under the subdivision map is $\Delta^{T'}_!(\xi)$. As any two triangulations of $X$ have a common subdivision (apply \cite[Theorem 3.6.C]{HUD} to the identity map), this suffices to determine a PL chain $\Delta_!(\xi)$.  
  
In fact, we will use triangulations of $X$ of the following form: Suppose $\mc T$ is some triangulation of $X\times X$ 
compatible with the stratification (letting $(X\times X)^i=\cup_{j+k=i}X^j\times X^k$)  and with respect to which $|\xi|$ and $\Delta(X)$ are subcomplexes (apply \cite[Theorem 3.6.C]{HUD} inductively to obtain such a triangulation). Let $T$ be the induced triangulation on $X$ corresponding to the subcomplex triangulation of $\Delta(X)$. Note that using only such triangulations on $X$ is not a strong restriction, as any arbitrary triangulations $T$ of $X$ and $\mc T$ of  $X\times X$ have subdivisions $T'$ and $\mc T'$ such that $T'$ is a subcomplex of $\mc T'$; this follows from \cite[Theorem 3.6.C]{HUD}, as the diagonal inclusion is proper.

So now to define $\Delta^T_!(\xi)$, let $\sigma$ be an $i-n$ simplex of $T$ with  $\sigma\not\subset \Sigma$.
 We will define an \emph{intersection  coefficient} $I_\sigma(\xi)\in \Z$, which ultimately will not depend on the particular choice of triangulation so long as $\sigma$ is a simplex of the triangulation and the triangulation is obtained as above. Then we will define $\Delta^T_!(\xi)= \sum_{\sigma\in T, \sigma\not\subset \Sigma} I_\sigma(\xi)\sigma$, the sum over $i-n$ simplices.  Here, the referenced ``intersection'' justifying the name ``intersection coefficient'' is the intersection of $\xi$ with the diagonal $\Delta(X)$.

We carry out this program in the next two subsections.

\subsection{Local umkehr maps and intersection coefficients}\label{S: local umkehr}

To motivate the definition that follows, recall that the classical diagonal transfer map for a compact oriented $n$-manifold $M$ would have the form $\Delta_!:H_i(M\times M)\to H_{i-n}(M)$ defined by the composition 
$$H_i(M\times M)\xr{D_{M\times M}^{-1}}H^{2n-i}(M\times M)\xr{\Delta^*}H^{2n-i}(M)\xr{D_M}H_{i-n}(M),$$
with $D_{M\times M}$ and $D_M$ being Poincar\'e duality maps (see, for example, \cite[Definition 6.11.2]{BRTG}). The map $\Delta_!$ we seek to define will have the same general character, but as we do not work with manifolds and as our input homology groups are not those of the whole space but rather those representing PL chains, we will need to use a version of the Goresky-MacPherson duality map $\D$ instead of $D$. Furthermore, as we do not work on compact spaces, we begin instead with a local construction that determines intersection coefficients by restricting to compact neighborhoods of the simplex of interest $\sigma$. We will eventually obtain a global map $\Delta_!$ by piecing together this local data.

So to define the intersection coefficient $I^T_\sigma(\xi)$, let $T$, $\xi$, and $\sigma$ all be given as above.
Let $Z$ be any finite union of $n$-simplices of $T$ such that the interior of $\sigma$ is contained in the interior of $Z$; this will be our compact neighborhood of $\sigma$. Such a $Z$ exists; for example we could choose $Z$ to be the union of all $n$-simplices of $T$ containing $\sigma$ as a face. For a chain $\zeta$ in $X$, let $|\zeta|_Z=|\zeta|\cap Z$, and, similarly, for a chain $\zeta$ in $X\times X$, let $|\zeta|_{Z\times Z}=|\zeta|\cap (Z\times Z)$. Let $\Sigma_Z=\Sigma\cap Z$. These notations will be useful for restricting objects to the neighborhoods $Z$ or $Z\times Z$.

Let $D_Z$ be the union of all $n$-simplices of $T$ not contained in $Z$, and let $S_Z=Z\cap D_Z$.
So $D_Z$ is morally the complement of $Z$ and $S_Z$ is the boundary between $Z$ and $D_Z$.
We also let $D_{Z\times Z}=(X\times D_Z)\cup (D_Z\times X)$.  Let $J_Z=\Sigma_Z\cup S_Z$, which we will see is essentially the complement of the manifold points in $Z$, and let $J_{Z\times Z}=(Z\times J_Z)\cup(J_Z\times Z)$, which will play the same role for $Z\times Z$. Note that $\Delta^{-1}(J_{Z\times Z})=J_Z$. We also recall that if $A\in X\times X$, then we let $A'=\Delta^{-1}(A)\cong A\cap \Delta(X)$. 

 We will show below that 
 $Z-J_Z$ and $Z\times Z-J_{Z\times Z}$ are indeed oriented $n$-dimensional manifolds, with the orientations inherited from $X$ and $X\times X$, respectively. Thus $(Z,J_Z)$ and $(Z\times Z,J_{Z\times Z})$ each satisfy the first hypothesis of Proposition \ref{P: GM dual}.

\begin{definition}\label{D: int coeff Z}
Given the assumptions above, we define the  \emph{$(Z,T)$-intersection coefficient} $I^{Z,T}_\sigma(\xi)$ as follows for an $i-n$ simplex $\sigma$, $\sigma \not\subset \Sigma$, of the triangulation $T$: 
If $\sigma\not\subset \Delta^{-1}(|\xi|)=|\xi|'$, then define  $I_{\sigma}^{Z,T}(\xi)$ to be $0$. Otherwise,  let $[\xi]\in H_i^{\infty}(|\xi|,|\bd \xi|)$ represent $\xi$  under the isomorphism $\alpha^{-1}_{|\xi|,|\bd \xi|}$ of Lemma \ref{L: useful} and  let $I^{Z,T}_\sigma(\xi)$   be the image of $[\xi]$ under the following composition:

\begin{align*}
H_i^{\infty}(|\xi|,|\bd \xi|)
&\to H_i^{\infty}(|\xi|\cup J_{Z\times Z}\cup D_{Z\times Z},|\bd \xi|\cup J_{Z\times Z}\cup D_{Z\times Z})\\
&\xleftarrow{\cong}H_i(|\xi|_{Z\times Z}\cup J_{Z\times Z}, |\bd \xi|_{Z\times Z}\cup J_{Z\times Z})\\
&\xleftarrow{\D} H^{2n-i}(Z\times Z-(|\bd \xi|_{Z\times Z}\cup J_{Z\times Z}),Z\times Z-(|\xi|_{Z\times Z}\cup J_{Z\times Z}))\\
&\xr{\Delta^*}  H^{2n-i}(Z-|\bd \xi|_Z'\cup J_Z,Z-(|\xi|_Z'\cup J_Z))\\
&\xr{\D} H_{i-n}(|\xi|_Z'\cup J_Z,|\bd \xi|_Z'\cup J_Z)\\
&\to H_{i-n}((|\xi|'_Z\cup J_Z,(|\xi|_Z'\cup J_Z)-int(\sigma))\\
&\xleftarrow{\cong}  H_{i-n}(\sigma, \bd \sigma)\\
&\cong \Z.
\end{align*}
The last isomorphism is determined by the orientation of $\sigma$. 

We will show below that this construction does not depend on the choice of $Z$ or the triangulation $T$, given the previous constraints, and so we can define the \emph{intersection coefficient} $I_{\sigma}(\xi)$ to be $I^{Z,T}_{\sigma}(\xi)$ for any choice of $Z$ or of $T$ containing $\sigma$. 
\end{definition}

\paragraph{Well-definedness.}
Let us first observe that $I^{Z,T}_{\sigma}(\xi,\eta)$  is  well defined: 
\begin{enumerate}
\item The first map is induced by inclusions. 

\item The second map is  an excision isomorphism.
To see that this is a valid excision, we note that 
\begin{align*}
(|\bd \xi|\cup J_{Z\times Z}\cup D_{Z\times Z})&\cap (|\xi|_{Z\times Z}\cup J_{Z\times Z})\\
&=|\bd \xi|_{Z\times Z}\cup J_{Z\times Z},
\end{align*}
as $D_{Z\times Z}\cap |\xi|_{Z\times Z}\subset D_{Z\times Z}\cap (Z\times Z)=(Z\times S_Z)\cup (S_Z\times Z)\subset J_{Z\times Z}$. So the excision removes $D_{Z\times Z}-((Z\times Z)\cap D_{Z\times Z})$.
As $Z\times Z$ can be triangulated as a finite subcomplex of $X\times X$, the argument for excision is identical to the standard simplicial homology excision proof, e.g.\ \cite[Theorem 9.1]{MK}. 
Finally, as $|\xi|_{Z\times Z}\cup J_{Z\times Z}$ and $|\bd \xi|_{Z\times Z}\cup J_{Z\times Z}$ are compact, in particular they will be finite subcomplexes in any triangulation of $Z\times Z$, we have $H^\infty_i(|\xi|_{Z\times Z}\cup J_{Z\times Z}, |\bd \xi|_{Z\times Z}\cup J_{Z\times Z})=H_i(|\xi|_{Z\times Z}\cup J_{Z\times Z}, |\bd \xi|_{Z\times Z}\cup J_{Z\times Z})$, which can from here on also be identified with the singular homology group. 

\item The third map and fifth maps are the isomorphisms of Proposition \ref{P: GM dual}, once we show that $Z-J_Z$ and $Z\times Z-J_{Z\times Z}$ are manifolds. 

\item The fourth map is the pullback by the diagonal map. 

\item\label{I: gp sigma} The sixth map is induced by inclusion, noting that, by our assumptions, $\sigma$ is contained in $|\xi|'_Z$ and the interior of $\sigma$ is not contained in $|\bd \xi|'_Z\cup J_Z$. In fact, by assumption of general position, $\dim(|\xi|'_Z)\leq i-n$ and $\dim(|\bd\xi|'_Z)\leq i-n-1$. So, in particular, if $\sigma$ is an $i-n$ simplex contained in $|\xi|'_Z$, then $\sigma$ cannot be contained in $|\bd\xi|'_Z$. It is also outside of $S_Z$ and $\Sigma_Z$ by assumption.

\item The seventh map is again an excision. 

\item The last isomorphism is standard and determined by the orientation of $\sigma$. 
\end{enumerate}

To finish showing that the intersection coefficient $I^{Z,T}_\sigma(\xi)$ is well defined, we must show that $Z-J_Z$ and $Z\times Z-J_{Z\times Z}$ are manifolds, as claimed. We do this now:

\begin{lemma}
$Z-J_Z$ and $Z\times Z-J_{Z\times Z}$  are oriented manifolds with orientations inherited respectively from $X$ and $X\times X$.
\end{lemma}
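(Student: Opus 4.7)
The plan is to reduce both claims to the observation that $Z - J_Z$ is an open subset of the oriented $n$-manifold $X - \Sigma$. The crucial preliminary step is the set-theoretic identity $X = Z \cup D_Z$: since $X - X^{n-1} = X - \Sigma$ is dense in $X$ by the pseudomanifold axiom, the closure of the union of the open $n$-simplices of $T$ must be all of $X$; but by construction every closed $n$-simplex of $T$ is either a face of $Z$ or a face of $D_Z$, so the union of all closed $n$-simplices is exactly $Z \cup D_Z$.

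Granting this, $Z - S_Z = Z - (Z \cap D_Z) = (Z \cup D_Z) - D_Z = X - D_Z$, which is open in $X$ because $D_Z$ is a closed PL subspace (being a locally finite union of closed simplices). Since $J_Z = S_Z \cup \Sigma_Z$ and $\Sigma_Z = \Sigma \cap Z$, we obtain
\[
Z - J_Z = (Z - S_Z) - \Sigma = (X - D_Z) - \Sigma = (X - \Sigma) - D_Z,
\]
which is an open subset of the oriented $n$-manifold $X - \Sigma$, and therefore an oriented $n$-manifold with the restricted orientation.

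For the product assertion, I would first verify directly that a point $(x,y) \in Z \times Z$ fails to lie in $(Z \times J_Z) \cup (J_Z \times Z)$ exactly when both $x \notin J_Z$ and $y \notin J_Z$; this yields the set identity $Z \times Z - J_{Z \times Z} = (Z - J_Z) \times (Z - J_Z)$. By the previous step, this is an open subset of $(X - \Sigma) \times (X - \Sigma)$. Under the product stratification $(X \times X)^i = \bigcup_{j+k=i} X^j \times X^k$, this latter space is exactly $(X \times X) - (X \times X)^{2n-1}$, i.e.\ the manifold locus of $X \times X$, and it carries the product orientation inherited from the orientation of $X - \Sigma$. Restriction of this orientation to the open subset $Z \times Z - J_{Z \times Z}$ provides the desired orientation.

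The only step that requires genuine argument is the identity $X = Z \cup D_Z$, which hinges on the density of the regular part guaranteed by the pseudomanifold definition; the remainder is set-theoretic bookkeeping together with the standard fact that open subsets of oriented manifolds inherit orientations. I do not foresee a real obstacle here.
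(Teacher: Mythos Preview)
Your argument is correct and takes a genuinely different route from the paper's. The paper argues point by point: for $x\in Z-J_Z$ it notes $x\in X-\Sigma$, takes a Euclidean neighborhood inside the star of the simplex $\tau$ containing $x$, and then shows by contradiction that this star must already lie in $Z$ (otherwise some $n$-simplex through $\tau$ would lie in $D_Z$, forcing $\tau\subset S_Z$). You instead establish the set identity $Z-J_Z=(X-\Sigma)-D_Z$ directly, which exhibits $Z-J_Z$ as an open subset of the oriented $n$-manifold $X-\Sigma$ in one stroke. Your approach is more economical and makes the orientation statement immediate; the paper's approach is more hands-on but ends up using the same underlying fact (``every simplex of $X$ is contained in some $n$-simplex'') inside its contradiction argument.

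One small comment: your justification of $X=Z\cup D_Z$ is slightly terse. Density of $X-\Sigma$ alone does not imply density of the union of open $n$-simplices; you also need that $X-\Sigma$ is an $n$-manifold, so that any point of $X-\Sigma$ whose carrier simplex $\tau$ had no $n$-dimensional coface would have a star neighborhood of dimension $<n$, which is impossible. With that observation, $X-\Sigma$ lies in the union of closed $n$-simplices, and then density gives $X=Z\cup D_Z$. The paper simply asserts this fact (``as $X$ is a pseudomanifold, every simplex of $X$ is contained in some $n$-simplex''), so this is not a serious gap. Both proofs handle the product case identically via $Z\times Z-J_{Z\times Z}=(Z-J_Z)\times(Z-J_Z)$.
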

\begin{proof}
We begin with $Z-J_Z=Z-(\Sigma_Z\cup S_Z)$. Once we have shown that it is a manifold, the orientability will follow because then $Z-(\Sigma_Z\cup S_Z)$ must be a submanifold of $X-\Sigma_X$, which is oriented.

Let $x\in Z-(\Sigma_Z\cup S_Z)$. We must show that $x$ has an $n$-dimensional Euclidean neighborhood. Clearly this is true at any point that is interior to an $n$-simplex of $Z$. So suppose that $x$ is contained in the interior of a simplex $\tau$ of $Z$ in the triangulation $T$ with respect to which $Z$ is defined and that $\dim(\tau)<n$. Thinking of $x$ as a point of $X$, we have $x\in X-\Sigma_X$ and so $x$ has a Euclidean neighborhood, say $U$, in $X$, and we may suppose $U$ is contained in a star neighborhood of $\tau$ in $X$ (in the triangulation $T$). If the star neighborhood of $\tau$ is contained in $Z$, then $U$ is a Euclidean neighborhood of  $x$ in $Z$. If not, then there is a simplex $\delta$ of $X$ that contains $\tau$ as a face and that is not contained in $Z$.  But, as $X$ is a pseudomanifold, every simplex of $X$ is contained in some $n$-simplex, and this implies there is some $n$-simplex $\gamma$, having $\delta$ and $\tau$ as faces and such that $\gamma$ is not contained in $Z$ (if any $n$-simplex having $\delta$ as a face were contained in $Z$, then $\delta$ would be contained in $Z$). So then $\tau$ is contained in $Z$ and in $D_Z$, which was defined as the union of $n$-simplices not contained in $Z$. So $\{x\}\subset \tau\subset S_Z$, a contradiction. 

Thus all points  $x\in Z-(\Sigma_Z\cup S_Z)$ have $n$-dimensional Euclidean neighborhoods, and $Z-(\Sigma_Z\cup S_Z)$ is a subspace of a Hausdorff space, so it is a manifold.

For $Z\times Z-J_{Z\times Z}$, we recall that $J_{Z\times Z}=(Z\times J_Z)\cup(J_Z\times Z)$ so $Z\times Z-J_{Z\times Z}=(Z-J_Z)\times (Z-J_Z)$, and the claim follows. 
\end{proof}

\begin{remark}\label{R: subdiv}
Note that while the triangulation $T$ is utilized in our selection of $\sigma$ and $Z$, once this simplex and subspace have been established,  nothing in the chain of maps defining $I^{Z,T}_{\sigma}(\xi)$ depends on the specific triangulation. Thus, we can define $I^{Z}_{\sigma}(\xi)=I^{Z,T}_{\sigma}(\xi)$ for any suitable triangulation. Furthermore, we see that if we replace $T$ with some subdivision $T'$, then $Z$ and  the other spaces involved in defining our intersection  coefficient are still well-defined as simplicial subspaces of $X$ in this subdivision. Furthermore, if $\tau$ is an $i-n$ simplex of $T'$ with $\tau\subset \sigma$ and with $\tau$ oriented consistently with $\sigma$, then the diagram 

\begin{diagram}
H_{i-n}(|\xi|_Z'\cup J_Z,|\bd \xi|_Z'\cup J_Z)\\
\dTo&\rdTo(4,2)\\
 H_{*}((|\xi|'_Z\cup J_Z,(|\xi|_Z'\cup J_Z)-int(\sigma))&&\rTo&&H_{*}((|\xi|'_Z\cup J_Z,(|\xi|_Z'\cup J_Z)-int(\tau)))\\
\uTo^\cong&&&\ruTo^\cong&\uTo^\cong\\
 H_{*}(\sigma, \bd \sigma)&\rTo&H_*(\sigma,\sigma-int(\tau))&\lTo^\cong&H_{*}(\tau, \bd \tau))
 \end{diagram}
 shows that $I^{Z}_{\tau}(\xi)=I^{Z, T'}_{\tau}(\xi)$ agrees with  $I^{Z}_{\sigma}(\xi)=I^{Z,T}_{\sigma}(\xi)$.
\end{remark}

\begin{remark}\label{R: patch}
 By assumption, $\dim(|\xi|'_Z)\leq i-n$, and therefore, by Lemma \ref{L: useful coeff big} (taking $B=C$ there), an element of $H_{i-n}(|\xi|_Z'\cup J_Z,|\bd \xi|_Z'\cup J_Z)$, as occurs in the middle of our definition of $I^{Z,T}_\sigma(\xi)$, determines a PL chain in $Z$ that consists, in any compatible triangulation, of $i-n$ simplices not contained in $\Sigma_Z$ or $S_Z$ and whose boundary is contained in $|\bd \xi|'_Z\cup \Sigma_Z\cup S_Z$. 
We can think of this chain as the \emph{local umkehr image of $\xi$ in $Z$}, which we denote $\Delta_!^{Z}(\xi)$. If we wish to fix a given triangulation, we can refer more specifically to the simplicial chain 
$\Delta_!^{Z,T}(\xi)$ whose underlying PL chain is $\Delta_!^{Z}(\xi)$.
 The further refinement in the definition to $I^{Z,T}_\sigma(\xi)$, itself, is then just an example of the process  described in Remarks \ref{R: useful coeff} and \ref{R: useful coeff big} for finding the coefficients of simplices in a simplicial representation of a PL chain. Our construction of $\Delta_!(\xi)$, which is  described in the next section, can then be thought of alternatively as patching together these local images to obtain a single global chain in $X$ with boundary in $|\bd \xi|\cup \Sigma$. The independence of choice of $Z$ that we are about to demonstrate implies that this patching is well defined locally, and so determines a unique such global image chain. 
\end{remark}

\paragraph{Independence of $Z$.}
Next we want to show that $I^Z_{\sigma}(\xi)$ is independent of the choice of $Z$. It is sufficient to demonstrate this property with respect to an alternative choice, say $Y$, in any triangulation compatible with the stratification and containing $|\xi|$ and $\Delta(X)$ as subcomplexes,   and such that $\sigma \subset Y\subset Z$. For if $Z'$ is an arbitrary alternative to $Z$ then we can always find a $Y$ such that $Z\supset Y\subset Z'$ by taking $Y$ to be a star neighborhood of $\sigma$ in a sufficiently fine triangulation. Then the independence of choices between $Z$ and $Y$ and between $Y$ and $Z'$ demonstrate the overall independence of choice of $Z$.

\begin{proposition}\label{P: indep Z}
The intersection coefficient $I^Z_{\sigma}(\xi)$ is independent of the choice of $Z$. 
\end{proposition}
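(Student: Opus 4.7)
By Remark \ref{R: subdiv}, I would work in any sufficiently fine triangulation $T$ for which $\sigma$, $Y$, $Z$, $|\xi|$, $\Sigma$, and $\Delta(X)$ are all compatible subcomplexes, with the induced product triangulation also compatible on $X\times X$. As observed in the paragraph preceding the proposition statement, it suffices to verify $I^Z_\sigma(\xi)=I^Y_\sigma(\xi)$ in the case $\sigma\subset Y\subset Z$, since any two neighborhoods $Z$ and $Z'$ of $\sigma$ can be compared through a common sub-star-neighborhood $Y$ of $\sigma$.

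The plan is to construct a ladder diagram whose top row is the composition defining $I^Y_\sigma(\xi)$ from Definition \ref{D: int coeff Z} and whose bottom row is the analogous composition defining $I^Z_\sigma(\xi)$; commutativity of the ladder, chased through to the terminal copy of $\Z$, yields the desired equality. The vertical maps are assembled from inclusion-induced maps, excisions, and the naturality morphisms from Lemmas \ref{L: natural dual}, \ref{L: sing indep}, and \ref{L: expansion}. The key geometric observation, which I would verify by an elementary simplicial computation, is that $D_Y=D_Z\cup cl(Z-Y)$ and $S_Y\subset S_Z\cup cl(Z-Y)$, from which one obtains set equalities such as
\[
|\xi|'_Y\cup J_Y\cup cl(Z-Y)=|\xi|'_Z\cup J_Z\cup cl(Z-Y),
\]
and the analogous equality for $|\bd\xi|$, together with the corresponding identities in $X\times X$ for $cl((Z\times Z)-(Y\times Y))$. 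Consequently the hypothesis $X\cap T\subset S$ of Lemma \ref{L: expansion} holds with $(X,S)=(Y,J_Y)$ and $T=cl(Z-Y)$, and analogously on the product, so $Y$ can be enlarged to $Z$ by absorbing the added top-dimensional simplices into the singular stratum without affecting the duality.

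Commutativity of the squares involving only inclusion maps and excisions (near both ends of the composition) is immediate from functoriality. The square involving the diagonal pullback $\Delta^*$ commutes by naturality of pullback under the inclusion $Y\times Y\subset Z\times Z$ covering $Y\subset Z$. The two squares involving the Goresky--MacPherson duality map $\D$ are the substantive ones: in each, I would factor the vertical bridge as an application of Lemma \ref{L: expansion} (enlarging the ambient $Y$ to $Z$ while absorbing $cl(Z-Y)$ into the singular stratum), an application of Lemma \ref{L: sing indep} (using the identity $J_Y\cup cl(Z-Y)=J_Z\cup cl(Z-Y)$ and then contracting the singular piece down to $J_Z$), and an application of Lemma \ref{L: natural dual} (naturality in the compact pair $(K,L)$).

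The hard part will be the bookkeeping: correctly labeling every group and map across the two rows and verifying, at each step, both the manifold hypotheses (that $Y-J_Y$, $Z-J_Z$, and their products are oriented manifolds of the right dimension with compatible orientations inherited from $X$ and $X\times X$) and the containment hypotheses of the naturality lemmas. Once the diagram is set up and each square is checked, the conclusion follows by reading off the common image in $H_{i-n}(\sigma,\bd\sigma)\cong\Z$; the extra set $cl(Z-Y)$ contributes nothing to that final excision because $\sigma$ lies in the interior of $Y$ within $Z$ and is therefore disjoint from $cl(Z-Y)$.
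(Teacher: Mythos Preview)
Your plan is correct and matches the paper's approach essentially exactly. The paper also reduces to the case $Y\subset Z$, also builds the comparison out of the three naturality lemmas \ref{L: natural dual}, \ref{L: sing indep}, \ref{L: expansion}, and also relies on the set identities you record (in the paper's notation, $J_Y\cup D_Y=J_Z\cup D_Y$ and its product analogue, which is your $J_Y\cup cl(Z-Y)=J_Z\cup cl(Z-Y)$ once one checks that the union of $n$-simplices of $Z$ not in $Y$ is precisely $cl(Z-Y)$). The only cosmetic difference is that the paper makes the intermediate state explicit as a third row---writing a $Z$-row, a middle row carrying the extra piece $D_Y\cap Z=cl(Z-Y)\cup S_Z$, and a $Y$-row, with the $Y$-to-middle maps isomorphisms---whereas you describe a two-row ladder whose vertical arrows are composites; once you unpack your factoring ``expansion, then sing indep, then natural dual,'' the same intermediate row appears, and the orientation of the Lemma~\ref{L: natural dual} arrow forces you into the same $Z\to\text{middle}\xleftarrow{\cong}Y$ zigzag the paper uses.
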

\begin{proof}
The proof will proceed through a series of commutative diagrams, throughout which the top lines will compose to the definition of $I^{Z,T}_\sigma(\xi)$ and the bottom lines will compose to the definition of $I^{Y,T}_\sigma(\xi)$. The middle line will be an intermediary that accepts maps from the top and bottom lines, but the middle and lower lines will be isomorphic throughout, allowing us to reverse the lower vertical maps to obtain a direct comparison between $I^{Z,T}_\sigma(\xi)$ and $I^{Y,T}_\sigma(\xi)$.

The following observations will be useful: As $Y\subset Z$, it follows that
$D_Z\subset D_Y$, $S_Z\subset D_Y$, $D_{Z\times Z}\subset D_{Y\times Y}$,  if $A$ is any subset of $X$ then $(A_Z)_Y=A_Y$, and if $A$ is any subset of $X\times X$ then $(A_{Z\times Z})_{Y\times Y}=A_{Y\times Y}$. Also, $J_{Y\times Y}\subset J_{Z\times Z}\cup D_{Y\times Y}$, and, in fact, one can check that $J_{Y\times Y}\cup D_{Y\times Y}=J_{Z\times Z}\cup D_{Y\times Y}$, using that $\Sigma_Z\subset \Sigma_Y\cup D_Y$. Similarly, $J_Z\cup D_Y=J_Y\cup D_Y$.

First, we have the following:

\medskip

{\resizebox{.95\hsize}{!}{
\begin{diagram}
&&H_i^{\infty}(|\xi|\cup J_{Z\times Z}\cup D_{Z\times Z},|\bd \xi|\cup J_{Z\times Z}\cup D_{Z\times Z})&\lTo^{\cong}&H_i(|\xi|_{Z\times Z}\cup J_{Z\times Z}, |\bd \xi|_{Z\times Z}\cup J_{Z\times Z})\\
&\ruTo&\dTo&&\dTo\\
H_i^{\infty}(|\xi|,|\bd \xi|)&\rTo& H_i^{\infty}(|\xi|\cup J_{Z\times Z}\cup D_{Y\times Y},|\bd \xi|\cup J_{Z\times Z}\cup D_{Y\times Y})&\lTo^{\cong}&H_i(|\xi|_{Z\times Z}\cup J_{Z\times Z}\cup(D_{Y\times Y}\cap (Z\times Z)), |\bd \xi|_{Y\times Y}\cup J_{Y\times Y}\cup(D_{Y\times Y}\cap (Z\times Z)))\\
&\rdTo&\uTo^=&&\uTo^\cong\\
&& H_i^{\infty}(|\xi|\cup J_{Y\times Y}\cup D_{Y\times Y},|\bd \xi|\cup J_{Y\times Y}\cup D_{Y\times Y})&\lTo^{\cong}&H_i(|\xi|_{Y\times Y}\cup J_{Y\times Y}, |\bd \xi|_{Y\times Y}\cup J_{Y\times Y}).\\
\end{diagram}}}

\bigskip

The commutativity is clear at the space level as all maps are induced by spatial inclusions, using the observations concerning spatial relations given above. Notice also that   $|\bd \xi|_{Z\times Z}$ is the union of $|\bd \xi|_{Y\times Y}$ with the simplices of $|\bd \xi|$ in $D_{Y\times Y}\cap(Z\times Z)$. The right horizontal arrow in the middle row excises $(X\times (D_Z-S_Z))\cup ((D_Z-S_Z)\times X)$, and so is an isomorphism. Thus the bottom right vertical map is also an isomorphism.  


Next, we have a diagram 

\bigskip

\resizebox{.95\hsize}{!}{
\begin{diagram}
H_i(|\xi|_{Z\times Z}\cup J_{Z\times Z}, |\bd \xi|_{Z\times Z}\cup J_{Z\times Z})&\lTo^{\D}&H^{2n-i}(Z\times Z-(|\bd \xi|_{Z\times Z}\cup J_{Z\times Z}),Z\times Z-(|\xi|_{Z\times Z}\cup J_{Z\times Z}))\\
\dTo&&\dTo\\
H_i(|\xi|_{Z\times Z}\cup J_{Z\times Z}\cup(D_{Y\times Y}\cap (Z\times Z)), |\bd \xi|_{Y\times Y}\cup J_{Y\times Y}\cup(D_{Y\times Y}\cap (Z\times Z)))&\lTo^{\D}&H^{2n-i}(Z\times Z-(|\bd \xi|_{Y\times Y}\cup J_{Y\times Y}\cup(D_{Y\times Y}\cap (Z\times Z))),Z\times Z-(|\xi|_{Z\times Z}\cup J_{Z\times Z}\cup(D_{Y\times Y}\cap (Z\times Z)))\\
\uTo^\cong&&\uTo^=\\
H_i(|\xi|_{Y\times Y}\cup J_{Y\times Y}, |\bd \xi|_{Y\times Y}\cup J_{Y\times Y})&\lTo^{\D}&H^{2n-i}(Y\times Y-(|\bd \xi|_{Y\times Y}\cup J_{Y\times Y}),Y\times Y-(|\xi|_{Y\times Y}\cup J_{Y\times Y})).
\\
\end{diagram}}

\bigskip

The top commutes by Lemma \ref{L: natural dual}, using $(Z\times Z,J_{Z\times Z})$ for the pair ``$(Z,S)$''. 
By Lemma \ref{L: sing indep}, we can also treat the middle duality isomorphism as utilizing $(Z\times Z,J_{Y\times Y}\cup(D_{Y\times Y}\cap (Z\times Z)))$ for the pair ``$(Z,S)$.'' Then the bottom square commutes by Lemma \ref{L: expansion}, letting the $S$ of that lemma be $J_{Y\times Y}$ and the $T$ of that lemma be $D_{Y\times Y}\cap (Z\times Z)$.

We then have the naturality of the pullback:

\bigskip

\resizebox{.95\hsize}{!}{
\begin{diagram}
H^{2n-i}(Z\times Z-(|\bd \xi|_{Z\times Z}\cup J_{Z\times Z}),Z\times Z-(|\xi|_{Z\times Z}\cup J_{Z\times Z}))&\rTo^{\Delta^*}&H^{2n-i}(Z-|\bd \xi|_Z'\cup J_Z,Z-(|\xi|_Z'\cup J_Z))\\
\dTo&&\dTo\\
H^{2n-i}(Z\times Z-(|\bd \xi|_{Y\times Y}\cup J_{Y\times Y}\cup(D_{Y\times Y}\cap (Z\times Z))),Z\times Z-(|\xi|_{Z\times Z}\cup J_{Z\times Z}\cup(D_{Y\times Y}\cap (Z\times Z))))&\rTo^{\Delta^*}&H^{2n-i}(Z-|\bd \xi|_Y'\cup J_Y\cup(D_Y\cap Z),Z-(|\xi|_Z'\cup J_Z\cup(D_Y\cap Z)))\\
\uTo^=&&\uTo^=\\
H^{2n-i}(Y\times Y-(|\bd \xi|_{Y\times Y}\cup J_{Y\times Y}),Y\times Y-(|\xi|_{Y\times Y}\cup J_{Y\times Y}))&\rTo^{\Delta^*}&H^{2n-i}(Y-|\bd \xi|_Y'\cup J_Y,Y-(|\xi|_Y'\cup J_Y)).
\\
\end{diagram}}

\bigskip

This is followed again by a commutative duality map

\bigskip

\resizebox{.95\hsize}{!}{
\begin{diagram}
H^{2n-i}(Z-|\bd \xi|_Z'\cup J_Z,Z-(|\xi|_Z'\cup J_Z))&\rTo^{\D}&H_{i-n}(|\xi|_Z'\cup J_Z,|\bd \xi|_Z'\cup J_Z)\\
\dTo&&\dTo\\
H^{2n-i}(Z-|\bd \xi|_Y'\cup J_Y\cup(D_Y\cap Z),Z-(|\xi|_Z'\cup J_Z\cup(D_Y\cap Z)))&\rTo^{\D}&H_{i-n}(|\xi|_Z'\cup J_Y\cup(D_Y\cap Z),|\bd \xi|_Z'\cup J_Z\cup (D_Y\cap Z))\\
\uTo^=&&\uTo^\cong\\
H^{2n-i}(Y-|\bd \xi|_Y'\cup J_Y,Y-(|\xi|_Y'\cup J_Y))&\rTo^{\D}&H_{i-n}(|\xi|_Y'\cup J_Y,|\bd \xi|_Y'\cup J_Y).
\\
\end{diagram}}

\bigskip

Again, the top commutes by Lemma \ref{L: natural dual}, using $(Z,J_Z)$ for the pair ``$(Z,S)$''. 
By Lemma \ref{L: sing indep}, we can also treat the middle duality isomorphism as utilizing $(Z,J_Y\cup (D_Y\cap Z))$ for the pair ``$(Z,S)$.'' Then the bottom square commutes by Lemma \ref{L: expansion}, letting the $T$ of that lemma be $D_Y\cap Z$. 

Finally, we have the diagram 

\bigskip

\resizebox{.95\hsize}{!}{
\begin{diagram}
H_{i-n}(|\xi|_Z'\cup J_Z,|\bd \xi|_Z'\cup J_Z))&\rTo&H_{i-n}((|\xi|'_Z\cup J_Z,(|\xi|_Z'\cup J_Z)-int(\sigma))&\lTo^\cong& H_{i+j-n}(\sigma, \bd \sigma)\\
\dTo&&\dTo&&\dTo^=\\
H_{i-n}(|\xi|_Z'\cup J_Y\cup(D_Y\cap Z),|\bd \xi|_Z'\cup J_Z\cup (D_Y\cap Z))&\rTo&H_{i-n}((|\xi|'_Z\cup J_Y\cup(D_Y\cap Z),(|\xi|_Z'\cup J_Y\cup(D_Y\cap Z))-int(\sigma))&\lTo^\cong& H_{i+j-n}(\sigma, \bd \sigma) \\
\uTo^\cong&&\uTo^\cong&&\uTo^\cong^=\\
H_{i-n}(|\xi|_Y'\cup J_Y,|\bd \xi|_Y'\cup J_Y) &\rTo&H_{i-n}((|\xi|'_Y\cup J_Y,(|\xi|_Y'\cup J_Y)-int(\sigma))&\lTo^\cong& H_{i+j-n}(\sigma, \bd \sigma).
\end{diagram}}

\bigskip

The lower vertical map in the middle column is an isomorphism from the commutativity of the diagram and from the other excision isomorphisms.

Putting all these pieces together, we see that $I^{Z,T}_{\sigma}(\xi)=I^{Y,T}_{\sigma}(\xi)$, which is the desired independence result.
\end{proof}

\begin{definition}
Proposition \ref{P: indep Z}, together  with Remark \ref{R: subdiv},  demonstrates that we are justified in defining the \emph{intersection coefficient} $I_{\sigma}(\xi)$ to be  $I^{Z,T}_{\sigma}(\xi)$ for any appropriate choice of $Z$ and $T$, without ambiguity.
\end{definition}

\paragraph{Additivity.}
We would next like to show that if we have $\xi_1,\xi_2\in C_i^{\Delta,\infty}((X,\Sigma)\times (X,\Sigma)))$,  then 
$I_{\sigma}(\xi_1+\xi_2)=I_{\sigma}(\xi_1)+I_{\sigma}(\xi_2)$. Below, this will allow us to see that the intersection product is a homomorphism. This point is somewhat neglected in all the previous treatments \cite{GM1, McC, GBF18}, though it is not completely trivial.  The tricky point is that $I_{\sigma}(\xi_1)$ is defined using a representation of  $\xi_1$ in $H_i^\infty(|\xi_1|,|\bd \xi_1|)$, while $I_{\sigma}(\xi_2)$ is defined using a representation of $\xi_2$ in $H_i^\infty(|\xi_2|,|\bd \xi_2|)$, and $I_{\sigma}(\xi_1+\xi_2)$ is defined using a representation of $\xi_1+\xi_2$ in $H_i^\infty(|\xi_1+\xi_2|,|\bd (\xi_1+\xi_2)|)$. Beyond needing to find a way to relate these groups, we also have the issue that $|\xi_1+\xi_2|\neq |\xi_1|\cup|\xi_2|$ in general, as there may be cancellation of simplices in the sum. The solution is to observe that the use of $H_i^\infty(|\xi|,|\bd \xi|)$ in the first step of the definition of $I_\sigma(\xi)$ is overly restrictive. This is the content of the next lemma.

\begin{lemma}\label{L: add}
Let $B\subset A\subset X\times X$ be PL subspaces such that 
\begin{enumerate}
\item $\dim(A)=i$ and $\dim(B)<i$,
\item $|\xi|\subset A$ and $|\bd \xi|\subset B$
\item $A$ and $B$ are in  general position with respect to $\Delta$. 
\end{enumerate}
 Then for $\sigma\subset A\cap \Delta(X)$ and $\sigma\not\subset \Sigma$, the definition of $I_\sigma(\xi)$ remains unchanged replacing $[\xi]\in H_i^\infty(|\xi|,|\bd \xi|)$ with its image in $H_i^\infty(A,B)$ in the first step of the construction of Definition \ref{D: int coeff Z} and then replacing $(|\xi|,|\bd \xi|)$ with $(A,B)$ throughout. If $\sigma\not\subset A\cap \Delta(X)$, we set $I_\sigma(\xi)=0$, which is consistent with the original definition.
\end{lemma}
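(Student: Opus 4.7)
The plan is to compare the two constructions of $I_\sigma(\xi)$ via a ladder diagram whose two rows are the compositions in Definition \ref{D: int coeff Z}, the top row using $(|\xi|,|\bd \xi|)$ and the bottom row using $(A,B)$, with the vertical maps induced by the inclusion of pairs $(|\xi|,|\bd \xi|)\hookrightarrow(A,B)$. Once commutativity is established, starting with $[\xi]\in H_i^\infty(|\xi|,|\bd \xi|)$ and following the top gives the original $I_\sigma(\xi)$, while following the inclusion down and then the bottom row gives the alternative. The degenerate case $\sigma\not\subset A\cap\Delta(X)$ is immediate: from $|\xi|\subset A$ we get $|\xi|'\subset A'$, so $\sigma\not\subset |\xi|'$, and the original recipe already returns $0$.

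For the main case I would verify commutativity one square at a time. The first square (inclusion into homology of pairs adjoined with $J_{Z\times Z}\cup D_{Z\times Z}$) and the square involving $\Delta^*$ are automatic by functoriality. The two Goresky-MacPherson duality squares commute by Lemma \ref{L: natural dual}: for the first one I take the pair $(Z\times Z, J_{Z\times Z})$ in the role of $(X,S)$ there, with the inclusion $(|\xi|_{Z\times Z}\cup J_{Z\times Z},|\bd \xi|_{Z\times Z}\cup J_{Z\times Z})\subset(A_{Z\times Z}\cup J_{Z\times Z},B_{Z\times Z}\cup J_{Z\times Z})$; for the second, the pair $(Z,J_Z)$ and the analogous inclusion after pulling back along $\Delta$. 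The final inclusion to $(A'_Z\cup J_Z,(A'_Z\cup J_Z)-\mathrm{int}(\sigma))$ and excision to $H_{i-n}(\sigma,\bd\sigma)$ are legitimate because $\sigma\subset A'_Z$ by hypothesis and $\mathrm{int}(\sigma)\not\subset B'_Z\cup J_Z$; the latter uses the general position assumption on $B$ with respect to $\Delta$, which forces $\dim(B'-\Sigma)\le\dim(B)-n<i-n$, so no $(i-n)$-simplex with $\sigma\not\subset\Sigma$ can have its interior in $B'_Z$, while $\mathrm{int}(\sigma)\cap(\Sigma_Z\cup S_Z)=\emptyset$ holds because $\sigma\not\subset\Sigma$ and we chose $Z$ so that $\mathrm{int}(\sigma)\subset\mathrm{int}(Z)$.

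The main technical obstacle is the excision square (second step of the composition). In the original definition, the excision $H_i^\infty(|\xi|\cup J_{Z\times Z}\cup D_{Z\times Z},\dots)\xleftarrow{\cong}H_i(|\xi|_{Z\times Z}\cup J_{Z\times Z},\dots)$ rested on the identity $(|\bd \xi|\cup J_{Z\times Z}\cup D_{Z\times Z})\cap(|\xi|_{Z\times Z}\cup J_{Z\times Z})=|\bd \xi|_{Z\times Z}\cup J_{Z\times Z}$, which followed from $D_{Z\times Z}\cap|\xi|_{Z\times Z}\subset J_{Z\times Z}$. The identical argument works with $A$ in place of $|\xi|$: $D_{Z\times Z}\cap A_{Z\times Z}\subset D_{Z\times Z}\cap(Z\times Z)=(Z\times S_Z)\cup(S_Z\times Z)\subset J_{Z\times Z}$, using only that $A\subset X\times X$ and $A_{Z\times Z}=A\cap(Z\times Z)$. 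Hence the modified excision is a valid isomorphism, and the naturality of excision with respect to the inclusion of pairs makes the square commute. Also, using $\dim(A)=i$ and $\dim(B)<i$, Lemma \ref{L: useful} guarantees that the image of $[\xi]$ in $H_i^\infty(A,B)$ is well defined and unambiguous.

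Assembling the ladder, both paths from $[\xi]\in H_i^\infty(|\xi|,|\bd \xi|)$ to $\mathbb{Z}$ yield the same integer, so $I_\sigma(\xi)$ computed from a representative of $[\xi]$ in $H_i^\infty(A,B)$ and the pairs $(A,B)$ throughout equals the original $I_\sigma(\xi)$, as claimed.
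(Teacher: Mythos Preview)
Your ladder-diagram approach is exactly the paper's, and the square-by-square verification is correct as far as it goes. However, you have missed one case. You split into $\sigma\not\subset A'$ (degenerate) and $\sigma\subset A'$ (main), but the latter subdivides again according to whether $\sigma\subset |\xi|'$. Your ladder only closes up when $\sigma\subset|\xi|'$: the top row's final excision $H_{i-n}(\sigma,\bd\sigma)\to H_{i-n}(|\xi|'_Z\cup J_Z,(|\xi|'_Z\cup J_Z)-\mathrm{int}(\sigma))$ requires $\sigma\subset|\xi|'_Z\cup J_Z$, which fails if $\sigma\subset A'$ but $\sigma\not\subset|\xi|'$ (and $\sigma\not\subset J_Z$). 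In that situation the original definition declares $I_\sigma(\xi)=0$ by fiat, not by running the composition, so there is no commuting ladder to invoke; you must separately argue that the bottom row (the $(A,B)$ computation) also returns $0$.

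The fix is short and uses what you have already built. The ladder commutes through the step landing in $H_{i-n}(A'_Z\cup J_Z,B'_Z\cup J_Z)$, since nothing above that depends on $\sigma$. Hence the image of $[\xi]$ there lies in the image of $H_{i-n}(|\xi|'_Z\cup J_Z,|\bd\xi|'_Z\cup J_Z)$ and is therefore represented by a chain supported in $|\xi|'_Z\cup J_Z$. As $\sigma$ is not contained in this set, such a chain has zero coefficient on $\sigma$ and maps to $0$ in $H_{i-n}(A'_Z\cup J_Z,(A'_Z\cup J_Z)-\mathrm{int}(\sigma))$. This is precisely the extra paragraph the paper supplies.
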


\begin{proof}
Definition \ref{D: int coeff Z} defines  $I_\sigma(\xi)$ in terms of $I^{Z,T}_\sigma(\xi,\eta)$. We show here that $I^{Z,T}_\sigma(\xi)$ does not change when we replace $(|\xi|,|\bd \xi|)$ by $(A,B)$, but the independence of $I^{Z,T}_\sigma(\xi)$ with respect to choices of $Z$ and $T$ will then indicate that the same is true using $(A,B)$.

Choose $T$, $\sigma$, and $Z$ as in Definition \ref{D: int coeff Z}, though assuming also that $T$ is compatible with $A$ and $B$. We first suppose $\sigma\subset |\xi|_Z'$ and $\sigma\not\subset \Sigma_X$. Let $A'=\Delta^{-1}(A)=A\cap \Delta(X)$ and define  $B'$ similarly. Then we have the following  diagram

{\scriptsize
\begin{diagram}
H_i^{\infty}(|\xi|,|\bd \xi|)&\rTo&H_i^{\infty}(A,B)\\
\dTo&&\dTo\\
H_i^{\infty}(|\xi|\cup J_{Z\times Z}\cup D_{Z\times Z},|\bd \xi|\cup J_{Z\times Z}\cup D_{Z\times Z})&\rTo&H_i^{\infty}(A\cup J_{Z\times Z}\cup D_{Z\times Z},B\cup J_{Z\times Z}\cup D_{Z\times Z})\\
\uTo^\cong&&\uTo^\cong\\
H_i(|\xi|_{Z\times Z}\cup J_{Z\times Z}, |\bd \xi|_{Z\times Z}\cup J_{Z\times Z})&\rTo&H_i(A_{Z\times Z}\cup J_{Z\times Z}, B_{Z\times Z}\cup J_{Z\times Z})\\
\uTo^{\D}&&\uTo^{\D}\\
 H^{2n-i}(Z\times Z-(|\bd \xi|_{Z\times Z}\cup J_{Z\times Z}),Z\times Z-(|\xi|_{Z\times Z}\cup J_{Z\times Z}))&\rTo&H^{2n-i}(Z\times Z-(B_{Z\times Z}\cup J_{Z\times Z}),Z\times Z-(A_{Z\times Z}\cup J_{Z\times Z}))\\
\dTo^{\Delta^*} &&\dTo^{\Delta^*} \\
 H^{2n-i}(Z-|\bd \xi|_Z'\cup J_Z,Z-(|\xi|_Z'\cup J_Z))&\rTo&H^{2n-i}(Z-B_Z'\cup J_Z,Z-(A_Z'\cup J_Z))\\
\dTo^{\D}&&\dTo^{\D}\\
H_{i-n}(|\xi|_Z'\cup J_Z,|\bd \xi|_Z'\cup J_Z)&\rTo&H_{i-n}(A_Z'\cup J_Z,B_Z'\cup J_Z)\\
\dTo&&\dTo\\
H_{i-n}((|\xi|'_Z\cup J_Z,(|\xi|_Z'\cup J_Z)-int(\sigma)&\rTo&H_{i-n}((A'_Z\cup J_Z,(A_Z'\cup J_Z)-int(\sigma)\\
\uTo^{\cong} &&\uTo^{\cong}\\
H_{i-n}(\sigma, \bd \sigma)&\rTo&H_{i-n}(\sigma, \bd \sigma)\\
\uTo^\cong&&\uTo^\cong\\
\Z&\rTo&\Z.
\end{diagram}}

The diagram commutes by the obvious space inclusions and by Lemma \ref{L: natural dual}, for the duality maps; we can use $(Z,J_Z)$ and $(Z\times Z,J_{Z\times Z})$ as the pairs ``(X,S)'' for the duality isomorphisms. 
Note that the inclusion map $H_{i-n}(A_Z'\cup J_Z,B_Z'\cup J_Z)\to H_{i-n}(A'_Z\cup J_Z,(A_Z'\cup J_Z)-int(\sigma))$ utilizes our usual assumption that $\sigma$ is not contained in $J_Z$, while the assumptions on $B$ assure that $\dim(B_X')<i-n$. 
Similarly, the excision isomorphism $H_{i-n}(\sigma, \bd \sigma)\to H_{i-n}((A'_Z\cup J_Z,(A_Z'\cup J_Z)-int(\sigma))$ uses the dimension assumptions to assure that $\sigma\cap ((A_Z'\cup J_Z)-int(\sigma))=\bd \sigma$. 
The diagram shows that, in this case, $I^{Z,T}_{\sigma}(\xi)$ can indeed be computing utilizing $(A,B)$  as claimed.

Next, suppose $\sigma\not\subset A'$. Then $\sigma\not\subset |\xi|'$, so $I^{Z,T}_\sigma(\xi)=0$ by definition, which is consistent with the statement of the lemma here. 

Finally, suppose $\sigma\subset A'$ but $\sigma\not\subset |\xi|'$. Then $I^{Z,T}_\sigma(\xi)=0$ by definition, and we must show that the image down the right side of our diagram of $[\xi]\in H_i^{\infty}(A,B)$   is $0$. But notice that the top part of the diagram does not depend on $\sigma$, so the image of $[\xi]$ in   $H_{i-n}(A_Z'\cup J_Z,B_Z'\cup J_Z)$ must be in the image of $H_{i-n}(|\xi|_Z'\cup J_Z,|\bd \xi|_Z'\cup J_Z)$. Therefore, it can be represented by a chain that does not include $\sigma$ and so represents $0$ in 
$H_{i-n}(A'_Z\cup J_Z,(A_Z'\cup J_Z)-int(\sigma))$, as desired.
\end{proof}

\begin{proposition}\label{P: additivity}
Suppose  $\xi_1,\xi_2\in C^{\Delta,\infty}_i((X,\Sigma)\times (X,\Sigma)))$. Then 
$I_{\sigma}(\xi_1+\xi_2)=I_{\sigma}(\xi_1)+I_{\sigma}(\xi_2)$. 
\end{proposition}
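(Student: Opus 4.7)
The plan is to reduce to Lemma \ref{L: add} by choosing a common pair $(A,B)$ that accommodates all three chains $\xi_1$, $\xi_2$, and $\xi_1+\xi_2$ simultaneously. Specifically, set
\[
A = |\xi_1|\cup|\xi_2| \qquad\text{and}\qquad B = |\bd\xi_1|\cup|\bd\xi_2|.
\]
First I would verify that $(A,B)$ satisfies the hypotheses of Lemma \ref{L: add}: the dimensions behave as required since $\dim(A)\le\max(\dim|\xi_1|,\dim|\xi_2|)=i$ and $\dim(B)\le i-1$; the inclusions $|\xi_k|\subset A$, $|\bd\xi_k|\subset B$ for $k=1,2$ are immediate, and for the sum we have $|\xi_1+\xi_2|\subset|\xi_1|\cup|\xi_2|=A$ and $|\bd(\xi_1+\xi_2)|\subset|\bd\xi_1|\cup|\bd\xi_2|=B$ (possible cancellations only shrink the supports). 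Finally, $A'=|\xi_1|'\cup|\xi_2|'$ and $B'=|\bd\xi_1|'\cup|\bd\xi_2|'$, so the dimension bounds $\dim(A'-\Sigma)\le i-n$ and $\dim(B'-\Sigma)\le i-n-1$ follow from the assumption that both $\xi_1$ and $\xi_2$ lie in $C_i^{\Delta,\infty}((X,\Sigma)\times(X,\Sigma))$, so $(A,B)$ are in general position with respect to $\Delta$ in the sense required by Lemma \ref{L: add}.

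Next, by Lemma \ref{L: add}, each of $I_\sigma(\xi_1)$, $I_\sigma(\xi_2)$, and $I_\sigma(\xi_1+\xi_2)$ may be computed by the composition of Definition \ref{D: int coeff Z} applied to the image in $H_i^\infty(A,B)$ of the corresponding homology class, with $(|\xi|,|\bd\xi|)$ uniformly replaced by $(A,B)$ throughout the sequence of maps. In the group $H_i^\infty(A,B)$, the chain $\xi_1+\xi_2$ represents the sum of the classes represented by $\xi_1$ and $\xi_2$, since the passage from $H_i^\infty(|\xi_k|,|\bd\xi_k|)$ to $H_i^\infty(A,B)$ is an inclusion-induced map and the isomorphism $\alpha$ of Lemma \ref{L: useful} is additive.

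From this point the additivity is automatic: every arrow in the composite of Definition \ref{D: int coeff Z} --- inclusion-induced maps, the excision isomorphism, the Goresky--MacPherson duality isomorphism $\D$ of Proposition \ref{P: GM dual}, the diagonal pullback $\Delta^*$, the map to the local homology of $(\sigma,\bd\sigma)$, and the orientation identification with $\Z$ --- is a homomorphism of abelian groups. Composing homomorphisms and evaluating on the sum of classes yields $I_\sigma(\xi_1+\xi_2)=I_\sigma(\xi_1)+I_\sigma(\xi_2)$ as desired.

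The one subtle point, and what Lemma \ref{L: add} was designed to handle, is that without passing to a common $(A,B)$ the three terms live in \emph{a priori} unrelated homology groups (and $|\xi_1+\xi_2|$ need not equal $|\xi_1|\cup|\xi_2|$ because of simplex cancellation); so the hard part of the argument is really encoded in Lemma \ref{L: add}, and what remains here is to verify that the hypotheses of that lemma are satisfied for the chosen $(A,B)$ and then invoke linearity.
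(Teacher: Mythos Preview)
Your proposal is correct and follows essentially the same approach as the paper's proof: set $A=|\xi_1|\cup|\xi_2|$, $B=|\bd\xi_1|\cup|\bd\xi_2|$, invoke Lemma \ref{L: add} to compute all three intersection coefficients in the common group $H_i^\infty(A,B)$, observe that $[\xi_1+\xi_2]=[\xi_1]+[\xi_2]$ there, and conclude by linearity of the maps in Definition \ref{D: int coeff Z}. Your verification of the hypotheses of Lemma \ref{L: add} is more explicit than the paper's, which simply asserts that the lemma applies; the only cosmetic point is that Lemma \ref{L: add} states $\dim(A)=i$ rather than $\le i$, but this holds automatically once either $\xi_k$ is nonzero (and the zero case is trivial).
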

\begin{proof}
By Lemma \ref{L: add}, $I_{\sigma}(\xi_1+\xi_2)$, $I_{\sigma}(\xi_1)$, and $I_{\sigma}(\xi_2)$ can all be computed using the formula of Definition \ref{D: int coeff Z} beginning with representations of the chains $\xi_1$, $\xi_2$, and $\xi_1+\xi_2$ as elements $[\xi_1], [\xi_2],[\xi_1+\xi_2]\in H_i^\infty(|\xi_1|\cup|\xi_2|,|\bd \xi_1|\cup |\bd \xi_2|)$. Clearly  $[\xi_1]+[\xi_2]=[\xi_1+\xi_2]$, using the isomorphism   $\alpha_{|\xi_1|\cup|\xi_2|,|\bd \xi_1|\cup |\bd \xi_2|}$ of Lemma \ref{L: useful}. From here, the result follows using that the maps of Definition \ref{D: int coeff Z} are all homomorphisms. 
\end{proof}

\subsection{The umkehr map}\label{S: umkehr}
We can now officially define the umkehr map $\Delta_!:C^{\Delta,\infty}_*((X,\Sigma)\times (X,\Sigma))\to C^{\infty}_{*-n}(X,\Sigma)$ as follows:

\begin{definition}
Let $X$ be an oriented $n$-dimensional PL stratified pseudomanifold, not necessarily compact, and let $\xi\in C^{\Delta,\infty}_i((X,\Sigma)\times (X,\Sigma))$.   If $T$ is any triangulation of  $X$ restricting a triangulation of $X\times X$ that is compatible with the stratification and for which  $|\xi|$ and $\Delta(X)$ are subcomplexes,  then
define $\Delta^T_!(\xi)\in c^{T,\infty}_{i-n}(X,\Sigma)$
to be $$\Delta^T_!(\xi)=\sum_{\sigma\in T, \sigma\not\subset \Sigma} I_{\sigma}(\xi)\sigma.$$
By Remark \ref{R: subdiv}, the image of $\Delta^T_!(\xi)$ in $C^{\infty}_{i-n}(X,\Sigma)$ does not depend on $T$, and we  let this image be $\Delta_!(\xi)$.  It follows from Proposition \ref{P: additivity} that $\Delta_!$ is a homomorphism $C^{\Delta,\infty}_i((X,\Sigma)\times (X,\Sigma)) \to C^{\infty}_{i-n}(X,\Sigma)$. 
\end{definition}

Next we show that $\Delta_!$ is a chain map of degree $-n$. Recall (\cite[Remark VI.10.5]{Dold})  that this means that $\Delta_!$ lowers the degree by $n$ and that $\bd \Delta_!=(-1)^n\Delta_!\bd$. 

\begin{proposition}\label{P: chain map}
Let $X$ be an oriented $n$-dimensional PL stratified pseudomanifold, not necessarily compact, and let $\xi\in C^{\Delta,\infty}_i((X,\Sigma)\times (X,\Sigma))$. Then $\Delta_!(\bd \xi)=(-1)^n\bd(\Delta_!(\xi))\in C_{i-n-1}(X,\Sigma)$.
\end{proposition}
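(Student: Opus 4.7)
The plan is to check the identity $\Delta_!(\bd\xi)=(-1)^n\bd(\Delta_!(\xi))$ coefficient-by-coefficient. Specifically, I would fix an $(i-n-1)$-simplex $\tau\not\subset\Sigma$ of a compatible triangulation $T$ and show the coefficient of $\tau$ on each side agrees. On the left, by definition, the coefficient is $I_\tau(\bd\xi)$, which is defined because $\bd\xi\in C^{\Delta,\infty}_{i-1}((X,\Sigma)\times(X,\Sigma))$ by Definition \ref{D: Delta}. On the right, the coefficient of $\tau$ in $\bd(\Delta_!(\xi))$ will be extracted via Lemma \ref{L: useful coeff big}(ii) applied with $A=|\xi|'_Z\cup J_Z$, $B=|\bd\xi|'_Z\cup J_Z$, and $C=J_Z$: this identifies the chain boundary with the connecting morphism $\bd_*:H^\infty_{i-n}(A,B)\to H^\infty_{i-n-1}(B,C)$ applied to the homology class representing $\Delta_!(\xi)$, followed by the usual coefficient map to $H_{i-n-1}(\tau,\bd\tau)\cong\Z$.

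After fixing a common compact neighborhood $Z$ of $\tau$ of the sort required by Definition \ref{D: int coeff Z}, I would stack the sequence of maps from that definition as two rows of a large diagram: the top row run on $[\xi]\in H^\infty_i(|\xi|,|\bd\xi|)$, and the bottom row run on $[\bd\xi]\in H^\infty_{i-1}(|\bd\xi|)$. Corresponding entries in the two rows will be connected by the connecting morphisms of the appropriate triples, starting on the left with $\bd_*:H^\infty_i(|\xi|,|\bd\xi|)\to H^\infty_{i-1}(|\bd\xi|)$ sending $[\xi]\mapsto[\bd\xi]$, and ending on the right with the $\bd_*$ described in the previous paragraph. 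Squares involving only inclusions, excisions, and the pullback $\Delta^*$ will commute exactly by standard naturality. The two squares involving $\D$ commute up to a sign by Lemma \ref{L: GM dual boundary}: a sign of $(-1)^{2n}=1$ for the $\D$ applied in $Z\times Z-J_{Z\times Z}$ (a $2n$-manifold), and $(-1)^n$ for the $\D$ applied in $Z-J_Z$ (an $n$-manifold). Thus the two rows will differ by precisely $(-1)^n$, yielding the proposition.

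The hard part will be logistical rather than conceptual: I must keep straight which triple $(A,B,C)$ underlies each intermediate connecting morphism, which excisions remove which sets, and verify that the dimensional hypotheses of Lemma \ref{L: useful coeff big} and the manifold hypotheses of Lemma \ref{L: GM dual boundary} are satisfied throughout the construction. In particular, the two general-position conditions of Definition \ref{D: Delta} enter at different stages: $\dim(|\xi|'-\Sigma)\le i-n$ is used to describe $\Delta_!(\xi)$ as a chain via Lemma \ref{L: useful coeff big}(i), whereas $\dim(|\bd\xi|'-\Sigma)\le i-n-1$ is needed both to define $I_\tau(\bd\xi)$ on the $\bd\xi$-row and to apply Lemma \ref{L: useful coeff big}(ii) on the $\xi$-row.
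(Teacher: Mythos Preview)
Your proposal is correct and follows essentially the same argument as the paper: a coefficient-by-coefficient comparison via a large ladder diagram whose rungs are connecting morphisms, with the two $\D$-squares contributing signs $(-1)^{2n}$ and $(-1)^n$ by Lemma~\ref{L: GM dual boundary}, and with Lemma~\ref{L: useful coeff big} translating the bottom connecting morphism back into a chain-level boundary coefficient.

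The one place where the paper is more careful than your outline is the preliminary setup. Before running the diagram, the paper first argues (via the ``moving microscope'' observation from Remark~\ref{R: patch}) that $\bd(\Delta_!(\xi))$ is supported in $|\bd\xi|'\cup\Sigma$; this is needed because the local boundary $\bd(\Delta_!^{Z,T}(\xi))$ can pick up artificial simplices in $S_Z$, and one must vary $Z$ to rule these out globally. Correspondingly, the paper requires $Z$ to contain the star neighborhoods of \emph{all} $(i-n)$-simplices having $\tau$ as a face, not merely a neighborhood of $\tau$ itself, so that the global boundary coefficient at $\tau$ really coincides with the local one computed from $\Delta_!^{Z,T}(\xi)$. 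Your phrase ``a common compact neighborhood $Z$ of $\tau$ of the sort required by Definition~\ref{D: int coeff Z}'' underspecifies this; once you strengthen the choice of $Z$ and add a sentence disposing of the case $\tau\not\subset|\bd\xi|'$, your argument is complete and matches the paper's.
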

\begin{proof}

It is sufficient  to demonstrate that this proposition holds with respect to some fixed triangulation $T$ of the type we have been using. Given such a triangulation, we next observe that all boundary computations are local. In other words, if $\tau$ is an $i-n-1$ simplex of $T$, not contained in $\Sigma$, then the coefficient of $\tau$ in $\bd(\Delta_!(\xi))$ depends only on the coefficients of the simplices of $\Delta_!(\xi)$ that contain $\tau$ as a face. Thus if $Z$ is a neighborhood of $\tau$ containing all the $i-n$ simplices adjacent to $\tau$, we can compute the coefficient of $\tau$ in $\bd(\Delta_!(\xi))$ using only $\Delta^{Z,T}_!(\xi)$ (see Remark \ref{R: patch}). Let us first use this observation to show that both $\Delta_!(\bd \xi)$ and $\bd(\Delta_!(\xi))$ are only made up of simplices in $|\bd \xi|'$. This is clear for $\Delta_!(\bd \xi)$ by the construction of $\Delta_!$. For $\bd( \Delta_!(\xi))$, we know from Remark \ref{R: patch} that each  $\Delta^{Z,T}_!(\xi)$ is a chain composed of $i-n$ simplices not contained in $\Sigma_Z$ or $S_Z$ and whose boundary is contained in $|\bd \xi|'_Z\cup \Sigma_Z\cup S_Z$.
Since we work with relative chains, we are not interested in coefficients of simplices in $\Sigma$, while the spaces $S_Z$ are artifacts of the choice of $Z$. Consequently, as we patch the various $\Delta^{Z,T}_!(\xi)$ together to form $\Delta_!(\xi)$, we can get a better look at the simplices in $\bd (\Delta_!(\xi))$ that might possibly lie in $S_Z$ by choosing a more appropriate $Z$. Ultimately, by ``moving our microscope,'' we see that all of  $\bd (\Delta_!(\xi))$ must be contained in $|\bd \xi|'$ (or $\Sigma$). 

Thus, to prove the proposition, it suffices to show for each $\tau\in |\bd \xi|'$ with $\tau\not\subset \Sigma$ that $\Delta^{Z,T}_!(\bd \xi)$ and $(-1)^n\bd(\Delta^{Z,T}_!(\xi))$ agree for a large enough choice of $Z$, say one containing the star neighborhoods of all $i-n$ simplices having $\tau$ as a face. 
For such a $\tau$, we can now utilize the following  diagram:

{\tiny
\begin{diagram}
H_i^{\infty}(|\xi|,|\bd \xi|)&\rTo^{\bd_*}&H_{i-1}^{\infty}(|\bd \xi|)\\
\dTo&&\dTo\\
H_i^{\infty}(|\xi|\cup J_{Z\times Z}\cup D_{Z\times Z},|\bd \xi|\cup J_{Z\times Z}\cup D_{Z\times Z})&\rTo^{\bd^*}&H_{i-1}^{\infty}(|\bd \xi|\cup J_{Z\times Z}\cup D_{Z\times Z},J_{Z\times Z}\cup D_{Z\times Z})\\
\uTo^\cong&&\uTo^\cong\\
H_i(|\xi|_{Z\times Z}\cup J_{Z\times Z}, |\bd \xi|_{Z\times Z}\cup J_{Z\times Z})&\rTo^{\bd_*}&H_{i-1}(|\bd \xi|_{Z\times Z}\cup J_{Z\times Z}, J_{Z\times Z})\\
\uTo^{\D}&&\uTo^{\D}\\
 H^{2n-i}(Z\times Z-(|\bd \xi|_{Z\times Z}\cup J_{Z\times Z}),Z\times Z-(|\xi|_{Z\times Z}\cup J_{Z\times Z}))&\rTo^{d^*}& H^{2n-i+1}(Z\times Z-( J_{Z\times Z}),Z\times Z-(|\bd \xi|_{Z\times Z}\cup J_{Z\times Z}))\\
\dTo^{\Delta^*} &&\dTo^{\Delta^*} \\
 H^{2n-i}(Z-|\bd \xi|_Z'\cup J_Z,Z-(|\xi|_Z'\cup J_Z))&\rTo^{d^*}& H^{2n-i+1}(Z-J_Z,Z-(|\bd \xi|_Z'\cup J_Z))\\
\dTo^{\D}&&\dTo^{\D}\\
H_{i-n}(|\xi|_Z'\cup J_Z,|\bd \xi|_Z'\cup J_Z)&\rTo^{\bd_*}&H_{i-n-1}(|\bd \xi|_Z'\cup J_Z,J_Z)\\
\dTo^{\bd_*}&&\dTo^=\\
H_{i-n-1}(|\bd \xi|_Z'\cup J_Z, J_Z)&\rTo^=&H_{i-n-1}(|\bd \xi|_Z'\cup J_Z,J_Z)\\
&\rdTo&\dTo\\
&&H_{i-n-1}(|\bd \xi|'_Z\cup J_Z,(|\bd\xi|_Z'\cup J_Z)-int(\tau))\\
 &&\uTo^{\cong}\\
&&H_{i-n-1}(\tau, \bd \tau)\\
&&\uTo^\cong\\
&&\Z.
\end{diagram}}

\noindent Except for the squares involving $\D$, this diagram commutes by the standard properties of connecting morphisms. The top square involving $\D$ commutes up to $(-1)^{2n}=1$ by  Lemma  \ref{L: GM dual boundary}, while the lower square involving $\D$ commutes up to $(-1)^n$ by the same lemma. Starting up the upper left and proceeding right then down in the diagram takes the homology class representing $\xi$ to $\Delta^!(\bd \xi)$ and then to its coefficient for $\tau$. On the other hand, proceeding straight downward takes us to the class representing $\Delta^{Z,T}_!(\xi)$ and then to its boundary. More specifically, applying Lemma \ref{L: useful coeff big} with $A=|\xi|'_Z\cup J_Z$, $B=|\bd \xi|'_Z\cup J_Z$, and $C=J_Z$, the $\bd_*$ map here can be thought of as taking   $\Delta^{Z,T}_!(\xi)$ to the piece of its boundary consisting of simplices in $|\bd \xi|'_Z$ but not in $J_Z$. We know that $\tau$ is such a simplex by assumption. The bottom of the diagram then provides the coefficient of $\tau$ according to Remark \ref{R: useful coeff big}. 

It follows that $\tau$ obtains the same coefficient, up to the sign $(-1)^n$, by either route, proving the proposition. 
\end{proof}

The next lemma concerns the behavior of $\Delta_!$ under restriction. Recall\footnotemark that if $X$ is a PL space then any open subset $U$ is also PL and that if $X$ is given a triangulation $T$ then $U$ can be given a triangulation $S$ such that every simplex of $S$ is contained in a simplex of $T$ (see \cite[Section I.1.3]{Bo}). This induces a restriction map $c^{T,\infty}_*(X)\to c_*^{S,\infty}(U)$ that takes each $i$-simplex of $T$ to the formal sum of the compatibly oriented $i$-simplices of $S$ contained in it. Such maps then induce a restriction map $r: C^\infty_*(X)\to C^\infty_*(U)$. 

\footnotetext{\label{F: sub} Here's one way to do this: start with a triangulation $T$ of $X$ and an arbitrary triangulation $S_0$ of $U$ compatible with the PL structure on $U$ inherited from $X$. We can then obtain a cell complex on $U$ whose cells are the intersections of the simplices of $S_0$ with the simplices of $T$ (see \cite[Example 2.8.5]{RS}). Finally, we can subdivide the cell complex on $U$ to a simplicial complex $S$ triangulating $U$ by using the procedure of  \cite[Proposition 2.9]{RS}; note that the cited proposition works on infinite complexes either by choosing a well-ordering of the vertices or simply by using a partial ordering on the vertices which is locally a total ordering on each cell. Each simplex of $S$ will be contained in a cell of the cell complex and thus in a simplex of $T$. }

\begin{lemma}\label{L: restrict umkehr}
Let $U\subset X$ be an open subset. Then there is a commutative diagram
\begin{diagram}
C^{\Delta,\infty}_*((X,\Sigma_X)\times (X,\Sigma_X))&\rTo^{\Delta_!}& C_{*-n}(X,\Sigma_X)\\
\dTo^r&&\dTo^r\\
C^{\Delta,\infty}_*((U,\Sigma_U)\times (U,\Sigma_U))&\rTo^{\Delta_!}& C_{*-n}(U,\Sigma_U).
\end{diagram}
\end{lemma}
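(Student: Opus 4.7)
The plan is to exploit the locality of the construction of $\Delta_!$ worked out in Section \ref{S: local umkehr}. Given an $i$-chain $\xi\in C^{\Delta,\infty}_i((X,\Sigma_X)\times (X,\Sigma_X))$ and an open $U\subset X$, I would first check that $r(\xi)$ really lands in $C^{\Delta,\infty}_i((U,\Sigma_U)\times (U,\Sigma_U))$: this is immediate since $|r(\xi)|\subset |\xi|\cap(U\times U)$ and $|\bd r(\xi)|\subset |\bd\xi|\cap(U\times U)$, and intersecting with $U\times U$ can only decrease the dimensions controlled by the general position hypothesis. So both maps $\Delta_!\circ r$ and $r\circ\Delta_!$ are defined, and it suffices to verify that for every $(i-n)$-simplex $\tau$ in a triangulation of $U$, not contained in $\Sigma_U$, the coefficient of $\tau$ agrees on both sides.

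Next I would set up compatible triangulations: choose a triangulation $\mc T$ of $X\times X$ compatible with the stratification and with $|\xi|$ and $\Delta(X)$ as subcomplexes, let $T$ be the induced triangulation of $X$, and let $S$ be a triangulation of $U$ each of whose simplices is contained in a simplex of $T$ (as in the footnote). By Remark \ref{R: subdiv}, the coefficient of a simplex $\tau$ of $S$ in $r(\Delta_!(\xi))$ equals $I_\tau(\xi)$ computed in $X$ using any suitable triangulation containing $\tau$. On the other hand, $I_\tau(r(\xi))$ is computed intrinsically in $U$. The key observation is that by Proposition \ref{P: indep Z}, the common value $I_\tau$ may be computed using any finite union $Z$ of $n$-simplices containing $\tau$ in its interior; since $U$ is open in $X$, I can choose such a $Z$ (in a sufficiently fine common refinement of $T$ and $S$) so that $Z\subset U$.

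With such a $Z\subset U$, every ingredient in Definition \ref{D: int coeff Z} is intrinsic to $Z$ and $Z\times Z$: the spaces $D_Z$, $S_Z$, $J_Z$, $D_{Z\times Z}$, $J_{Z\times Z}$ depend only on $Z$ and its triangulation, the sets $|\xi|_{Z\times Z}$ and $|\bd\xi|_{Z\times Z}$ coincide with $|r(\xi)|_{Z\times Z}$ and $|\bd r(\xi)|_{Z\times Z}$ since $Z\times Z\subset U\times U$, and the orientation of $U-\Sigma_U$ inherited from $X-\Sigma_X$ agrees with the one used to build the Goresky-MacPherson duality isomorphism in Proposition \ref{P: GM dual}. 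Consequently the entire sequence of excisions, Dold and Goresky-MacPherson duality isomorphisms, and diagonal pullbacks defining $I_\tau^{Z,T}$ produces the same element of $\Z$ whether carried out in $X$ or in $U$. Combined with the independence statements already proven, this gives $I_\tau(\xi)=I_\tau(r(\xi))$, so the two chains agree coefficient by coefficient.

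The main technical obstacle I anticipate is purely bookkeeping about subdivisions: $r$ is defined via a subdivision that need not respect $T$, so I have to first pass to a common refinement of $T$ and the triangulation on $U$ and invoke Remark \ref{R: subdiv} to argue that the intersection coefficients are invariant under such subdivision; once that is in hand, the locality argument above is routine. I would also briefly observe that the identification $r(\Delta_!(\xi))=\Delta_!(r(\xi))$ as elements of $C^\infty_{i-n}(U,\Sigma_U)$ is unambiguous because both sides are determined by their coefficients on simplices of a common refinement not lying in $\Sigma_U$, which is precisely what the above comparison yields.
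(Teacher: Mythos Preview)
Your proposal is correct and follows essentially the same approach as the paper: reduce to comparing intersection coefficients simplex-by-simplex, then use the independence of $Z$ (Proposition \ref{P: indep Z}) and Remark \ref{R: subdiv} to choose a neighborhood $Z\subset U$ in a sufficiently fine common subdivision, so that the local computations in $X$ and in $U$ literally coincide. The paper makes the subdivision bookkeeping you flagged explicit by passing to an iterated barycentric subdivision $T'$ of $T$ containing a simplex $\tau\subset\sigma$ whose star lies in $U$, then constructing a compatible subdivision $S'$ of $S$ and closing with a small commutative diagram comparing $H_i^\infty(|\xi|,|\bd\xi|)$ and $H_i^\infty(|r(\xi)|,|r(\bd\xi)|)$ via the excisions down to $Z\times Z$.
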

\begin{proof}
Let $\xi\in C^{\Delta,\infty}_i((X,\Sigma_X)\times (X,\Sigma_X))$, let  $T$ be a triangulation $T$ of $X$  satisfying our usual conditions as given above, and let $S$ be a triangulation of $U$ such that  every simplex of $S$ is contained in some simplex of $T$. Let $\sigma$ be any $i-n$ simplex of $S$ contained in $|\xi|'$ but not contained in $\Sigma_U$. As $\dim(|\xi|'\leq i-n)$ and as $|\xi|'$ is triangulated by $T$, any such $\sigma$ is contained in an $i-n$ simplex $\gamma$ of $|\xi|'$ in $T$.
By the arguments of Remark \ref{R: subdiv}, we see that the coefficient of $\sigma$ in $r\Delta_!(\xi)$ will equal $I_\gamma(\xi)$ (which can be computed as $I^{Z,T}_\gamma(\xi)$ using $T$ and some appropriate choice of $Z$). We must show that this is equal to the coefficient of $\sigma$ in  $\Delta_!r(\xi)$, which is $I_\sigma(r(\xi))$ (which can be computed as $I^{Y,S}_\gamma(\xi)$ using $S$ and some appropriate choice of $Y$). 
As  $\sigma$ in $S$ is an arbitrary simplex of $|\xi|'\cap U$, this will suffice to prove the lemma. 

Next, we take a sufficiently iterated barycentric subdivision $T'$ of $T$ so that
\begin{enumerate}
\item there is an $i-n$ simplex $\tau$ of $T'$ contained within $\sigma$ and
\item the star neighborhood of $\tau$ in $T'$ is contained in $U$. 
\end{enumerate}
This is possible by the arguments of \cite[Section 15]{MK}. Then let $S'$ be a subdivision of $S$ such that $\tau$ is also a simplex of $S'$ and such that every simplex of $S'$ is contained in some simplex of $T'$. We can find such an $S'$ by the same procedure described in Footnote \ref{F: sub}: form the cell complex on $U$ whose cells are the intersections of the simplices of $S$ and $T'$ (see \cite[Example 2.8.5]{RS}) and then create the simplicial complex obtained from this cell complex but with the same vertices via the procedure of \cite[Proposition 2.9]{RS}. As $\tau$ is already a simplex of $T'$ contained in the simplex $\sigma$ of $S$, $\tau$ will be a cell of the cell complex and then remain a simplex in the triangulation $S'$. As every simplex of $S'$ is contained in a simplex of $T'$, the star neighborhood of $\tau$ in $S'$ will be a subset of the star neighborhood of $\tau$ in $T'$.

Now, let the star neighborhood of $\tau$ in $T'$ be our space $Z$. As $Z\subset U$, it will also be triangulated as a union of $n$-simplexes in $S'$. We will use this same $Z$ for the computation of the intersection coefficients $I^{Z,T'}_{\tau}(\xi)$ in $X$ and $I^{Z,S'}_{\tau}(r(\xi))$ in $U$. By Remark \ref{R: subdiv}, 
$I^{Z,T'}_{\tau}(\xi)=I_\gamma(\xi)$, as $\tau\subset \sigma\subset\gamma$,  and $I^{Z,S'}_{\tau}(r(\xi))=I_{\sigma}(r(\xi))$. So it suffices to show that $I^{Z,T'}_{\tau}(\xi)=I^{Z,S'}_{\tau}(r(\xi))$.
This follows from the following commutative diagram and the definitions of these intersection coefficients, noting that the spaces $Z$, $Z\times Z$, $J_Z$, $J_{Z\times Z}$, and $D_{Z\times Z}$ (as a subspace of $X\times X$) do not depend on the particular choice of triangulation used to define $Z$:

{\footnotesize
\begin{diagram}
H_i^{\infty}(|\xi|,|\bd \xi|)&\rTo^r&H_i^{\infty}(|r(\xi)|,|r(\bd \xi)|))\\
\dTo&&\dTo\\
H_i^{\infty}(|\xi|\cup J_{Z\times Z}\cup D_{Z\times Z},|\bd \xi|\cup J_{Z\times Z}\cup D_{Z\times Z})&\rTo^r&H_i^{\infty}(U\cap(|\xi|\cup J_{Z\times Z}\cup D_{Z\times Z}),U\cap(|\bd \xi|\cup J_{Z\times Z}\cup D_{Z\times Z}))\\
\uTo^\cong&&\uTo^\cong\\
H_i(|\xi|_{Z\times Z}\cup J_{Z\times Z}, |\bd \xi|_{Z\times Z}\cup J_{Z\times Z})&\rTo^=&H_i(|\xi|_{Z\times Z}\cup J_{Z\times Z}, |\bd \xi|_{Z\times Z}\cup J_{Z\times Z}).
\end{diagram}}

Note that the excisions are both still valid as all of the spaces in the bottom row are contained within $U$. 
\end{proof}

\subsection{The cross product}\label{S: cross}

In addition to the umkehr map, the other main ingredient of the intersection product is the PL chain cross product $\epsilon: C_*^\infty(X)\otimes C_*^\infty(X)\to C_*^{\infty} (X\times X)$; see \cite{McC}, \cite{GBF18}, or \cite[Section 5.2.2]{GBF35}. The cross products in the citations are defined primarily for compact chains, but as the cross product construction can be defined locally using simplicial chain representatives, there is no difficulty extending it in the obvious way to locally finite chains. This map induces a relative product $\epsilon: C_*^\infty(X,\Sigma)\otimes C_*^\infty(X,\Sigma)\to C_*^\infty((X,\Sigma)\times (X,\Sigma))$.  

When an element of  $C_*^\infty(X)\otimes C_*^\infty(X)$ can be represented as $\xi\otimes \eta$, we sometimes use the notation $\epsilon(\xi\otimes \eta)=\xi\times \eta$. 

\begin{lemma}\label{L: restrict cross}
Let $U\subset X$ be an open subset. Then there is a commutative diagram
\begin{diagram}
C_*^\infty(X)\otimes C_*^\infty(X)&\rTo^{r\otimes r}& C_*^\infty(U)\otimes C_*^\infty(U)\\
\dTo^\epsilon&&\dTo^\epsilon\\
C^{\infty}_*(X\times X)&\rTo^r&C_*^\infty(U\times U),
\end{diagram}
where the horizontal maps are induced by restriction.
This induces a diagram 
\begin{diagram}
C_*^\infty(X,\Sigma_X)\otimes C_*^\infty(X,\Sigma_X)&\rTo^{r\otimes r}& C_*^\infty(U,\Sigma_U)\otimes C_*^\infty(U,\Sigma_U)\\
\dTo^\epsilon&&\dTo^\epsilon\\
C^{\infty}_*((X,\Sigma_X)\times (X,\Sigma_X))&\rTo^r&C_*^\infty((U,\Sigma_U)\times (U,\Sigma_U)),
\end{diagram}
\end{lemma}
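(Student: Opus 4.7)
The plan is to reduce the lemma to a local, simplex-by-simplex verification by exploiting the locality of both the restriction map and the PL cross product. Both operations are defined as direct limits of constructions on simplicial chain complexes, so it suffices to produce compatible triangulations on all four corners of the diagram with respect to which the diagram commutes strictly at the simplicial level; passing to the PL direct limit then yields the commutativity of the stated diagram.

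More concretely, first I would fix a triangulation $T$ of $X$, extend it to a triangulation $\mc T$ of $X\times X$ refining the product cell structure induced by $T\times T$ (using \cite[Theorem 3.6.C]{HUD} as is done earlier in the paper), and then, using the procedure described in Footnote \ref{F: sub}, choose a triangulation $S$ of $U$ such that every simplex of $S$ is contained in a simplex of $T$, together with a triangulation $\mc S$ of $U\times U$ refining $\mc T|_{U\times U}$ and such that every simplex of $\mc S$ lies in a simplex of $\mc T$. Given $\xi\in c_*^{T,\infty}(X)$ and $\eta\in c_*^{T,\infty}(X)$ and a pair of $T$-simplices $\sigma$ and $\tau$ appearing in them, the PL chain $\sigma\times\tau\in C_*^\infty(X\times X)$ is represented as an explicit simplicial cycle in any triangulation of $\sigma\times\tau$ compatible with the vertex data, and in particular has a well-defined simplicial representative in $\mc T$; this representative depends only on the PL structure of the product cell $\sigma\times\tau$. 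Restricting this representative to $\mc S$ yields the formal sum of compatibly oriented simplices of $\mc S$ lying inside $\sigma\times\tau$, which is exactly the simplicial cross product in $\mc S$ of the restrictions $r(\sigma)$ and $r(\tau)$, since the restriction $r(\sigma)\times r(\tau)$ decomposes as the sum of $\sigma'\times\tau'$ over simplices $\sigma'\subset\sigma$ and $\tau'\subset\tau$ of $S$, and each piece $\sigma'\times\tau'$ is itself represented by the $\mc S$-simplices it contains. The commutativity on simplicial generators then follows by bilinearity.

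Passing to the direct limit over refinements of $T$ and $\mc T$ (and noting that $S$ and $\mc S$ may themselves be refined independently to yield all triangulations of $U$ and $U\times U$ of the relevant form, using the same cell/subdivision technique as in Footnote \ref{F: sub}), the simplicial commutativity becomes the commutativity of the first square for PL locally finite chains. For the relative version, observe that $\Sigma_U=\Sigma_X\cap U$ by the local-product structure of the stratification, so the restriction map descends to a map of pairs $(X,\Sigma_X)\to(U,\Sigma_U)$ and, on $X\times X$, sends $(\Sigma_X\times X)\cup(X\times\Sigma_X)$ into $(\Sigma_U\times U)\cup(U\times\Sigma_U)$. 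Since the cross product $\epsilon$ also sends chains supported in $\Sigma_X$ on either factor into the relative zero subcomplex of $C_*^\infty((X,\Sigma_X)\times(X,\Sigma_X))$, both squares commute after passing to the relative quotients.

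The main obstacle is bookkeeping with triangulations of the product spaces: making sure that a single $\mc S$ can be chosen refining $\mc T|_{U\times U}$ while $S$ simultaneously refines $T|_U$ in a way compatible with the diagonal appearance of these triangulations in the cross product. This is a purely combinatorial issue that is handled by iterating the cell-complex-to-simplicial-complex procedure of Footnote \ref{F: sub}, first on $U$ and then on $U\times U$ using the product cell structure of $S\times S$ together with $\mc T|_{U\times U}$; beyond this setup, every step in the verification is tautological from the local definitions of restriction and the PL cross product.
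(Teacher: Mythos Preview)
Your approach is correct but takes a different route from the paper. The paper's proof avoids triangulations entirely: it invokes the characterization (from \cite[Section II.1]{Bo}) that a PL chain is determined by its support, the support of its boundary, and its local sheaf values $[\xi]_x\in H_i(|\xi|,|\xi|-\{x\})$. The paper then verifies, for a generator $\xi\otimes\eta$, that $r\epsilon(\xi\otimes\eta)$ and $\epsilon(r(\xi)\otimes r(\eta))$ have the same support (an elementary set identity), the same boundary support (same identity applied to $\bd(\xi\times\eta)$), and the same local values (a small commutative square of local homology groups, with the horizontal maps being excision isomorphisms). No compatible system of triangulations is ever constructed.

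Your argument instead builds the whole scaffolding of compatible triangulations $T$, $\mc T$, $S$, $\mc S$ and checks equality at the simplicial level before passing to the direct limit. This is valid, and the key step---that both sides are the sum of compatibly oriented top simplices of $\mc S$ inside $(\sigma\cap U)\times(\tau\cap U)$---is essentially the same locality principle the paper uses, just expressed through a chosen triangulation rather than through the intrinsic support/local-value data. The paper's route is shorter and sidesteps the combinatorial bookkeeping you flag as the ``main obstacle''; your route has the virtue of staying close to the direct-limit definition of PL chains and making the comparison completely explicit. Either way the relative statement follows by the same observation you give.
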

\begin{proof}
The groups $C_*^\infty(X)\otimes C_*^\infty(X)$ in any fixed degree are generated by elements of the for $\xi\otimes \eta$ (though not every element can be written in this form). Suppose $\xi$ is an $i$-chain and $\eta$ is a $j$-chain. 
As observed in \cite[Section II.1]{Bo}, a PL chain $\xi$ is completely determined by $|\xi|$, $|\bd \xi|$, and the local sheaf values $[\xi]_x\in (\mc H_i)_x\cong H_i(|\xi|,|\xi|-\{x\})\cong  H^\infty_i(|\xi|,|\xi|-\{x\})$ determined by the class $[\xi]\in H_i(|\xi|,|\bd \xi|)$ as $x$ varies in $|\xi|-|\bd \xi|$. 

Noting that $|r(\xi)|=|\xi|\cap  U$ and as $|\xi\times \eta|=|\xi|\times |\eta|$, we have 
\begin{align*}
|r(\xi\times \eta)|&=|\xi\times \eta|\cap (U\times U)\\
&=(|\xi|\times |\eta|)\cap (U\times U)\\
&=(|\xi|\cap U)\times (|\eta|\cap U)\\
&=|r(\xi)|\times |r(\eta)|,
\end{align*}
and so $r\epsilon(\xi\otimes \eta)$ and $\epsilon(r(\xi)\otimes r(\eta))$ have the same support. 

Similarly, as $r$ and $\epsilon$ are chain maps and $|\bd(\xi\otimes \eta)|=(|\bd \xi|\times |\eta|)\cup (|\xi|\times |\bd \eta|)$, we have
\begin{align*}
|\bd r\epsilon(\xi\otimes \eta)|&=|r\epsilon((\bd \xi)\otimes \eta\pm  \xi\otimes \bd \eta)|\\
&=[(|\bd \xi|\times |\eta|)\cup (|\xi|\times |\bd \eta|)]\cap(U\times U)\\
&=(|r(\bd \xi)|\times |r(\eta)|)\cup (|r(\xi)|\times |r(\bd \eta)|)\\
&=(|\bd r( \xi)|\times |r(\eta)|)\cup (|r(\xi)|\times |\bd r(\eta)|)\\
&=|\bd( r(\xi)\times r(\eta))|\\
&=|\bd \epsilon(r\otimes r)(\xi\times \eta)|.
\end{align*}
Do the boundaries of $r\epsilon(\xi\otimes \eta)$ and $\epsilon(r(\xi)\otimes r(\eta))$ have the same support.

It remains to check the local sheaf values, but clearly, via excisions and using standard local chain representatives, we have a commutative diagram at each point $(x,y)\in |r(\xi\times \eta)|-|\bd r(\xi\times \eta)|$:

\begin{diagram}
H_i(|\xi|, |\xi|-\{x\})\otimes H_j(|\eta|, |\eta|-\{y\})&\rTo^\cong &H_i(|r\xi|, |r\xi|-\{x\})\otimes H_j(|r\eta|, |r\eta|-\{y\})\\
\dTo^\epsilon&&\dTo^\epsilon\\
H_{i+j}(|\xi|\times |\eta|,|\xi|\times |\eta|-\{(x,y)\})&\rTo^\cong&H_{i+j}(|r\xi|\times |r\eta|,|r\xi|\times |r\eta|-\{(x,y)\})
\end{diagram}

\end{proof}

\subsection{The chain-level intersection product}\label{S: pairing}

We can now define our intersection product:

\begin{definition}\label{D: int pairing}
Let $X$ be an oriented $n$-dimensional PL stratified pseudomanifold.
Let the \emph{domain} $\mf G^\infty_*(X,\Sigma)$ be the subcomplex of $C_*^\infty(X,\Sigma)\otimes C_*^\infty(X,\Sigma)$ defined by $$\mf G^\infty_*(X,\Sigma)=\epsilon^{-1}(C_*^{\Delta,\infty}((X,\Sigma)\times (X,\Sigma))).$$ 
 The \emph{intersection product} is defined to be
$$\mu=\Delta_!\circ \epsilon:\mf G^\infty_*(X,\Sigma)\to C_{*-n}^\infty(X,\Sigma).$$ 
\end{definition}

We note that $\mu$ is a chain map of degree $-n$ as  $\Delta_!$ is a chain map of degree $-n$ by Propositions \ref{P: chain map} and \ref{P: additivity}, while $\epsilon$ is a degree $0$ chain map by  \cite[Corollary 5.2.10]{GBF35}.

\begin{remark}\label{R: McC comp}
We will see below in Proposition \ref{P: compact} that when $X$ is compact we can let every $Z$ in the definition of $\Delta_!$ be $X$ itself and then $\mu$ reduces to the map $\mu_2$ of \cite{GBF18} up to the convention changes noted in the introduction. If $X$ is moreover a compact oriented PL manifold, i.e.\ $\Sigma=\emptyset$, then this map is isomorphic to the map $\mu_2$ of McClure \cite{McC}, again up to conventions. This isomorphism is not quite immediate due to some slight differences between the definitions of the umkehr maps in \cite{McC} and \cite{GBF18} (in particular, the uses of $\mf D$ versus $\D$), but it is not hard to show that they're isomorphic using the tools of Section \ref{S: dualities} and the discussion of McClure's construction in \cite[Section 5]{McC}. By \cite[Section 4.3]{GBF18}, these maps $\mu_2$ induce the Goresky-MacPherson intersection product.
\end{remark}

\begin{proposition}\label{P: qi}
The inclusion $\mf G^{\infty}_*(X,\Sigma)\to C_*^\infty(X,\Sigma)\otimes C_*^\infty(X,\Sigma)$ induces an isomorphism on homology.
\end{proposition}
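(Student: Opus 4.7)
The plan is to prove this via a general-position \emph{moving lemma}, adapting the compact-case argument of McClure \cite{McC} and Friedman \cite{GBF18} to the locally finite setting via exhaustion of $X$ by compact subspaces. The key observation is that the general position condition of Definition \ref{D: Delta} only requires dimensional transversality with $\Delta(X)$ \emph{away from} $\Sigma$; since $X-\Sigma$ is a PL $n$-manifold and we work with relative chains mod $\Sigma$, this reduces to the classical PL general position condition on a PL manifold, to which the standard machinery applies. It is also worth noting that our condition is weaker than that in \cite{GBF18}, since we do not need controlled intersections with the singular strata themselves (cf.\ Remark \ref{R: weak GP}), so the compact case is at least as tractable as in \cite{GBF18}.

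First, I would establish the compact analogue: for $X$ compact and $\zeta \in C_*(X,\Sigma)\otimes C_*(X,\Sigma)$, there is a chain homotopy $\zeta \sim \zeta'$ to some $\zeta' \in \mf G_*(X,\Sigma)$. The homotopy is built by choosing a simplicial representative of $\epsilon(\zeta)$ on a sufficiently fine triangulation, then applying a small PL isotopy supported in $X-\Sigma$; by the standard PL general position theorem applied to the manifold $X-\Sigma$ and the submanifold $\Delta(X-\Sigma)\subset (X-\Sigma)\times(X-\Sigma)$, a generic such isotopy moves the cross-product chain into the dimensional transversality required by Definition \ref{D: Delta}, while the prism chain of the isotopy yields the chain homotopy at the tensor level after pulling back through $\epsilon$. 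Applied to a cycle this gives surjectivity on homology; applied to a bounding chain it gives injectivity.

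To pass to locally finite chains, I would exhaust $X$ by compact PL subspaces $K_1 \subset K_2 \subset \cdots$ with $K_i \subset \mathrm{int}(K_{i+1})$ and $\bigcup_i K_i = X$. Given $\zeta \in C_*^\infty(X,\Sigma)\otimes C_*^\infty(X,\Sigma)$, I inductively apply the compact moving lemma to produce chain homotopies $h_i$, each supported in a small open neighborhood of $K_{i+1}-\mathrm{int}(K_{i-1})$, such that the partial sum $\zeta_i := \zeta + \bd\bigl(\textstyle\sum_{j\le i} h_j\bigr) + \bigl(\textstyle\sum_{j\le i} h_j\bigr)\bd$ lies in $\mf G^\infty_*(X,\Sigma)$ over a neighborhood of $K_i$. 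Since the supports of the $h_i$ form a locally finite family in $X\times X$, the infinite sum $H := \sum_i h_i$ and the limit chain $\zeta_\infty$ are well-defined locally finite chains, yielding surjectivity on homology; the same construction applied to a bounding chain gives injectivity. The main obstacle is the patching: each $h_i$ must be chosen so as not to disturb general position already achieved over $K_{i-1}$ while still landing in $\mf G^\infty_*$ over $K_i$. This is handled by the standard device of supporting the $i$-th isotopy in an open set disjoint from the prior perturbation regions, which is possible because PL general position is open and dense in the space of small PL perturbations supported in any prescribed open set containing the chain in question.
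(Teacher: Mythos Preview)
Your strategy is correct in spirit and follows the same general-position-by-isotopy idea as the paper, but you are working harder than necessary in the non-compact step. The paper's proof is essentially a one-line citation: Theorem 3.5 of \cite{GBF18} proves the compact-chain analogue, and the isotopies used there are \emph{proper} (as recorded in \cite[Remark 3.8]{GBF18}). A proper PL isotopy $\phi_t$ of $X$ carries locally finite chains to locally finite chains and produces a locally finite prism, so the argument transfers verbatim to $C_*^\infty$ without any exhaustion or patching. The paper also notes that the stratified general position achieved in \cite{GBF18} is stronger than the relative condition of Definition~\ref{D: Delta}, so no separate adjustment for working mod $\Sigma$ is required. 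Your exhaustion scheme is a legitimate alternative, but the single proper isotopy buys you the whole non-compact case at once.

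Two points in your sketch deserve care if you do pursue the exhaustion route. First, you should not ``pull back through $\epsilon$'': the cross product is not injective, so the chain homotopy must be constructed directly at the tensor level by applying the isotopy to one factor, i.e.\ via $(\phi_t)_*\otimes\id$ with prism $P\otimes\id$, and then checking $\epsilon\circ((\phi_1)_*\otimes\id)=(\phi_1\times\id)_*\circ\epsilon$. This is how McClure and \cite{GBF18} actually proceed. Second, your patching description contains a tension: you first say $h_i$ is supported near $K_{i+1}-\mathrm{int}(K_{i-1})$, then say it is supported disjointly from prior perturbation regions; these overlap near $\partial K_{i-1}$. This can be repaired (general position is an open condition and survives sufficiently small further perturbation over the overlap), but the proper-isotopy approach avoids the bookkeeping entirely.
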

\begin{proof}
This follows directly as a special case of the proof Theorem 3.5 of \cite{GBF18}, which is the equivalent statement for compact chains. As noted in \cite[Remark 3.8]{GBF18}, that proof consists of pushing chains into stratified general position 
(see Definition \ref{D: sgp}, below)  by proper isotopies, and so the arguments work just as well for locally finite chains. The proof in \cite{GBF18} also deals only with absolute chains, not relative chains mod $\Sigma$, but this only means that the proof there is stronger than necessary here, providing stronger general position in the singular strata than is strictly needed to satisfy Definition \ref{D: Delta}.
\end{proof}

This intersection product is also compatible with restriction: 

\begin{proposition}\label{P: res}
If $U\subset X$ is an open subset then we obtain a commutative diagram

\begin{diagram}
 C_*^\infty(X,\Sigma_X)\otimes C_*^\infty(X,\Sigma_X)&\lInto&\mf G^\infty_*(X,\Sigma_X)&\rTo^{\mu}& C_{*-n}^\infty(X,\Sigma_X)\\
\dTo&&\dTo&&\dTo\\
 C_*^\infty(U,\Sigma_U)\otimes C_*^\infty(U,\Sigma_U)&\lInto&\mf G^\infty_*(U, \Sigma_U)&\rTo^{\mu}& C_{*-n}^\infty(U,\Sigma_U),
\end{diagram}
where the vertical maps are induced by restriction.
\end{proposition}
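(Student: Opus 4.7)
The plan is to deduce Proposition \ref{P: res} as a formal consequence of the two restriction compatibilities already proved in this section, namely Lemma \ref{L: restrict cross} (for the cross product $\epsilon$) and Lemma \ref{L: restrict umkehr} (for the umkehr map $\Delta_!$), together with a short verification that the domain subcomplex $\mf G^\infty_*$ is preserved by restriction.

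First I will verify the left square, i.e.\ that the restriction map $r\otimes r$ carries $\mf G^\infty_*(X,\Sigma_X)$ into $\mf G^\infty_*(U,\Sigma_U)$. By definition of the domain, this amounts to showing that if $\alpha\in C_*^\infty(X,\Sigma_X)\otimes C_*^\infty(X,\Sigma_X)$ satisfies $\epsilon(\alpha)\in C^{\Delta,\infty}_*((X,\Sigma_X)\times(X,\Sigma_X))$, then $\epsilon((r\otimes r)(\alpha))\in C^{\Delta,\infty}_*((U,\Sigma_U)\times(U,\Sigma_U))$. Using Lemma \ref{L: restrict cross} to replace $\epsilon(r\otimes r)$ with $r\epsilon$, this reduces to showing that the restriction map $r$ sends $C^{\Delta,\infty}_*((X,\Sigma_X)\times(X,\Sigma_X))$ into $C^{\Delta,\infty}_*((U,\Sigma_U)\times(U,\Sigma_U))$. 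But this follows immediately from Definition \ref{D: Delta}: since $|r(\xi)|\subset|\xi|\cap(U\times U)$ and $|\bd r(\xi)|=|r(\bd\xi)|\subset|\bd\xi|\cap(U\times U)$, we have $|r(\xi)|'\subset|\xi|'\cap U$ and $|\bd r(\xi)|'\subset|\bd\xi|'\cap U$, while $\Sigma_U=\Sigma_X\cap U$, so the dimension bounds on $|\xi|'-\Sigma_X$ and $|\bd\xi|'-\Sigma_X$ are inherited by the corresponding subsets for $r(\xi)$.

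The commutativity of the left square is then tautological once the target has been identified: both composites $\mf G^\infty_*(X,\Sigma_X)\to C^\infty_*(U,\Sigma_U)\otimes C^\infty_*(U,\Sigma_U)$ are obtained by applying $r\otimes r$ followed by inclusion.

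For the right square, the identity $r\circ\mu=\mu\circ(r\otimes r)$ on $\mf G^\infty_*(X,\Sigma_X)$ follows by chaining the two lemmas: Lemma \ref{L: restrict cross} gives $r\circ\epsilon=\epsilon\circ(r\otimes r)$, and Lemma \ref{L: restrict umkehr} gives $r\circ\Delta_!=\Delta_!\circ r$ on $C^{\Delta,\infty}_*((X,\Sigma_X)\times(X,\Sigma_X))$. Composing,
\[
r\circ\mu = r\circ\Delta_!\circ\epsilon = \Delta_!\circ r\circ\epsilon = \Delta_!\circ\epsilon\circ(r\otimes r) = \mu\circ(r\otimes r),
\]
which is what the right square asserts. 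There is no real obstacle here, since the substantive work has already been done in the two cited lemmas; the only point requiring care is the domain-preservation check in the first paragraph, which is settled by the elementary observation that restriction can only decrease the support of a chain and its boundary.
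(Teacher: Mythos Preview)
Your proposal is correct and follows essentially the same approach as the paper's proof: the right square is obtained by composing Lemmas \ref{L: restrict cross} and \ref{L: restrict umkehr}, and the left square follows from Lemma \ref{L: restrict cross} together with the observation that restriction carries $C^{\Delta,\infty}_*((X,\Sigma_X)\times(X,\Sigma_X))$ into $C^{\Delta,\infty}_*((U,\Sigma_U)\times(U,\Sigma_U))$. You have simply spelled out in more detail the support-and-dimension check that the paper asserts as a fact.
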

\begin{proof}
Commutativity on the right follows immediately by putting together Lemmas \ref{L: restrict umkehr} and \ref{L: restrict cross}. Commutativity on the left follows from Lemma \ref{L: restrict cross} and the fact that the restriction map takes $C^{\Delta,\infty}_*((X,\Sigma_X)\times (X,\Sigma_X))$ to $C^{\Delta,\infty}_*((U,\Sigma_U)\times (U,\Sigma_U))$.
\end{proof}

Together, Definition \ref{D: int pairing}, Propositions \ref{P: qi} and \ref{P: res}, and Remark \ref{R: McC comp} demonstrate all the claims of Theorem \ref{T: main} concerning the intersection product \eqref{E1}.

\section{The intersection product for intersection homology}\label{S: int pairings}

In this section, we briefly review the definition of intersection chains and then demonstrate how an intersection product of intersection chains follows from the work of the previous sections. A thorough introduction to intersection homology can be found in \cite{GBF35}. 

\subsection{PL intersection chains}\label{S: IC}

If $X$ is a PL stratified pseudomanifold, a \emph{perversity} on $X$ is a function $$\bar p:\{\text{singular strata of $X$}\}\to \Z.$$ Recall from Section \ref{S: PL chains} that if $\xi$ is a chain then $|\xi|$ denotes the union in some triangulation (compatible with $\xi$ and with the stratification of $X$) of the $i$-simplices of $\xi$ that are not contained in $\Sigma$.
The \emph{perversity $\bar p$ PL intersection chain complex} $I^{\bar p}C_*(X)$ is defined as the subcomplex of  $C_*(X,\Sigma)$ consisting of chains  $\xi\in C_i(X,\Sigma)$ such that the following conditions are satisfied for each singular stratum $\mc Z$ of $X$:
\begin{enumerate}
\item $\dim(|\xi|\cap \mc Z)\leq i-\codim(\mc Z)+\bar p(\mc Z)$,
\item $\dim(|\bd \xi|\cap \mc Z)\leq i-1-\codim(\mc Z)+\bar p(\mc Z)$.
\end{enumerate}
The groups $I^{\bar p}C_*(X)$  form a chain complex due to the second condition, and the homology groups are the \emph{intersection homology groups}, denoted $I^{\bar p}H_*(X)$.  Similarly, $I^{\bar p}C^\infty_*(X)$ and  $I^{\bar p}H^\infty_*(X)$ are defined identically by imposing the dimension conditions on chains in $C_*^\infty(X,\Sigma)$. 

The reader familiar with the original definition of intersection chains, e.g.\ from Goresky-MacPherson \cite{GM1}, might be surprised to see them defined here as a subcomplex of the  \emph{relative} chain complex, but it is shown in \cite[Section 6.2]{GBF35} that these intersection chain complexes  are isomorphic to those of Goresky and MacPherson provided that $\bar p(\mc Z)\leq \codim(\mc Z)-2$ for all singular strata $\mc Z$, which is the case for all perversities treated in \cite{GM1}. Roughly speaking, for these stricter perversities the chains are sufficiently forced away from  $\Sigma$ that, even though they may still intersect $\Sigma$, they are not affected by what happens internal to $\Sigma$. Conversely, for larger perversities, the relative approach is critical for obtaining agreement with sheaf-theoretic intersection homology; see  \cite{GBF26, GBF35} for expository discussions.

\subsection{The intersection product}\label{S: IC pairing}

In this section, we demonstrate how the intersection product $\mu$ of the prior section restricts to provide an intersection product on intersection chains. In addition to limiting ourselves to intersection chains, we will need to impose a stronger notion of general position (cf.\ Remark \ref{R: weak GP}). This \emph{stratified general position} is formulated in general in \cite[Definition 3.1]{GBF18}; we here restrict to the version we will need:

\begin{definition}\label{D: sgp}
Let $X$ be a PL stratified pseudomanifold and $\Delta:X\to X\times X$ the diagonal inclusion $\Delta(x)=(x,x)$. We will say that the PL subset $A\subset X\times X$ is in \emph{stratified general position} (with respect to $\Delta$) if for each stratum $\mc Z\subset X$ we have 
$$\dim(A\cap \Delta(\mc Z))\leq \dim(A\cap (\mc Z\times \mc Z))-\dim(\mc Z).$$
In other words, noting that $\dim(\mc Z\times \mc Z)=2\dim(\mc Z)$, we require that, for each stratum $\mc Z$, the PL spaces $A\cap (\mc Z\times \mc Z)$ and $\Delta(\mc Z)$ are in general position in the manifold $\mc Z\times \mc Z$.

If $\xi\in C^{\infty}_i((X,\Sigma)\times (X,\Sigma))$, then we will say that $\xi$ is in stratified general position if $|\xi|$ is in stratified general position.  
\end{definition}

\begin{remark}\label{R: GP int}
Notice that if $\xi\in C^{\infty}_i((X,\Sigma)\times (X,\Sigma))$ is in stratified general position and if $R$ is the union of  regular strata of $X$, then we have $\dim(\Delta^{-1}(|\xi|)-\Sigma)=\dim(|\xi|\cap \Delta(R))\leq i-n$. So if $|\xi|$ and $|\bd\xi|$ are in stratified general position, then $\xi\in C_i^{\Delta,\infty}((X,\Sigma)\times (X,\Sigma))$. 
\end{remark}

Next, let $P=(\bar p_1,\bar p_2)$ be a pair of perversities.

\begin{definition}\label{D: GP}
The \emph{domain}  $G^{\infty,P}_*(X)$ of the intersection product  is defined to be the subcomplex of $I^{\bar p_1}C_*^\infty(X)\otimes I^{\bar p_1}C_*^\infty(X)$ consisting of chains $\xi$ such that $|\epsilon(\xi)|$ and $|\epsilon(\bd \xi)|$ are in stratified general position. We have $G^{\infty,P}_*(X)\subset \mf G^{\infty}_*(X, \Sigma)$ by Remark \ref{R: GP int}.
Let $\mu:G^{\infty,P}_*(X)\to C_{*-n}^\infty(X,\Sigma)$ be the restriction of the intersection product. 
\end{definition}

Now that we have constructed the intersection product for intersection chains, we can verify its key properties:

\begin{proposition}\label{P: iqi}
The inclusion $G^{\infty,P}_*(X)\to I^{\bar p_1}C_*^\infty(X)\otimes I^{\bar p_2}C_*^\infty(X)$ induces an isomorphism on homology.
\end{proposition}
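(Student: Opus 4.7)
The plan is to mimic the proof of Proposition \ref{P: qi}, but now restricting attention throughout to the intersection chain subcomplex. Recall that the argument for Proposition \ref{P: qi} proceeded by noting that the corresponding statement for compact chains, \cite[Theorem 3.5]{GBF18}, was proved by moving chains into stratified general position via proper PL isotopies, and that such an argument extends verbatim to the locally finite setting because the isotopies can be taken to be proper. The same strategy should apply here, with the additional observation that the isotopies constructed in \cite[Section 3]{GBF18} are already formulated in a way that preserves the intersection chain conditions on each stratum, since they move simplices along the strata rather than across them.

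More concretely, I would proceed in two stages. First, for surjectivity on homology, I would take a cycle $z \in I^{\bar p_1}C_*^\infty(X)\otimes I^{\bar p_2}C_*^\infty(X)$ and write it as a finite sum of simple tensors $\sum_k \xi_k \otimes \eta_k$. Applying the stratified general position results from \cite[Section 3]{GBF18} in a locally finite fashion (using a locally finite sequence of PL isotopies subordinate to a suitable cover of $X$), I would replace each $\xi_k$ and $\eta_k$ by a PL isotopic intersection chain such that the cross product $\epsilon$ of the resulting sum lies in stratified general position both in its support and in the support of its boundary. Since PL isotopies induce chain homotopies preserving the intersection chain conditions (by \cite[Section 3]{GBF18}), the new cycle is homologous to $z$ in $I^{\bar p_1}C_*^\infty(X)\otimes I^{\bar p_2}C_*^\infty(X)$ and lies in $G^{\infty,P}_*(X)$ by Remark \ref{R: GP int} and Definition \ref{D: GP}.

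Second, for injectivity, I would take a cycle $z \in G^{\infty,P}_*(X)$ that bounds in $I^{\bar p_1}C_*^\infty(X)\otimes I^{\bar p_2}C_*^\infty(X)$, say $z = \bd w$, and apply the same isotopy machinery to move $w$ (while fixing $z = \bd w$, or at least fixing it up to a controlled error) so that the resulting $w'$ lies in $G^{\infty,P}_*(X)$ with $\bd w' = z$. This second step typically uses a relative version of the general position theorem where the boundary is already in general position, as is standard in the Goresky-MacPherson and McClure constructions.

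The main obstacle, as in the extension from Proposition \ref{P: qi} compared to \cite[Theorem 3.5]{GBF18}, is ensuring that the moves can be made \emph{locally finite} and \emph{proper}: one must perform countably many isotopies on a locally finite collection of simplices, each supported in a compact set, and organized so that only finitely many affect any given compact region of $X$. This is the same technical point addressed in Proposition \ref{P: qi}, and the same remedy applies, since the stratified general position requirements of Definition \ref{D: sgp} are local conditions verifiable stratum by stratum. As noted in \cite[Remark 3.8]{GBF18}, the compact proof is already designed to accommodate this, so no essentially new ideas are needed beyond observing that the arguments restrict to the intersection chain complex without modification.
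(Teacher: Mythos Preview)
Your proposal is correct and follows essentially the same approach as the paper: invoke the compact-chain intersection-chain analogue from \cite{GBF18} and extend to the locally finite setting via the proper isotopy argument (as in Proposition~\ref{P: qi} and \cite[Remark 3.8]{GBF18}). The only cosmetic difference is that the paper cites \cite[Theorem 3.7]{GBF18} directly (the intersection-chain version), whereas you reconstruct it from \cite[Theorem 3.5]{GBF18} together with the observation that the isotopies preserve the perversity conditions.
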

\begin{proof}
As for Proposition \ref{P: qi}, this result follows from the same reasoning as the analogous Theorem 3.7 of \cite{GBF18}.
\end{proof}

\begin{proposition}\label{P: G}
Let $\bar p_1+\bar p_2$ be the perversity that  evaluates on the singular stratum $\mc Z$ to $\bar p_1(\mc Z)+\bar p_2(\mc Z)$.
The intersection product $\mu$ restricts to a map $\mu: G^{\infty,P}_*(X)\to I^{\bar p_1+\bar p_2}C_{*-n}^\infty(X)$. 
\end{proposition}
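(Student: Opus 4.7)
My plan is to verify the two perversity-defining dimension conditions for $\mu(\xi)$ for each $\xi \in G^{\infty,P}_i(X)$. Since $\mu$ is a chain map of degree $-n$ (Proposition \ref{P: chain map}), we have $|\bd\mu(\xi)| \subset |\mu(\bd\xi)|$, and since Definition \ref{D: GP} ensures that $\bd\xi \in G^{\infty,P}_{i-1}(X)$ whenever $\xi \in G^{\infty,P}_i(X)$, the boundary condition for $\mu(\xi) \in I^{\bar p_1+\bar p_2}C^\infty_{i-n}(X)$ reduces to the support condition applied to $\mu(\bd\xi)$. So it is enough to establish: for every $\xi \in G^{\infty,P}_i(X)$ and every singular stratum $\mc Z$,
$$\dim(|\mu(\xi)| \cap \mc Z) \leq (i-n) - \codim(\mc Z) + (\bar p_1+\bar p_2)(\mc Z).$$

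The first key step is the observation that $|\mu(\xi)| \subset \Delta^{-1}(|\epsilon(\xi)|)$. This is immediate from Definition \ref{D: int coeff Z}, which sets $I_\sigma(\xi) = 0$ whenever $\sigma \not\subset |\epsilon(\xi)|'$. Consequently
$$|\mu(\xi)| \cap \mc Z \;\subset\; \Delta^{-1}\bigl(|\epsilon(\xi)| \cap \Delta(\mc Z)\bigr),$$
and the stratified general position hypothesis built into $G^{\infty,P}_*(X)$ yields
$$\dim\bigl(|\epsilon(\xi)| \cap \Delta(\mc Z)\bigr) \;\leq\; \dim\bigl(|\epsilon(\xi)| \cap (\mc Z \times \mc Z)\bigr) - \dim(\mc Z).$$

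Next I would reduce to elementary tensors. Writing $\xi = \sum_j \xi_1^j \otimes \xi_2^j$ with $\xi_1^j \in I^{\bar p_1}C_{a_j}^\infty(X)$ and $\xi_2^j \in I^{\bar p_2}C_{b_j}^\infty(X)$ with $a_j + b_j = i$, the support of a sum is always contained in the union of supports, so
$$|\epsilon(\xi)| \cap (\mc Z \times \mc Z) \;\subset\; \bigcup_j \bigl((|\xi_1^j| \cap \mc Z) \times (|\xi_2^j| \cap \mc Z)\bigr).$$
Applying the perversity dimension bounds to each $\xi_1^j$ and $\xi_2^j$, each summand has dimension at most
$$\bigl(a_j - \codim(\mc Z) + \bar p_1(\mc Z)\bigr) + \bigl(b_j - \codim(\mc Z) + \bar p_2(\mc Z)\bigr) = i - 2\codim(\mc Z) + (\bar p_1+\bar p_2)(\mc Z).$$
Combining with $\dim(\mc Z) = n - \codim(\mc Z)$ produces exactly the target bound.

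The main obstacle to watch for is making sure the elementary-tensor decomposition is harmless despite possible cancellation among the $\xi_1^j \times \xi_2^j$: since supports can only shrink (not grow) upon forming sums, the containment $|\epsilon(\sum_j \xi_1^j \otimes \xi_2^j)| \subset \bigcup_j |\xi_1^j| \times |\xi_2^j|$ is safe and gives the needed dimension bound. Beyond this, the argument is bookkeeping, combining intersection-chain dimension restrictions with the stratified general position built into the definition of $G^{\infty,P}_*(X)$; no additional machinery from the duality sections is required here.
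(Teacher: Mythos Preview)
Your proposal is correct and follows essentially the same approach as the paper's own proof: both decompose $\xi$ into elementary tensors to bound $\dim(|\epsilon(\xi)|\cap(\mc Z\times\mc Z))$ via the individual perversity conditions, then invoke stratified general position to pass to $\dim(|\epsilon(\xi)|\cap\Delta(\mc Z))$, and finally use $|\mu(\xi)|\subset\Delta^{-1}(|\epsilon(\xi)|)$ to obtain the allowability bound, with the boundary condition handled identically by applying the same computation to $\bd\xi$ via the chain-map property of $\mu$.
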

\begin{proof}
We already know that $\mu:G^{\infty,P}_*(X) \to C_{*-n}^{\infty}(X, \Sigma)$ is well defined, so we have only to show that the image lies in $I^{\bar p_1+\bar p_2}C_{*-n}^\infty(X)$. 

As $G^{\infty,P}_i(X)\subset I^{\bar p_1}C_*^\infty(X)\otimes I^{\bar p_2}C_*^\infty(X)$, each element $\xi\in G^{\infty,P}_i(X)$ can be written as a sum $\sum_a \zeta_a\otimes \eta_a$ with $\zeta_a\in I^{\bar p_1}C_*^\infty(X)$ and $\eta_a\in I^{\bar p_1}C_*^\infty(X)$. Suppose $\zeta_a\in I^{\bar p_1}C_j^\infty(X)$ and $\eta_a\in  I^{\bar p_2}C_k^\infty(X)$ with $j+k=i$, and let $\mc Z$ be a stratum of $X$. Then, by definition, $\dim(|\zeta_a|\cap \mc Z)\leq j-\codim_X(\mc Z)+\bar p_1(\mc Z)$ and similarly for $\eta_a$. Thus 
\begin{align*}
\dim(|\epsilon(\zeta_a\otimes \eta_a)|\cap (\mc Z\times \mc Z))&= \dim((|\zeta_a|\times |\eta_a|)\cap (\mc Z\times \mc Z))\\
&= \dim((|\zeta_a|\cap \mc Z)\times (|\eta_a|\cap \mc Z))\\
&= \dim(|\zeta_a|\cap \mc Z)+\dim(|\eta_a|\cap \mc Z)\\
&\leq j-\codim_X(\mc Z)+\bar p_1(\mc Z)+k-\codim_X(\mc Z)+\bar p_2(\mc Z)\\
&=i-2\codim_X(\mc Z)+\bar p_1(\mc Z)+\bar p_2(\mc Z).
\end{align*}
It follows that $\dim(|\epsilon(\xi)|\cap (\mc Z\times \mc Z))\leq i-2\codim_X(\mc Z)+\bar p_1(\mc Z)+\bar p_2(\mc Z)$. Therefore, as $|\epsilon(\xi)|$ is in stratified general position by assumption, 
\begin{align*}
\dim(|\epsilon(\xi)|\cap \Delta(\mc Z))&\leq i-2\codim_X(\mc Z)+\bar p_1(\mc Z)+\bar p_2(\mc Z)-\dim(\mc Z)\\
&=i-2\codim_X(\mc Z)+\bar p_1(\mc Z)+\bar p_2(\mc Z)-(n-\codim_X(\mc Z))\\
&=i-n-\codim_X(\mc Z)+\bar p_1(\mc Z)+\bar p_2(\mc Z).
\end{align*}
Therefore, as $\mu(\xi)$ is supported in $\Delta^{-1}(|\epsilon(\xi)|)=|\epsilon(\xi)|\cap \Delta(X)$, we see that $\mu(\xi)$ satisfies the conditions to be $\bar p_1+\bar p_2$ allowable. An equivalent computation shows that  $\bd\mu(\xi)=\mu(\bd\xi)$ is also $\bar p_1+\bar p_2$ allowable, so $\mu(\xi)\in I^{\bar p_1+\bar p_2}C_{*-n}^\infty(X)$.
\end{proof}

\begin{remark}
Notice that the full power in Proposition \ref{P: G} of the stratified general position assumption is in making sure that $\mu(\xi)$ lies in the intersection chain complex with the smallest possible perversity, $\bar p_1+\bar p_2$. Consequently, $\mu(\xi)$ also lies in $I^{\bar r}C_{*-n}^\infty(X)$ for all perversities $\bar r$ with $\bar p_1+\bar p_2\leq r$. In fact, it follows directly from the definitions that $I^{\bar p}C_{*}^\infty(X)\subset I^{\bar q}C_{*}^\infty(X)$ if $\bar p\leq \bar q$, meaning that $\bar p(\mc Z)\leq \bar q(\mc Z)$ for all singular strata $\mc Z$. If our goal is simply to have $\mu(\xi)\in  I^{\bar r}C_{*-n}^\infty(X)$ for some $\bar r\geq \bar p_1+\bar p_2$, we could relax the stratified general position requirement accordingly on the singular strata. In fact, this is the case in the original definition of the intersection product for intersection chains in \cite[Section 2.1]{GM1}, where, in addition to general position on the regular strata, it is only required for the intersection of a pair of chains to satisfy the dimension requirements recalled in  Section \ref{S: IC} with respect to $\bar r$ (see \cite{GM1} for full details). 
\end{remark}

\begin{proposition}\label{P: ires}
If $U\subset X$ is an open subset then there is a  commutative diagram
\begin{diagram}
I^{\bar p_1}C_*^\infty(X)\otimes I^{\bar p_2}C_*^\infty(X)&\lInto& G^{\infty,P}_*(X)&\rTo^{\mu}& I^{\bar p_1+\bar p_2}C_{*-n}^\infty(X)\\
\dTo^r&&\dTo^r&&\dTo^r\\
 I^{\bar p_1}C_*^\infty(U)\otimes I^{\bar p_2}C_*^\infty(U)&\lInto&G^{\infty,P}_*(U)&\rTo^{\mu}& I^{\bar p_1+\bar p_2}C_{*-n}^\infty(U),
\end{diagram}
where the vertical maps are induced by restriction.

\end{proposition}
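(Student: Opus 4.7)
The plan is to reduce the proposition to Proposition \ref{P: res} by showing that the restriction map $r$ carries each of the three complexes appearing in the top row into its counterpart in the bottom row. Once these three restrictions are established, the commutativity of both squares follows immediately: the complexes $I^{\bar p_i}C_*^\infty(X)$ and $G^{\infty,P}_*(X)$ are subcomplexes of $C_*^\infty(X,\Sigma_X)$ and $\mf G^\infty_*(X,\Sigma_X)$ respectively (the latter by Definition \ref{D: GP}), and the map $\mu$ is defined by the same formula in both settings, so the diagram of Proposition \ref{P: res} restricts to the diagram at hand.

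For the outer complexes, the argument is essentially local. Intersection chains are defined by the conditions $\dim(|\xi| \cap \mc Z) \leq i - \codim(\mc Z) + \bar p(\mc Z)$ and the analogous condition on $|\bd \xi|$ for each singular stratum $\mc Z$. The strata of $U$ have the form $\mc Z \cap U$ with $\codim_U(\mc Z \cap U) = \codim_X(\mc Z)$, and the perversity on $U$ is obtained by restricting $\bar p$. Since $|r(\xi)| = |\xi| \cap U$ and $|r(\bd \xi)| = |\bd \xi| \cap U$, intersecting the support with a stratum of $U$ only shrinks it, so the dimension bounds are preserved. Hence $r$ maps $I^{\bar p}C_*^\infty(X)$ into $I^{\bar p}C_*^\infty(U)$ for any perversity $\bar p$, and in particular for $\bar p_1$, $\bar p_2$, and $\bar p_1 + \bar p_2$.

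For the middle column, I need that restriction preserves stratified general position. By Lemma \ref{L: restrict cross}, $|\epsilon(r(\xi))| = |\epsilon(\xi)| \cap (U \times U)$, and similarly for $\bd \xi$, so it suffices to show that if $A \subseteq X \times X$ is in stratified general position then so is $A \cap (U \times U)$ in $U \times U$ with respect to the restricted stratification. Unpacking Definition \ref{D: sgp}, this amounts to saying that the general-position inequality between $A \cap (\mc Z \times \mc Z)$ and $\Delta(\mc Z)$ in the manifold $\mc Z \times \mc Z$ descends to the open submanifold $(\mc Z \cap U) \times (\mc Z \cap U)$, which is the standard fact that general position of PL subspaces is a local condition.

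The main, though mild, obstacle is this last step: a naive bound might worry that $\dim(A \cap ((\mc Z \cap U) \times (\mc Z \cap U)))$ could drop more than $\dim(A \cap \Delta(\mc Z \cap U))$ does upon passing from $X$ to $U$, breaking the numerical inequality. The resolution is that general position is a pointwise statement about local dimensions: at each point of $A \cap \Delta(\mc Z)$ retained in $U \times U$, the local dimensions on both sides of the inequality are unchanged by the restriction, while the removed points simply drop out. With the three restriction properties in hand, Proposition \ref{P: G} applied over $U$ guarantees that $\mu$ sends $G^{\infty,P}_*(U)$ into $I^{\bar p_1+\bar p_2}C_{*-n}^\infty(U)$, and the full diagram commutes by Proposition \ref{P: res}.
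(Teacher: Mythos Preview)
Your approach is essentially the same as the paper's: reduce to Proposition~\ref{P: res} and add the observation that restriction preserves intersection chains. The paper's proof is a single sentence citing Lemmas~\ref{L: restrict umkehr} and~\ref{L: restrict cross}, Proposition~\ref{P: res}, and the fact that $r$ carries $I^{\bar p}C_*^\infty(X)$ into $I^{\bar p}C_*^\infty(U)$; it does not separately address the middle column at all.

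You are more thorough than the paper in explicitly treating the preservation of stratified general position under restriction, and you correctly identify the potential numerical worry. One caution: your resolution via ``local dimensions'' is a bit informal. Definition~\ref{D: sgp} is stated with global dimensions, and in general the global inequality $\dim(P\cap Q)\le \dim P+\dim Q-m$ for PL subspaces in an $m$-manifold is \emph{not} automatically inherited by open submanifolds (one can cook up a $P$ with a high-dimensional piece far away inflating $\dim P$ while $P\cap Q$ sits entirely in $U$). So the step deserves either a careful local-dimension reformulation or the observation that for the particular $A=|\epsilon(\xi)|$ arising here the issue does not occur. That said, the paper simply takes this for granted, so your treatment is at least as complete as the published proof.
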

\begin{proof}
This follows from Lemmas \ref{L: restrict umkehr} and \ref{L: restrict cross} as in the proof of Proposition \ref{P: res}, together with the observation that the restriction maps
$r:C_{*}^\infty(X,\Sigma_X) \to C^\infty_{*}(U,\Sigma_U)$ take the subcomplex  $I^{\bar p}C_*^\infty(X)$ to $I^{\bar p}C_*^\infty(U)$ for any perversity $\bar p$. 
\end{proof}

Next, let us verify that our intersection chain product agrees with the Goresky-MacPherson product on compact spaces, which we denote by $\pf$. 
For this, we need the notion of stratified general position for two chains.

\begin{definition}\label{D: genpos}
We say that two chains $\xi,\eta$ in $X$ are in stratified general position if $\epsilon(\xi\otimes\eta)$ is in stratified general position, i.e.\ if $|\xi|\times |\eta|$ is in stratified general position with respect to $\Delta$. As $(|\xi|\times |\eta|)\cap \Delta(\mc Z)=|\xi|\cap |\eta|\cap \mc Z$ and $\dim(B\times C)=\dim(B)+\dim(C)$ in general (if neither $B$ nor $C$ is empty), this is equivalent to requiring that $|\xi|\cap \mc Z$ and $|\eta|\cap \mc Z$ are in general position in $\mc Z$, i.e.\ $\dim(|\xi|\cap |\eta|\cap \mc Z)\leq \dim(|\xi|\cap \mc Z)+\dim(|\eta|\cap \mc Z)-\dim(\mc Z)$.
\end{definition}

\begin{proposition}\label{P: compact}
Suppose $X$ is a compact oriented PL stratified pseudomanifold and that $\bar p_1,\bar p_2$ are Goresky-MacPherson perversities such that there exists a Goresky-MacPherson perversity $\bar r$ with $\bar p_1+\bar p_2\leq \bar r$.  
Then the composition $\mu:G^P_*(X)\xr{\mu}I^{\bar p_1+\bar p_2}C_{*-n}(X)\to I^{\bar r}C_{*-n}(X)$ agrees with the intersection product $\mu_2$ defined in \cite{GBF18}, replacing the Goresky-MacPherson duality map with the one constructed here. Therefore, if $\xi\in I^{\bar p_1}C_i(X)$ and $\eta\in I^{\bar p_2}C_j(X)$ are such that the pairs\footnote{We do not need to require that the pair $(\bd \xi,\bd\eta)$ be in stratified position, as we do not here run into the logical difficulty mentioned in Remark \ref{R: GM bdbd}. This is because $(\xi,\bd \eta)$ and $(\bd \xi,\eta)$ being in stratified general position is enough to guarantee that 
$\bd (\xi\times \eta)=(\bd \xi)\times \eta\pm \xi\times \bd \eta$ is in stratified general position with respect to $\Delta$, and we never need to consider directly terms such as $(\bd \xi)\pf \eta$ or $\xi\pf\bd \eta$.} 
$(\xi,\eta)$, $(\xi,\bd \eta)$, and $(\bd \xi,\eta)$ are in stratified general position, then $\mu(\xi\otimes \eta)$ is equal to the Goresky-MacPherson intersection product  $\xi\pf \eta$, up to sign.
\end{proposition}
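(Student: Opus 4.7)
The plan is to show the compactness of $X$ reduces our a priori local construction of $\mu$ to a single global construction, and then invoke the comparison already sketched in Remark \ref{R: McC comp} to identify $\mu$ with the map $\mu_2$ of \cite{GBF18}. Since the Goresky-MacPherson intersection product $\pf$ is known from \cite[Section 4.3]{GBF18} to be induced (up to sign) by $\mu_2$, this will yield the claimed agreement.

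The first step is to observe that, because $X$ is compact, we may take $Z=X$ in the local umkehr map of Definition \ref{D: int coeff Z} for every simplex $\sigma$ simultaneously; this is legitimate by the $Z$-independence result Proposition \ref{P: indep Z}. With $Z=X$, the complement pieces vanish: $D_Z=\emptyset$, $S_Z=\emptyset$, whence $J_Z=\Sigma$ and $J_{Z\times Z}=(X\times\Sigma)\cup(\Sigma\times X)$. The composite defining $I_\sigma(\xi)$ therefore simplifies to a \emph{single global} composition
\[
H_i^\infty(|\xi|,|\bd\xi|)\to H_i(X\times X, J_{Z\times Z})\xleftarrow{\D} H^{2n-i}(\cdot)\xr{\Delta^*} H^{2n-i}(\cdot)\xr{\D} H_{i-n}(X,\Sigma),
\]
followed by the excision and the orientation identification reading off the coefficient of $\sigma$. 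By Remark \ref{R: patch} and Lemma \ref{L: useful coeff big}, this globally assembles into a single PL chain $\Delta_!(\xi)\in C^\infty_{i-n}(X,\Sigma)$ that is the image of $[\xi]$ under the global composition above, with no ``patching'' needed.

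Next I would compare this global composition with the corresponding umkehr map used to define $\mu_2$ in \cite{GBF18}. The definitions have identical structure: cross product, then inverse of a duality isomorphism on $(X\times X,\Sigma_{X\times X})$ relative to $|\xi\otimes\eta|$ and $|\bd(\xi\otimes\eta)|$, then the $\Delta$ pullback, then a duality isomorphism on $(X,\Sigma)$, all composed to produce a chain in $C^\infty_{*-n}(X,\Sigma)$. The only real discrepancy is that \cite{GBF18} invokes the Goresky-MacPherson version of the duality map, whereas we use $\D$ from Proposition \ref{P: GM dual}. Here I would invoke the machinery of Section \ref{S: dualities}: our $\D$ corrects and slightly reformulates the Goresky-MacPherson construction but is built from the same basic ingredients (excision and cap product with the fundamental class, via Dold duality). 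Using Lemmas \ref{L: natural dual}, \ref{L: sing indep}, and \ref{L: expansion}, together with the sign adjustment in Definition \ref{D: Dold duality}, one can verify that the two duality maps agree up to an overall sign depending only on the degrees. After this identification, $\mu$ and $\mu_2$ agree as maps out of $G^P_*(X)$, modulo the sign.

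Finally, for the second conclusion: given $\xi,\eta$ with $(\xi,\eta)$, $(\xi,\bd\eta)$, $(\bd\xi,\eta)$ in stratified general position, the cross product $\xi\times\eta$ and its boundary lie in stratified general position with respect to $\Delta$, so $\xi\otimes\eta\in G^{\infty,P}_*(X)$ and $\mu(\xi\otimes\eta)$ is defined. The Goresky-MacPherson product $\xi\pf\eta$ is defined under these same hypotheses, and by \cite[Section 4.3]{GBF18} it equals $\mu_2(\xi\otimes\eta)$ up to sign. Combining with the identification $\mu=\mu_2$ (up to sign) from the previous step completes the argument. The main obstacle I expect is bookkeeping: tracing the precise sign conventions through two layers of comparison (our $\D$ versus the Goresky-MacPherson duality map, and $\mu_2$ versus $\pf$) and confirming that no unintended general-position assumptions creep in, particularly because our weaker hypothesis in Definition \ref{D: Delta} already suffices for $\Delta_!$ to be defined whereas \cite{GM1, GBF18} impose slightly different stratumwise conditions.
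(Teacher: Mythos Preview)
Your proposal is essentially correct and follows the same strategy as the paper: take $Z=X$ by compactness, observe that $D_Z=S_Z=\emptyset$ so $J_Z=\Sigma$ and $J_{Z\times Z}=\Sigma_{X\times X}$, recognize the resulting single global composition as the $\mu_2$ of \cite{GBF18}, and then invoke \cite[Proposition 4.9]{GBF18} for the comparison with $\pf$.

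Two points worth tightening. First, your displayed global composition is too coarse: the first map should land in $H_i(|\xi|\cup\Sigma_{X\times X},|\bd\xi|\cup\Sigma_{X\times X})$ rather than $H_i(X\times X,J_{Z\times Z})$, and the final $\D$ lands in $H_{i-n}(|\xi|'\cup\Sigma_X,|\bd\xi|'\cup\Sigma_X)$, not $H_{i-n}(X,\Sigma)$. The supports must be retained for $\D$ to be the isomorphism of Proposition \ref{P: GM dual}. Second, and more substantively, you propose to use Lemmas \ref{L: natural dual}, \ref{L: sing indep}, \ref{L: expansion} to show that the original Goresky-MacPherson duality map and the corrected $\D$ agree up to sign. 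This is unnecessary and in general not true---the entire point of Section \ref{S: GM} is that the original map has an error. Reread the proposition statement: it asserts agreement with $\mu_2$ \emph{after replacing} the Goresky-MacPherson duality map by $\D$. So there is nothing to compare; one simply substitutes $\D$ into the definition of $\mu_2$ from \cite[Section 4.2]{GBF18} and observes that the resulting formula is exactly the global $\Delta_!$ you wrote down.
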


\begin{remark}
Goresky-MacPherson perversitities are the perversities that satisfy the original definition of a perversity in \cite{GM1}; see also \cite[Definition 3.1.4]{GBF35}.
The restriction on perversities in the statement of the proposition is necessary because the intersection products in \cite{GM1,GBF18} are defined only for Goresky-MacPherson perversities. However, the definitions of those products work just as well so long as all perversities satisfy $\bar p(\mc Z)\leq \codim(\mc Z)-2$ for all singular strata $\mc Z$. To use perversities that allow for $\bar p(\mc Z)>\codim(\mc Z)-2$ for some strata it is necessary to utilize the relative chain complexes as we have done here. For an explanation of the different behaviors resulting from these assumptions, see \cite[Chapter 6]{GBF35}.
\end{remark}

\begin{proof}[Proof of Proposition \ref{P: compact}]
Given that $X$ is compact, the entire construction of $\Delta_!$ in Sections \ref{S: local umkehr} and \ref{S: umkehr} can be carried out with a single neighborhood space $Z$, namely $X$ itself. In this case, $D_Z=D_{Z\times Z}=S_Z=\emptyset$, so $J_Z=\Sigma_Z$ and $J_{Z\times Z}=(X\times \Sigma_X)\cup(\Sigma_X\times X)=\Sigma_{X\times X}$. So then if $\xi\in C_*^{\Delta}((X,\Sigma)\times (X,\Sigma))$, by following Definition \ref{D: int coeff Z} in this case, the chain $\Delta_!(\xi)$  corresponds under the isomorphism of Lemma \ref{L: useful coeff big} to  the image of $[\xi]$ under the composition 
\begin{align*}
H_i(|\xi|,|\bd \xi|)
&\to H_i(|\xi|\cup \Sigma_{X\times X},|\bd \xi|\cup \Sigma_{X\times X})\\
&\xleftarrow{\D} H^{2n-i}(X\times X-(|\bd \xi|\cup \Sigma_{X\times X}),X\times X-(|\xi|\cup \Sigma_{X\times X}))\\
&\xr{\Delta^*}  H^{2n-i}(X-|\bd \xi|'\cup \Sigma_{X},X-(|\xi|'\cup  \Sigma_{X}))\\
&\xr{\D} H_{i-n}(|\xi|'\cup \Sigma_X,|\bd \xi|'\cup \Sigma_X).
\end{align*}
But, up to indexing shifts and the new corrected version of the map $\D$, this is precisely the map defined in Section 4.2 of \cite{GBF18}. The cross product part of the intersection product remains identical.

The comparison with the Goresky-MacPherson intersection product is then outlined\footnote{That argument notes $(\alpha\times \beta)\frown (\xi\times \eta)=(-1)^{|\beta||\xi|}(\alpha\times \beta)\frown (\xi\times \eta)$, as provided by \cite[\S VII.12.17]{Dold}, but does not trace through the full compatibility between cross products and the Dold and Goresky-MacPherson duality maps. However, that is a routine, though tedious, exercise in applying naturality of cross products, as well as the formula just quoted, to the defining diagrams of the duality isomorphisms. }
 in the proof of Proposition 4.9 of \cite{GBF18}.  
\end{proof}

Definition \ref{D: GP} and Propositions \ref{P: iqi}, \ref{P: G}, \ref{P: ires}, and \ref{P: compact} complete the proof of Theorem \ref{T: main}.\qedhere
\qedsymbol

\section{Duality of intersection products and cup products}\label{S: cup dual}

In this section, we utilize some of the tools we have developed to verify a seemingly well-known fact: On a compact oriented PL manifold, the Goresky-MacPherson homology intersection product and the cup product are Poincar\'e dual. We also show that the Goresky-MacPherson homology product agrees with Dold's \cite[Section VIII.13]{Dold}.
 We then discuss how our argument fails to generalize to compact oriented PL stratified pseudomanifolds, demonstrating the need for the ``sheafifiable'' intersection product we have developed in this paper. 

We begin with the following result about PL manifolds; it is a nice corollary of Lemma \ref{L: natural dual}, which demonstrated the naturality of the duality map $\D$. We will use the following notation: For a subset $A\subset M$, let $\mf i_A: H_i( A)\to H_i(M)$ and $\mf i^A: H^i(M, M-A) \to H^i(M)$ be induced by inclusion.

\begin{corollary}\label{C: duals}
Let $M$ be a compact oriented PL $n$-manifold, and let $\xi$ be a PL $i$-cycle in $M$.  The following diagram commutes:

\begin{diagram}
H^{n-i}(M,M-|\xi|)&\rTo^{\mf i^{|\xi|}} &H^{n-i}(M)\\
\dTo^\D&&\dTo^\D\\
H_i(|\xi|)&\rTo^{\mf i_{|\xi|}}&H_i(M).
\end{diagram}
In other words, $\mf i_{|\xi|}\D=\D\mf i^{|\xi|}$.
\end{corollary}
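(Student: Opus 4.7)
The statement is essentially a direct specialization of the naturality Lemma \ref{L: natural dual}, with the ``singular set'' $S$ of Proposition \ref{P: GM dual} chosen to be empty (since $M$ is a manifold). The plan is to identify each of the vertical maps $\mathfrak D$ in the diagram as an instance of the duality isomorphism of Proposition \ref{P: GM dual}, and then observe that the horizontal inclusion-induced maps exhibit the left-hand instance as a restriction of the right-hand instance to a smaller compact PL pair.

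More precisely, first I would take $S = \emptyset$ throughout (legitimate because $M-\emptyset = M$ is an oriented $n$-manifold), so that the containment $S \subset L$ required by Proposition \ref{P: GM dual} holds trivially. For the right-hand vertical arrow, I take $(K',L') = (M,\emptyset)$; then $\mathfrak D$ has the form
\[
H^{n-i}(M-\emptyset,M-M) = H^{n-i}(M) \;\longrightarrow\; H_{i}(M,\emptyset) = H_i(M),
\]
which is the duality map on the right of the diagram. For the left-hand vertical arrow, I take $(K,L) = (|\xi|,\emptyset)$; here compactness of $|\xi|$ follows from compactness of $M$ and the fact that $\xi \in C_i(M)$, and $|\xi|$ is a PL subspace by the standard structure of PL chains. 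Then $\mathfrak D$ has the form
\[
H^{n-i}(M-\emptyset,M-|\xi|) = H^{n-i}(M,M-|\xi|) \;\longrightarrow\; H_i(|\xi|,\emptyset) = H_i(|\xi|),
\]
which is the duality map on the left of the diagram.

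Next, I would verify that $(K,L) = (|\xi|,\emptyset) \subset (M,\emptyset) = (K',L')$ in the sense required by Lemma \ref{L: natural dual}, which is immediate. Under this inclusion, the upper horizontal map of Lemma \ref{L: natural dual} is $H^{n-i}(M,M-|\xi|) \to H^{n-i}(M,M-M) = H^{n-i}(M)$, induced by the inclusion of pairs $(M,\emptyset) \hookrightarrow (M,M-|\xi|)$; this is exactly $\mathfrak i^{|\xi|}$. The lower horizontal map is $H_i(|\xi|) \to H_i(M)$ induced by the inclusion $|\xi| \hookrightarrow M$, which is exactly $\mathfrak i_{|\xi|}$. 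Lemma \ref{L: natural dual} then gives $\mathfrak i_{|\xi|}\mathfrak D = \mathfrak D \mathfrak i^{|\xi|}$, as desired.

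There is essentially no genuine obstacle here, since the corollary is a packaging statement: the only things to check are that the hypotheses of Proposition \ref{P: GM dual} (a compact PL pair $(M,\emptyset)$ with oriented manifold complement, and the compact PL subspaces $\emptyset \subset |\xi| \subset M$) and of Lemma \ref{L: natural dual} (the containment of pairs) are satisfied, and that the inclusion-induced maps of the lemma match $\mathfrak i^{|\xi|}$ and $\mathfrak i_{|\xi|}$ on the nose. The hypothesis that $\xi$ is a cycle is not used in the proof itself; it merely ensures that classes in $H^{n-i}(M,M-|\xi|)$ pair sensibly with $\xi$ in intended downstream applications.
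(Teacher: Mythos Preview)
Your proposal is correct and follows exactly the paper's approach: the paper's entire proof is the single line ``In Lemma \ref{L: natural dual}, take $X=K'=M$, $K=|\xi|$, and $L=L'=\emptyset$,'' which is precisely the specialization you carry out, just with more of the routine verifications written out explicitly. Your observation that the cycle hypothesis on $\xi$ is not actually used in the proof is also correct.
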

\begin{proof}
In Lemma \ref{L: natural dual}, take $X=K'=M$, $K=|\xi|$, and $L=L'=\emptyset$. 
\end{proof}

Tracing through the definitions, the vertical map on the right of the diagram in Corollary \ref{C: duals} is just the (signed) cap product with the fundamental class. The corollary thus demonstrates a compatibility between classical Poincar\'e duality and the dualities we've been using to explore chains: If $\xi$ is an $i$-cycle in $M$, it can be represented by a class  $[\xi]\in H_i(|\xi|)$ by Lemma \ref{L: useful}, and this maps by inclusion to the class represented by $\xi$ in $H_i(M)$. The Poincar\'e dual in $H^{n-i}(M)$ of this class in $H_i(M)$ is then the image of $\D^{-1}([\xi])\in H^{n-i}(M,M-|\xi|)$ under the inclusion-induced $H^{n-i}(M,M-|\xi|)\to H^{n-i}(M)$. 

Beyond being a pleasant verification of consistency, this observation can be used fairly readily to see that the homology product induced by the Goresky-MacPherson intersection product of chains really is Poincar\'e dual to the usual cup product on compact manifolds. In fact, suppose $\xi\in C_i(M)$ and $\eta\in C_j(M)$ are two \emph{cycles} in  general position in the compact oriented PL $n$-manifold $M$. In this case, ``stratified'' general position reduces to the single general position requirement that $\dim(|\xi|\cap |\eta|)\leq i+j-n$. If $[\xi]\in H_i(|\xi|)$ and $[\eta]\in H_j([\eta])$ are the homology classes corresponding to the chains by Lemma \ref{L: useful}, then the definition of the Goresky-MacPherson intersection product in \cite[Section 2.1]{GM1}  reduces in this case to the chain corresponding to the image of $[\xi]\otimes [\eta]$ under the composition of maps
\begin{align}
H_i(|\xi|)\otimes H_j(|\eta|)&\xr{(\D\otimes \D)^{-1}} H^{n-i}(M, M-|\xi|)\otimes H^{n-j}(M, M-|\eta|)\notag\\
&\xr{\smile}H^{2n-i-j}(M,M-(|\xi|\cap|\eta|))\label{E: manifold pf}\\
&\xr{\D} H_{i+j-n}(|\xi|\cap|\eta|),\notag
\end{align}
though replacing the Goresky-MacPherson duality map with our corrected map $\D$. By Proposition \ref{P: compact}, the product in this case agrees with that previously studied in \cite{GBF18}.

We denote this intersection product by $\xi\pf \eta$.  It is shown in \cite[Theorem 1]{GM1} that this intersection product  leads to a well-defined product on homology $H_i(M)\otimes H_j(M)\to H_{i+j-n}(M)$, using that any two PL cycles are homologous to a pair of cycles in general position and that the homology class of the image depends only on the homology classes of the inputs. In fact, more generally, such products are defined for intersection homology groups of appropriate perversities on PL stratified pseudomanifolds.

We show that the homology intersection product on manifolds is dual to the cup product. By the Goresky-MacPherson result just cited, it is sufficient to verify this utilizing two representative cycles in general position.

\begin{theorem}\label{T: duals}
Let $M$ be a compact oriented PL $n$-manifold. Let $\xi\in C_i(M)$ and $\eta\in C_j(M)$ be PL cycles in general position and represented by classes $[\xi]\in H_i(|\xi|)$ and $[\eta]\in H_j([\eta])$. Then if $\pf$ is the Goresky-MacPherson intersection product we have a commutative diagram
\begin{diagram}[LaTeXeqno]\label{D: dual diagram}
H_i(|\xi|)\otimes H_j(|\eta|)&\rTo^{\mf i_{|\xi|}\otimes \mf i_{|\eta|}}&H_i(M)\otimes H_j(M)\\
&&\dTo^{(\D\otimes \D)^{-1}}\\
&&H^{n-i}(M)\otimes H^{n-j}(M)\\
\dTo^{\pf}&&\dTo^\smile\\
&&H^{2n-i-j}(M)\\
&&\dTo^\D\\
H_{i+j-n}(|\xi|\cap|\eta|)&\rTo^{\mf i_{|\xi|\cap|\eta|}}&H_{i+j-n}(M).
\end{diagram}
\end{theorem}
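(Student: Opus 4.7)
The plan is to unpack the definition of the Goresky-MacPherson product given in equation \eqref{E: manifold pf} and then insert Corollary \ref{C: duals} twice, once on each side, with the naturality of the cup product in the middle. Explicitly, the left-hand column of diagram \eqref{D: dual diagram} is by definition the composition
\[
H_i(|\xi|)\otimes H_j(|\eta|)\xr{(\D\otimes \D)^{-1}} H^{n-i}(M,M-|\xi|)\otimes H^{n-j}(M,M-|\eta|)\xr{\smile} H^{2n-i-j}(M,M-(|\xi|\cap |\eta|))\xr{\D}H_{i+j-n}(|\xi|\cap |\eta|),
\]
so proving commutativity reduces to assembling a larger diagram relating this composition, followed by $\mf i_{|\xi|\cap|\eta|}$, to the right-hand column of \eqref{D: dual diagram}.

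First I would apply Corollary \ref{C: duals} in the form $\mf i^{|\xi|}\circ \D^{-1}=\D^{-1}\circ \mf i_{|\xi|}$ and similarly for $\eta$. This identifies the top-right composition $(\D\otimes \D)^{-1}(\mf i_{|\xi|}\otimes \mf i_{|\eta|})$ with $(\mf i^{|\xi|}\otimes \mf i^{|\eta|})(\D\otimes \D)^{-1}$, so the two routes agree up to comparing the \textit{absolute} cup product of the images in $H^*(M)$ with the \textit{relative} cup product landing in $H^{2n-i-j}(M,M-(|\xi|\cap |\eta|))$. These are connected by naturality of the cup product, which gives the commutative square
\begin{diagram}
H^{n-i}(M,M-|\xi|)\otimes H^{n-j}(M,M-|\eta|)&\rTo^{\smile}&H^{2n-i-j}(M,M-(|\xi|\cap|\eta|))\\
\dTo^{\mf i^{|\xi|}\otimes \mf i^{|\eta|}}&&\dTo^{\mf i^{|\xi|\cap |\eta|}}\\
H^{n-i}(M)\otimes H^{n-j}(M)&\rTo^{\smile}&H^{2n-i-j}(M).
\end{diagram}
Here one uses that the natural map $(M,\emptyset)\to (M,(M-|\xi|)\cup (M-|\eta|))$ and the identification $(M-|\xi|)\cup(M-|\eta|)=M-(|\xi|\cap|\eta|)$ let us pass from the relative cup product \cite[\S VII.7]{Dold} to the absolute one.

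Finally, I would apply Corollary \ref{C: duals} a second time, now with the set $|\xi|\cap|\eta|$ in place of $|\xi|$, to obtain $\mf i_{|\xi|\cap|\eta|}\circ \D=\D\circ \mf i^{|\xi|\cap|\eta|}$. Stacking these three commutative pieces vertically produces a commutative diagram whose left column is $\mf i_{|\xi|\cap|\eta|}\circ \pf$ applied to $[\xi]\otimes[\eta]$ and whose right column is exactly $\D\circ \smile\circ (\D\otimes \D)^{-1}\circ (\mf i_{|\xi|}\otimes \mf i_{|\eta|})$, yielding \eqref{D: dual diagram}. The main obstacle I anticipate is verifying the cup-product naturality square cleanly, since it requires being careful that $\D^{-1}[\xi]$ genuinely lives in $H^{n-i}(M,M-|\xi|)$ (which is ensured by Lemma \ref{L: useful} together with the construction of $\D$) and that the two sides of that square use compatible relative cup product formulations; once that bookkeeping is in place, the remainder is formal from the naturality statements already established in Section \ref{S: dualities}.
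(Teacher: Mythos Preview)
Your proposal is correct and follows essentially the same route as the paper: unpack $\pf$ via \eqref{E: manifold pf}, apply Corollary \ref{C: duals} for $|\xi|$ and $|\eta|$ on one side, use naturality of the cup product with respect to $(M;\emptyset,\emptyset)\to(M;M-|\xi|,M-|\eta|)$, and apply Corollary \ref{C: duals} for $|\xi|\cap|\eta|$ on the other side. The paper carries out exactly these three moves in a single chain of equalities.

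One cosmetic difference worth noting: the paper decomposes $(\D\otimes\D)^{-1}$ as $(-1)^n(\D^{-1}\otimes\D^{-1})$ and then tracks the Koszul signs explicitly (picking up $(-1)^{ni}$ from evaluating $\D^{-1}\otimes\D^{-1}$ on $[\xi]\otimes[\eta]$ and then cancelling everything at the end). Your approach of keeping $(\D\otimes\D)^{-1}$ as a single map and observing that it commutes with $\mf i_{|\xi|}\otimes\mf i_{|\eta|}$ (since the $\mf i$ maps have degree $0$) is slightly cleaner and avoids that bookkeeping entirely; you should just say a word to justify that no sign appears there.
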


\begin{corollary}\label{C: the point}
Let $M$ be a compact oriented PL manifold of dimension $n$ and $\pf$ the Goresky-MacPherson intersection product. The following diagram commutes:
\begin{diagram}
H^{n-i}(M)\otimes H^{n-j}(M)&\rTo^\smile&H^{2n-i-j}(M)\\
\dTo^{\D\otimes \D}&&\dTo^{\D}\\
H_i(M)\otimes H_{j}(M)&\rTo^\pf&H_{i+j-n}(M).
\end{diagram}
\end{corollary}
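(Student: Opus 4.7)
The plan is to deduce Corollary \ref{C: the point} as a direct consequence of Theorem \ref{T: duals}, together with the defining property of the Goresky-MacPherson homology intersection product and the fact that $\D$ is an isomorphism when applied with $K = M$ and $L = \emptyset$. In brief, the outer rectangle of diagram \eqref{D: dual diagram} already contains the desired commutativity; the only work is to lift arbitrary cohomology classes on $M$ to classes supported on the carriers of generic cycle representatives so that Theorem \ref{T: duals} applies.

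First I would fix arbitrary classes $[\alpha] \in H^{n-i}(M)$ and $[\beta] \in H^{n-j}(M)$, and set $a = \D[\alpha] \in H_i(M)$ and $b = \D[\beta] \in H_j(M)$. Every class in $H_i(M)$ arises this way, as $\D \colon H^{n-i}(M) \to H_i(M)$ is the classical Poincar\'e duality isomorphism (this is Proposition \ref{P: GM dual} applied with $K=M$, $L=\emptyset$, $S=\emptyset$). By the general position results of Goresky-MacPherson \cite[Section 2]{GM1}, the classes $a$ and $b$ admit PL cycle representatives $\xi \in C_i(M)$ and $\eta \in C_j(M)$ whose supports are in general position in $M$. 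Via Lemma \ref{L: useful} these determine classes $[\xi] \in H_i(|\xi|)$ and $[\eta] \in H_j(|\eta|)$ with $\mf i_{|\xi|}[\xi] = a$ and $\mf i_{|\eta|}[\eta] = b$.

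Next I would chase $[\xi] \otimes [\eta]$ around the outer rectangle of \eqref{D: dual diagram}. Going right and then down, the element $\mf i_{|\xi|}[\xi] \otimes \mf i_{|\eta|}[\eta] = a \otimes b$ is carried to $[\alpha]\otimes[\beta]$ by $(\D\otimes\D)^{-1}$, then to $[\alpha]\smile[\beta]$, and finally to $\D([\alpha]\smile[\beta])$. Going down the left and then across, $[\xi]\otimes[\eta]$ is sent to $\mf i_{|\xi|\cap|\eta|}([\xi]\pf[\eta])$. Theorem \ref{T: duals} therefore gives
\begin{equation*}
\mf i_{|\xi|\cap|\eta|}\bigl([\xi]\pf[\eta]\bigr) \;=\; \D\bigl([\alpha]\smile[\beta]\bigr).
\end{equation*}

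To conclude, I would invoke the definition of the Goresky-MacPherson homology intersection product on $H_*(M)$: for classes $a,b$ represented by cycles $\xi,\eta$ in general position, one sets $a \pf b := \mf i_{|\xi|\cap|\eta|}([\xi]\pf[\eta])$, and \cite[Theorem 1]{GM1} ensures that this is independent of the choice of representatives. Hence $\D[\alpha] \pf \D[\beta] = \D([\alpha]\smile[\beta])$, which is precisely the commutativity of the corollary's square. The main obstacle, namely identifying the cup product with a chain-level intersection through the duality map $\D$, is entirely absorbed into Theorem \ref{T: duals}; the remaining argument is just a bookkeeping step to translate from representative cycles to arbitrary cohomology classes.
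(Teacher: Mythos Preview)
Your proposal is correct and follows exactly the approach the paper intends: the corollary is stated without proof immediately after Theorem \ref{T: duals}, and the argument you give---choosing general-position cycle representatives, applying Theorem \ref{T: duals}, and invoking \cite[Theorem 1]{GM1} for well-definedness of the homology product---is precisely the implicit deduction. One small caution: your intermediate claim that $(\D\otimes\D)^{-1}$ carries $a\otimes b$ to $[\alpha]\otimes[\beta]$ elides the Koszul sign (the paper's own proof of Theorem \ref{T: duals} shows $(\D\otimes\D)^{-1}=(-1)^n(\D^{-1}\otimes\D^{-1})$), but this is harmless because the identity you actually need is $\pf\circ(\D\otimes\D)=\D\circ\smile$, which follows from $\pf=\D\circ\smile\circ(\D\otimes\D)^{-1}$ by composition regardless of sign conventions.
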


\begin{proof}[Proof of Theorem \ref{T: duals}]
 Applying the definition of $\pf$ and Corollary \ref{C: duals}, we have 

\begin{align*}
\mf i_{|\xi|\cap |\eta|}[\xi\pf\eta]&=[\mf i_{|\xi|\cap |\eta|}\circ \D\circ \smile\circ (\D\otimes \D)^{-1} ]([\xi]\otimes[\eta])\\
&= [\D\circ \mf i^{|\xi|\cap |\eta|}\circ \smile\circ (\D\otimes \D)^{-1} ]([\xi]\otimes[\eta])\\
&=(-1)^n [\D\circ \mf i^{|\xi|\cap |\eta|}\circ \smile\circ (\D^{-1}\otimes \D^{-1})] ([\xi]\otimes[\eta])\\
&=(-1)^{n+ni}[\D\circ \mf i^{|\xi|\cap |\eta|}]( \D^{-1}([\xi])\smile  \D^{-1}([\eta]))\\
&=(-1)^{n+ni}\D ((\mf i^{|\xi|}\D^{-1}([\xi]))\smile (\mf i^{|\eta|}\D^{-1}([\eta])))\\
&=(-1)^{n+ni}\D (\D^{-1}(\mf i_{|\xi|}([\xi]))\smile  \D^{-1}(\mf i_{|\eta|}([\eta])))\\
&=(-1)^{n}[\D\circ\smile \circ (\D^{-1}\otimes \D^{-1}) \circ (\mf i_{|\xi|}\otimes \mf i_{|\eta|})]([\xi]\otimes[\eta])\\
&=[\D\circ\smile \circ (\D\otimes \D)^{-1} \circ (\mf i_{|\xi|}\otimes \mf i_{|\eta|})]([\xi]\otimes[\eta]).
\end{align*}

In the fifth line, we use naturality of the cup product coming from the maps $(M;\emptyset, \emptyset)\to (M; M-|\xi|,M-|\eta|)$. In particular,  $\mf i^{|\xi|\cap |\eta|}$ restricts on pairs to $\mf i^{|\xi|}$ and $\mf i^{|\eta|}$.  To explain the signs, we also recall that if $f,g$ are chain maps then the Koszul convention has $f\otimes g$ acting on an element $x\otimes y$ by $(f\otimes g)(x\otimes y)=(-1)^{|g||x|}f(x)\otimes g(y)$, where $|g|$ is the degree of $g$ as a chain map and $|x|$ is the degree of $x$ as a chain element. It also follows that 
\begin{align*}
\alpha\otimes \beta&=(\D^{-1}\D(\alpha))\otimes (\D^{-1}\D(\beta))\\
&=(-1)^{n(|\alpha|-n)}(\D^{-1}\otimes \D^{-1})((\D\alpha)\otimes (\D\beta))\\
&=(-1)^{n(|\alpha|-n)+n|\alpha|}(\D^{-1}\otimes \D^{-1})(\D\otimes \D)(\alpha\otimes \beta)\\
&=(-1)^n(\D^{-1}\otimes \D^{-1})(\D\otimes \D)(\alpha\otimes \beta),
\end{align*}
so $(\D\otimes \D)^{-1}=(-1)^n(\D^{-1}\otimes \D^{-1})$. 
\end{proof}

An alternative proof of Theorem 7.2 could likely be obtained from the naturality of the homology intersection product of Dold \cite[Section VIII.13]{Dold}. The composition down the right of Diagram \eqref{D: dual diagram} agrees with Dold's intersection product by \cite[Equation VIII.13.5]{Dold}, taking $(M,\emptyset)$ as both input pairs. By Proposition \ref{P: compact}, the Goresky-MacPherson intersection product on the left of Diagram \eqref{D: dual diagram} coincides with our product map $\mu$. 
The definition of $\mu$ is formally analogous to the definition of the homology intersection product in Dold, but they are not identical owing to the technicalities in the formulations of the various duality and transfer maps. These can probably be compared, but we will not pursue it here. 

Rather, we can turn around the observation that the composition down the right of Diagram \eqref{D: dual diagram} is Dold's homology intersection product to note that Corollary \ref{C: the point} demonstrates that the Goresky-MacPherson chain intersection product induces Dold's homology product. It then follows from Proposition \ref{P: compact} and Remark \ref{R: McC comp} that the homology products induced by our chain-level $\mu$ (equivalently McClure's $\mu_2$ of \cite{McC}) are the Dold intersection product up to signs. We state this as a further corollary:

\begin{corollary}\label{C: all agree}
Let $M$ be a compact oriented PL manifold. Then the following homology products $H_i(M)\otimes H_j(M)\to H_{i+j-n}(M)$ are equivalent (up to sign conventions):

\begin{enumerate}
\item the Goresky-MacPherson intersection product,
\item the homology product induced by our chain-level map $\mu$,

\item  the homology product induced by McClure's chain-level map $\mu_2$ \cite{McC},

\item  the Dold homology product \cite[Section VIII.13]{Dold}.
 \end{enumerate}
\end{corollary}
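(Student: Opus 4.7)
The plan is to observe that three of the four equivalences have essentially already been established in the paper, and only the identification with Dold's product requires a small additional argument, which is provided implicitly by the discussion preceding the corollary.

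First I would note that the equivalence of (1) and (2) is immediate from Proposition \ref{P: compact}: since $M$ is a compact oriented PL manifold, $\Sigma=\emptyset$, the perversity hypotheses are trivially satisfied (with $\bar p_1=\bar p_2=\bar r=\bar 0$), and the proposition asserts that for any pair of cycles in general position, $\mu(\xi\otimes\eta)=\xi\pf\eta$ up to sign. Passing to homology gives the equivalence of the induced products. Next, the equivalence of (2) and (3) is the content of Remark \ref{R: McC comp}, which notes that after the convention changes indicated in the introduction (degree shifts, number of tensor factors, and the choice of $\mf D$ versus $\D$) our $\mu$ coincides with McClure's $\mu_2$ on a compact oriented PL manifold; passing to homology again yields the stated equivalence.

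The only remaining equivalence is (1) $\equiv$ (4). Here I would invoke Corollary \ref{C: the point}, which states that the composition
\[
H^{n-i}(M)\otimes H^{n-j}(M)\xr{\smile} H^{2n-i-j}(M)\xr{\D} H_{i+j-n}(M)
\]
precomposed with $\D\otimes\D\colon H_i(M)\otimes H_j(M)\to H^{n-i}(M)\otimes H^{n-j}(M)$ equals the Goresky--MacPherson homology intersection product. But this same composition is, by \cite[Equation VIII.13.5]{Dold} applied with the pair $(M,\emptyset)$ as both inputs, precisely Dold's homology intersection product; Dold \emph{defines} his product at the homology level as the Poincar\'e dual of the cup product under this formula. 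Hence (1) and (4) agree on homology, up to the usual sign conventions governing the duality isomorphism $\D$.

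Combining these three equivalences completes the chain $(1)\equiv(2)\equiv(3)$ and $(1)\equiv(4)$, yielding that all four products coincide up to sign. The main obstacle, if there is one, is purely bookkeeping: verifying that the sign conventions in Dold's formula VIII.13.5, in the proof of Proposition \ref{P: compact}, and in Corollary \ref{C: the point} line up consistently. Since the corollary is stated only ``up to sign conventions,'' no explicit sign tracking is actually required, so I would simply assemble the three identifications and cite the corresponding results without further calculation.
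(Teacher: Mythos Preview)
Your proposal is correct and mirrors the paper's argument, which is precisely the paragraph preceding the corollary: Proposition~\ref{P: compact} and Remark~\ref{R: McC comp} for $(1)\Leftrightarrow(2)\Leftrightarrow(3)$, and Corollary~\ref{C: the point} together with \cite[Equation VIII.13.5]{Dold} for $(1)\Leftrightarrow(4)$. One cosmetic note: the map you wrote as $\D\otimes\D\colon H_i(M)\otimes H_j(M)\to H^{n-i}(M)\otimes H^{n-j}(M)$ should be $(\D\otimes\D)^{-1}$, since $\D$ runs from cohomology to homology; this does not affect the argument.
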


\subsection{Open questions: duality on pseudomanifolds without sheaves}\label{S: open}

The next reasonable question to ask is whether versions of 
Corollary \ref{C: duals} and Theorem \ref{T: duals} hold for intersection homology on PL stratified pseudomanifolds, utilizing the intersection homology Poincar\'e duality isomorphism and intersection homology cup and cap products of \cite{GBF25, GBF35}. In particular, if $X$ is a compact oriented PL stratified pseudomanifold and if $\xi\in I^{\bar p}C_i(X)$ is a cycle, then we have a version of $\mf i_{|\xi|}$ 
that runs $H_i(|\xi|)\to I^{\bar p}H_i(X)$ and takes the homology class representing $\xi$ in $H_i(|\xi|)$ to the intersection homology class represented by $\xi$ in $I^{\bar p}H_i(X)$. Letting $D\bar p$ be the dual perversity to $\bar p$ (see \cite[Definition 3.1.7]{GBF35}), one could then first seek a diagram

\begin{diagram}[LaTeXeqno]\label{D: IC bad natural}
H^{n-i}(X,X-|\xi|)&\rTo^{\mf i^{|\xi|}} &I_{D\bar p}H^{n-i}(X)\\
\dTo^\D&&\dTo^\D\\
H_i(|\xi|)&\rTo^{\mf i_{|\xi|}}&I^{\bar p}H_i(X),
\end{diagram}
analogous to the diagram of Corollary \ref{C: duals} but with the map $\D$ on the right now being the duality isomorphism\footnote{Technically, we should either work with field coefficients or with locally-torsion free spaces for the duality isomorphism $I_{D\bar p}H^{n-i}(X)\cong I^{\bar p}H_i(X)$ and the intersection cohomology cup product to be defined; see \cite[Chapter 8]{GBF35}. The reader can choose either option or safely ignore the issue for the point of this discussion.}
 of \cite{GBF25, GBF35}; as for the manifold case above, this isomorphism is simply the signed (intersection homology) cap product with the fundamental class. Then one could attempt to proceed on through the argument of Theorem \ref{T: duals} using these $\mf i$ maps. 

But there are  problems. First of all, the map $\D$ on the left here is not well defined because the hypotheses for Proposition \ref{P: GM dual} are not met, as the subspace in the pair $(|\xi|,\emptyset)$ does not contain  $\Sigma_X$. It fact, it is easy to construct examples where $H_i(|\xi|)$ and $H^{n-i}(X,X-|\xi|)$ are not even abstractly isomorphic. For example, let $X=S^n\vee S^n$ with some orientation on the spheres, and let $\xi$ be a fundamental class. Then $|\xi|=X$ and $H_n(X)\cong \Z\oplus \Z$, but $H^0(X,X-X)=H^0(X)\cong \Z$. Of course this does not immediately rule out the commutativity of some diagram of the form \eqref{D: IC bad natural}, but it does mean that we cannot define an intersection product by the composition \eqref{E: manifold pf}.

Indeed, this is not how Goresky and MacPherson define their intersection pairing for intersection chains. Instead, simplifying the construction of \cite[Section 2.1]{GM1} to cycles and using our duality map, their intersection product is determined instead by  the composition

\begin{align*}
H_i(|\xi|)\otimes H_j(|\eta|)&\to H_i(|\xi|\cup \Sigma, \Sigma)\otimes H_j(|\eta|\cup \Sigma, \Sigma)\\
&\xr{(\D\otimes \D)^{-1}} H^{n-i}(X-\Sigma, X-(|\xi|\cup\Sigma))\otimes H^{n-j}(X-\Sigma, X-(|\eta|\cap \Sigma))\\
&\xr{\smile}H^{2n-i-j}(X-\Sigma,X-((|\xi|\cap|\eta|)\cup\Sigma))\\
&\xr{\D} H_{i+j-n}((|\xi|\cap|\eta|)\cup\Sigma, \Sigma)\\
&\xleftarrow{\cong} H_{i+j-n}(|\xi|\cap|\eta|,|\xi|\cap|\eta|\cap \Sigma) \qquad\text{by excision.}
\end{align*}
With the assumptions on the perversities in \cite{GM1}, this last group is isomorphic to $H_{i+j-n}(|\xi|\cap|\eta|)$, though we also know from our work here (see Propositions \ref{P: G} and \ref{P: compact}) that an element of $H_{i+j-n}((|\xi|\cap|\eta|)\cup\Sigma, \Sigma)$ itself determines an intersection chain if $\xi$ and $\eta$ are intersection chains in stratified general position. Thus it seems that our replacement for the diagram of Corollary \ref{C: duals} might instead be something like

\begin{diagram}[LaTeXeqno]\label{D: IC bad natural2}
H^{n-i}(X-\Sigma,X-(|\xi|\cup\Sigma))&\rTo^{\mf i^{|\xi|}} &I_{D\bar p}H^{n-i}(X)\notag\\
\dTo^\D_\cong\\
H_i(|\xi|\cup\Sigma, \Sigma)&&\dTo^\D\\
\uTo\\
H_i(|\xi|)&\rTo^{\mf i_{|\xi|}}&I^{\bar p}H_i(X).
\end{diagram}
As both $\D$ maps are now isomorphisms again, we can still talk sensibly about commutativity of this diagram.
And, in fact, if $\bar p(Z)\leq \codim(Z)-2$ for all singular strata $Z$, as is the case for all perversities  in \cite{GM1}, then the map $H_i(|\xi|) \to H_i(|\xi|\cup\Sigma, \Sigma)$ is also an isomorphism: It is equal to the composition $$H_i(|\xi|) \to H_i(|\xi|,|\xi|\cap \Sigma) \to H_i(|\xi|\cup\Sigma, \Sigma),$$
the second map being an excision isomorphism and the first being an isomorphism from the long exact sequence of the pair,  as the perversity condition ensures $H_i(|\xi|\cap \Sigma)=H_{i-1}(|\xi|\cap \Sigma)=0$.

But now there is a different  problem: what is the natural definition of the map 
$\mf i^{|\xi|}: H^{n-i}(X-\Sigma,X-(|\xi|\cup\Sigma))\to I_{D\bar p}H^{n-i}(X)$?
There is no obvious chain map  $I^{D\bar p}C_*(X)\to C_*(X-\Sigma,X-(|\xi|\cup\Sigma))$ that would induce this cohomology map. We could conceivably \emph{define} $\mf i^{|\xi|}$ via diagram \eqref{D: IC bad natural2} in the case where all vertical maps are isomorphisms, but that's not good enough for a version of Theorem \ref{T: duals} as it is not clear how to show that a collection of $\mf i$ maps defined this way is natural with respect to the cup product without already having some kind of compatibility between cup products and appropriate homology pairings.

So we are stuck. There does not seem to be in the spirit of Theorem \ref{T: duals} and Corollary \ref{C: the point}
a similar proof of the duality isomorphism for pseudomanifolds between the Goresky-MacPherson intersection product and the intersection cohomology cup product  of \cite{GBF25}.  
However, such an isomorphism \emph{is} shown in \cite{GBF30} using sheaf-theoretic techniques, including the sheafification of the noncompact chain-level intersection product we have just developed, justifying our work here.

\bibliographystyle{amsplain}
\bibliography{../../bib}

Several diagrams in this paper were typeset using the \TeX\, commutative
diagrams package by Paul Taylor.

\end{document}